\documentclass{amsart}

\usepackage{enumerate}

\usepackage{stmaryrd}

\usepackage{amssymb}

\usepackage{mathrsfs}

\usepackage{cite}

\usepackage{scalerel,xcolor}
\def\darrow{\mathrel{\ThisStyle{\ooalign{$\SavedStyle\rightarrow$\cr%
  \hfil\textcolor{white}{\rule{2\LMpt}{1\LMex}}\kern2\LMpt\hfil}}}}

\newcommand{\R}{\mathbf{R}}
\newcommand{\Z}{\mathbf{Z}}
\newcommand{\C}{\mathbf{C}}

\newcommand{\bA}{\mathbf{A}}
\newcommand{\bP}{\mathbf{P}}

\newcommand{\cC}{\mathcal{C}}
\newcommand{\cS}{\mathcal{S}}

\newcommand{\cO}{\mathcal{O}}

\newcommand{\cI}{\mathcal{I}}

\newcommand{\cL}{\mathcal{L}}
\newcommand{\cE}{\mathcal{E}}

\newcommand{\sL}{\mathscr{L}}

\newcommand{\sM}{\mathscr{M}}

\newcommand{\bL}{\mathbf{L}}

\newcommand{\bG}{\mathbf{G}}

\newcommand{\bx}{\mathbf{x}}
\newcommand{\bn}{\mathbf{n}}

\newcommand\HH{\mathrm{H}}

\newcommand{\an}{\mathrm{an}}

\newcommand{\dmu}{\mathrm{d}\mu}

\DeclareMathOperator{\pos}{pos}

\DeclareMathOperator{\val}{val}

\DeclareMathOperator{\ord}{ord}

\DeclareMathOperator{\ordjac}{ordjac}

\DeclareMathOperator{\divr}{div}

\DeclareMathOperator{\Star}{Star}

\DeclareMathOperator{\Faces}{Faces}

\DeclareMathOperator{\Pic}{Pic}

\DeclareMathOperator{\Spec}{Spec}

\DeclareMathOperator{\Trop}{Trop}

\DeclareMathOperator{\trop}{trop}
\DeclareMathOperator{\gtrop}{gtrop}

\newtheorem{theorem}{Theorem}[section]
\newtheorem{proposition}[theorem]{Proposition}
\newtheorem{lemma}[theorem]{Lemma}
\newtheorem{corollary}[theorem]{Corollary}
\theoremstyle{definition}
\newtheorem{definition}[theorem]{Definition}
\newtheorem{remark}[theorem]{Remark}
\newtheorem{example}[theorem]{Example}
\newtheorem*{acknowledgements}{Acknowledgements}

\title{Motivic Volumes of Fibers of Tropicalization}
\author{Jeremy Usatine}
\address{Department of Mathematics, Yale University}
\email{jeremy.usatine@yale.edu}

\begin{document}
\maketitle

\begin{abstract}
Let $T$ be an algebraic torus over an algebraically closed field, let $X$ be a smooth closed subvariety of a $T$-toric variety such that $U = X \cap T$ is not empty, and let $\mathscr{L}(X)$ be the arc scheme of $X$. We define a tropicalization map on $\mathscr{L}(X) \setminus \mathscr{L}(X \setminus U)$, the set of arcs of $X$ that do not factor through $X \setminus U$. We show that each fiber of this tropicalization map is a constructible subset of $\mathscr{L}(X)$ and therefore has a motivic volume. We prove that if $U$ has a compactification with simple normal crossing boundary, then the generating function for these motivic volumes is rational, and we express this rational function in terms of certain lattice maps constructed in Hacking, Keel, and Tevelev's theory of geometric tropicalization. We explain how this result, in particular, gives a formula for Denef and Loeser's motivic zeta function of a polynomial. To further understand this formula, we also determine precisely which lattice maps arise in the construction of geometric tropicalization.
\end{abstract}

\section{Introduction and Statements of Main Results}

Let $k$ be an algebraically closed field, let $T$ be an algebraic torus over $k$ with character lattice $M$, and let $X$ be a smooth closed subvariety of a $T$-toric variety such that $U = X \cap T \neq \emptyset$. Let $\sL(X)$ be the arc scheme of $X$. In this paper, we define a tropicalization map 
\[
	\trop: \sL(X) \setminus \sL(X \setminus U) \to M^\vee
\]
on the subset of arcs that do not factor through $X \setminus U$. We show that the fibers of this tropicalization map are constructible subsets of $\sL(X)$, and thus in the sense of Kontsevich's theory of motivic integration \cite{Kontsevich}, each of these fibers $\trop^{-1}(w)$ has a well defined motivic volume $\mu_X(\trop^{-1}(w))$. 

We then prove a formula for these motivic volumes, Theorems \ref{constructibilityformula} and \ref{compatiblemodification}, expressing the multivariable generating function 
\[
	\sum_{w \in M^\vee}\mu_X(\trop^{-1}(w)) \bx^w
\] 
as a rational function in terms of the theory of geometric tropicalization, as introduced by Hacking, Keel, and Tevelev in \cite{HackingKeelTevelev}. With the motivation of further understanding this formula for these motivic volumes, we also study certain properties of geometric tropicalization, proving Theorems \ref{gtropproper} and \ref{immersivegtrop}, which we hope will also be of independent interest in tropical geometry.

Before stating the main results of this paper, we present the following family of examples, which demonstrate that these fibers of tropicalization can be used to compute the motivic zeta function of a polynomial, an invariant introduced by Denef and Loeser in \cite{DenefLoeser}. 

\begin{example}
\label{motivatingexample}
Let $f \in k[x_1, \dots, x_n]$ be a nonzero polynomial. Then by definition, the motivic zeta function of $f$ is the series
\[
	Z_f(s) = \int_{\sL(\bA^n)} s^{\ord_f}\dmu_{\bA^n} = \sum_{\ell =0}^\infty \mu_{\bA^n}(\ord_f^{-1}(\ell))s^\ell \in \sM_{\bA^n} \llbracket s \rrbracket,
\]
where $\sM_{\bA^n}$ is the ring obtained by inverting the class of $\bA^1_{\bA^n}$ in the Grothendieck ring of $\bA^n$-varieties. Set $T = \bG_m^{n+1}$, consider $\bA^{n+1}$ as a $T$-toric variety, set $X \hookrightarrow \bA^{n+1}$ to be the graph of the function $f: \bA^{n} \to \bA^1$, and set $U = X \cap T$. Give $\sM_X$ the topology induced by the dimension filtration. We will see by Theorem \ref{constructibilityformula} that the generating function
\[
	\sum_{w  \in \Z^{n+1}}\mu_X(\trop^{-1}(w)) \bx^w = \sum_{w  \in \Z_{\geq 0}^{n+1}}\mu_X(\trop^{-1}(w)) \bx^w \in \sM_X \llbracket \bx \rrbracket
\] 
is in fact an element of the subring $\sM_X \langle \bx \rangle \subset \sM_X \llbracket \bx \rrbracket$ consisting of series whose coefficients converge to 0 as $w \to \infty$. Thus there is a well defined map
\[
	\varphi: \sM_X \langle \bx \rangle \to \widehat{\sM}_X \llbracket s \rrbracket : \bx^{(w_1, \dots, w_{n+1})} \mapsto s^{w_{n+1}},
\]
where $\widehat{\sM}_X$ is the completion of $\sM_X$. Then because $\sL(X \setminus U)$ is a negligible subset of $\sL(X)$, the image of $Z_f(s)$ in $\widehat{\sM}_X\llbracket s \rrbracket$ is equal to
\[
	\varphi\left( \sum_{w  \in \Z^{n+1}}\mu_X(\trop^{-1}(w)) \bx^w \right).
\]
We will see in Example \ref{motivatingexample2} that, in particular, our main results can be used to give a complete list of candidate poles for $Z_f(s)$ in terms of geometric tropicalization.
\end{example}

A major open problem in this subject is the motivic monodromy conjecture. Introduced by Denef and Loeser in \cite{DenefLoeser}, the motivic monodromy conjecture predicts that, when $k = \C$, the roots of the Bernstein polynomial of $f$ give a complete list of candidate poles for the zeta function $Z_f(s)$. Therefore, there has been much interest in developing methods for computing candidate poles for $Z_f(s)$. One setting in which toric methods have proven fruitful for understanding these zeta functions has been in the case where $f$ is non-degenerate with respect to its Newton polyhedron. In this non-degenerate setting, Denef and Hoornaert gave a formula for the $p$-adic zeta function \cite{DenefHoornaert}, and Bories and Veys \cite{BoriesVeys} and Guibert \cite{Guibert} gave motivic versions of this formula. Bultot and Nicaise also reproved Guibert's formula by proving a formula, in terms of log-smooth models, for a related motivic zeta function \cite{BultotNicaise}. Nicaise, Payne, and Schroeter have also studied, in \cite{NicaisePayne} and \cite{NicaisePayneSchroeter}, connections between tropical geometry and Hrushovski and Kazhdan's theory of motivic integration of semi-algebraic sets.

\subsection{Statements of Main Results}

Throughout this paper, $k$ will be an algebraically closed field, and by variety, we will mean an integral scheme that is finite type and separated over $k$. All toroidal embeddings in this paper will have normal boundary components. Using the terminology in \cite{KKMS}, these are the toroidal embeddings without self-intersection. 

For any finite type $k$-scheme $X$, we let $\sL(X)$ denote the arc scheme of $X$, we let $K_0(\mathbf{Var}_X)$ denote the Grothendieck ring of finite type $X$-schemes, we let $\sM_X$ denote the ring obtained from $K_0(\mathbf{Var}_X)$ by inverting the class of $\bA^1_X$, we let $\bL$ denote the class of $\bA^1_X$ in $\sM_X$, for any finite type $X$-scheme $Y$ we let $[Y]$ denote the class of $Y$ in $\sM_X$, and we endow $\sM_X$ with the topology given by the dimension filtration. If $X$ is a smooth variety, we let $\mu_X$ be the motivic measure that assigns a volume in $\sM_X$ to each constructible subset of $\sL(X)$.

\begin{definition}
Let $T$ be an algebraic torus over $k$ with character lattice $M$ and let $X$ be a smooth closed subvariety of a $T$-toric variety such that $U = X \cap T \neq \emptyset$. Let $x \in \sL(X) \setminus \sL(X \setminus U)$ be a point with residue field $k'$. Then $x$ is an arc $\Spec( k'\llbracket t \rrbracket ) \to X$ whose generic point $\eta: \Spec(k'(\!(t)\!)) \to X$ factors through $U$. Then let $\trop(x) \in M^\vee$ be defined so that for any $u \in M$,
\[
	\langle u, \trop(x) \rangle = \val(\chi^u|_U(\eta)),
\]
where $\chi^u$ is the character on $T$ corresponding to $u$ and $\val: k'(\!( t )\!) \to \Z$ is the valuation given by order of vanishing of $t$. This defines a \emph{tropicalization map}
\[
	\trop: \sL(X) \setminus \sL(X \setminus U) \to M^\vee.
\]
\end{definition}

Before stating our main results, we need to define a technical condition that can be satisfied by a simple normal crossing compactification of a very affine variety.

\begin{definition}
\label{has_gtrop}
Let $T$ be an algebraic torus with character lattice $M$, let $N = M^\vee$, let $U \hookrightarrow T$ be a smooth closed subvariety, let $U \subset X$ be an open immersion into a smooth complete variety $X$ such that $X \setminus U$ is a simple normal crossing divisor, and let $\cC(X \setminus U)$ be the set of irreducible components of $X\setminus U$. For each $\cS \subset \cC(X \setminus U)$, let $\varphi^{\cS}: \Z^{\cS} \to N$ be the map of lattices such that for each $D \in \cS$, the standard basis vector of $\Z^{\cS}$ associated to $D$ is sent to $\val_D|_M \in N$.

\begin{enumerate}[(a)]

\item We say that $U \subset X$ \emph{has immersive geometric tropicalization with respect to $U \hookrightarrow T$} if for each $\cS \subset \cC(X\setminus U)$ such that $\bigcap_{D \in \cS}D \neq \emptyset$, the map $\varphi^{\cS}_\R: \R^\cS \to N_\R$ is injective.

\item Let $\sigma$ be a cone in $N$. Then we say that $U \subset X$ \emph{has $\sigma$-compatible geometric tropicalization with respect to $U \hookrightarrow T$} if for each $\cS \subset \cC(X \setminus U)$ such that $\bigcap_{D \in \cS}D \neq \emptyset$, the cone $(\varphi^\cS_\R)^{-1}(\sigma) \cap \R_{\geq 0}^\cS$ is a face of $\R_{\geq 0}^\cS$.

\item Let $\Delta$ be a fan in $N$. Then we say that $U \subset X$ \emph{has $\Delta$-compatible geometric tropicalization with respect to $U \hookrightarrow T$} if $U \subset X$ has $\sigma$-compatible geometric tropicalization with respect to $U \hookrightarrow T$ for each $\sigma \in \Delta$.

\end{enumerate}
\end{definition}

We will see that when $U \subset X$ has $\Delta$-compatible geometric tropicalization with respect to $U \hookrightarrow T$, the closed immersion $U \hookrightarrow T$ can be extended to a proper map from an open subset of $X$ to the $T$-toric variety defined by $\Delta$. We will use this proper map and the motivic change of variables formula to prove Theorem \ref{constructibilityformula} below which, in particular, gives a formula for the motivic volumes of fibers of tropicalization. As we will describe below, if in addition, $U \subset X$ has immersive geometric tropicalization with respect to $U \hookrightarrow T$, then the combinatorics of this formula will take a simpler form.

\begin{remark}
In the above definition and elsewhere in this paper, we let $\val_D|_M$ denote the map $M \to \Z$ given by pulling back characters to the rational function field of $U$ and then applying the valuation associated to the divisor $D$.
\end{remark}

\begin{remark}
Throughout this paper, by a cone (resp. fan) in a lattice $N$, we mean a rational polyhedral cone (resp. fan) in $N_\R$, and unless explicitly stated otherwise, we always assume cones in $N$ are pointed and that fans in $N$ consist of pointed cones.
\end{remark}

We now state a result, which in particular, gives a formula for motivic volumes of fibers of tropicalization in terms of geometric tropicalization.

\begin{theorem}
\label{constructibilityformula}
Let $T$ be an algebraic torus with character lattice $M$, let $N = M^\vee$, let $\Delta$ be a fan in $N$, let $X$ be a smooth closed subvariety of the $T$-toric variety defined by $\Delta$, and assume $U = X \cap T$ is nonempty. Let $U \subset \widetilde{X}$ be an open immersion into a smooth complete variety $\widetilde{X}$ such that $\widetilde{X} \setminus U$ is a simple normal crossing divisor and such that $U \subset \widetilde{X}$ has $\Delta$-compatible geometric tropicalization with respect to $U \hookrightarrow T$. Let $\cC(\widetilde{X} \setminus U)$ be the set of irreducible components of $\widetilde{X} \setminus U$.

Then there exists an assignment of nonnegative integers $m_D$ to each $D \in \cC(\widetilde{X} \setminus U)$ and an assignment of finite type $X$-schemes $Y_\cS$ to each $\cS \subset \cC(\widetilde{X} \setminus U)$ such that the following holds.

\begin{enumerate}[(a)]

\item For each $\cS \subset \cC(\widetilde{X} \setminus U)$ such that $\bigcap_{D \in \cS} D \neq \emptyset$, the cone 
\[
	\pos(\val_D|_M \, | \, D \in \cS)
\]
is a rational pointed cone in $N_\R$. Furthermore, the rational function
\[
	\prod_{D \in \cS} \frac{ \bL^{-(m_D+1)}\bx^{\val_D|_M}}{1 - \bL^{-(m_D+1)}\bx^{\val_D|_M} }
\]
is a well defined element of the ring $\sM_X\langle \pos(\val_D|_M \, | \, D \in \cS) \cap N \rangle$.

\item For each $\cS \subset \cC(\widetilde{X} \setminus U)$ such that there exists $D \in \cS$ with $\val_D|_M \notin |\Delta|$,
\[
	Y_\cS = \emptyset.
\]

\item For each $w \in N$, the set $\trop^{-1}(w)$ is a constructible subset of $\sL(X)$, and
\[
	\mu_X(\trop^{-1}(w)) = \bL^{-\dim X} \sum_{\substack{\cS \subset \cC(\widetilde{X} \setminus U) \\ \bigcap_{D \in \cS} D \neq \emptyset}} (\bL - 1)^{|\cS|} [Y_\cS] F_\cS(w) \in \sM_X,
\]
where the $F_\cS(w) \in \sM_X$ are such that
\[
	\sum_{w \in N} F_\cS(w) \bx^w =  \prod_{D \in \cS} \frac{ \bL^{-(m_D+1)}\bx^{\val_D|_M}}{1 - \bL^{-(m_D+1)}\bx^{\val_D|_M} },
\]
i.e. the $F_\cS(w)$ are the coefficients of the power series expansion of the rational function in part (a).

\end{enumerate}
\end{theorem}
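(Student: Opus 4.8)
\emph{Approach.} The plan is to use the $\Delta$-compatible geometric tropicalization to replace $X$ by a proper birational model dominated by the chosen compactification $\widetilde X$, and then to run the standard stratified computation in motivic integration attached to a simple normal crossing divisor. All of the combinatorial data in the statement will be read off this model.

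\emph{Step 1 (a proper birational model inside $\widetilde X$).} As announced just before the statement, the $\Delta$-compatibility hypothesis produces an open subscheme $U\subseteq X^\circ\subseteq\widetilde X$ — the union of $U$ with those strata $Z^\circ_\cS$ of $\widetilde X$ for which $\varphi^\cS_\R(\R^\cS_{\geq 0})\subseteq|\Delta|$ — together with a proper morphism $g\colon X^\circ\to T_\Delta$ extending $U\hookrightarrow T$, where $T_\Delta$ denotes the $T$-toric variety defined by $\Delta$. Since $X\hookrightarrow T_\Delta$ is a closed immersion, the fiber product $X\times_{T_\Delta}X^\circ$ is a closed subscheme of $X^\circ$ containing the dense open $U$, hence equals $X^\circ$; the first projection then gives a proper morphism $h\colon X^\circ\to X$ with $g=(X\hookrightarrow T_\Delta)\circ h$, with $h^{-1}(U)=U$, and with $h|_U=\mathrm{id}_U$. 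Thus $h$ is proper and birational between the smooth varieties $X^\circ$ and $X$ of the same dimension, so the motivic change of variables formula applies to it; its relative canonical divisor $K_{X^\circ/X}=\sum_D m_D D$ has nonnegative integer coefficients $m_D$ (because $X$ is smooth) and is supported on $\widetilde X\setminus U$, so $\ordjac_h=\sum_D m_D\ord_D$ on arcs whose generic point lies in $U$. These $m_D$ are the integers in the statement.

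\emph{Step 2 (pointedness and part (a)).} Fix $\cS$ with $\bigcap_{D\in\cS}D\neq\emptyset$. The cone $\pos(\val_D|_M\mid D\in\cS)$ equals $\varphi^\cS_\R(\R^\cS_{\geq 0})$, hence is rational; I claim it is pointed. If it were not, there would be a nonzero $(a_D)\in\Z^\cS_{\geq 0}$ with $\sum_{D\in\cS}a_D\val_D|_M=0$; writing $\cS'$ for its support, I would choose a point $p$ of the nonempty stratum $Z^\circ_{\cS'}\subseteq\widetilde X\setminus U$ and, in local coordinates at $p$, an arc $\gamma$ with closed point $p$ and $\ord_D\gamma=a_D$ for each $D\in\cS'$. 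Its generic point lies in $U$, and comparing with the divisors of characters (as in Step 3) gives $\trop(\gamma)=\sum_{D\in\cS'}a_D\val_D|_M=0$. Then every character is a unit along $\gamma$, so the generic point of $\gamma$ extends to a $k'\llbracket t\rrbracket$-point of $T$, which factors through the closed subscheme $U$; by separatedness of $\widetilde X$ this extension agrees with $\gamma$, forcing $p\in U$ — a contradiction. (This is where the hypothesis that $U$ is closed in $T$ is used; the same argument also shows $\val_D|_M\neq 0$ for every boundary divisor.) Pointedness makes $\varphi^\cS$ have finite fibers on $\Z^\cS_{\geq 0}$, so expanding
\[
	\prod_{D\in\cS}\frac{\bL^{-(m_D+1)}\bx^{\val_D|_M}}{1-\bL^{-(m_D+1)}\bx^{\val_D|_M}}=\sum_{(a_D)\in\Z^\cS_{>0}}\bL^{-\sum_{D\in\cS}(m_D+1)a_D}\,\bx^{\sum_{D\in\cS}a_D\val_D|_M}
\]
one sees that the support lies in $\pos(\val_D|_M\mid D\in\cS)\cap N$ and the coefficients tend to $0$; hence this series is a well defined element of $\sM_X\langle\pos(\val_D|_M\mid D\in\cS)\cap N\rangle$. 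Taking $F_\cS(w)$ to be its coefficients gives part (a) and the last identity of part (c).

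\emph{Step 3 (arc stratification, constructibility, and the volume formula).} The valuative criterion of properness for $h$, together with $h|_U=\mathrm{id}_U$, gives a $\trop$-compatible bijection between $\sL(X)\setminus\sL(X\setminus U)$ and $\sL(X^\circ)\setminus\sL(X^\circ\setminus U)$, so the preimage of $\trop^{-1}(w)$ under $h$ is the set of arcs on $X^\circ$ with generic point in $U$ and tropicalization $w$. Such an arc $\beta$ has closed point in a unique stratum $Z^\circ_\cS\subseteq X^\circ$ with $\bigcap_{D\in\cS}D\neq\emptyset$, with contact orders $a_D:=\ord_D\beta\in\Z_{>0}$ for $D\in\cS$; writing each character near a point of $Z^\circ_\cS$ as a unit times a monomial in local equations of the $D\in\cS$ yields $\trop(\beta)=\sum_{D\in\cS}a_D\val_D|_M$ and $\ordjac_h(\beta)=\sum_{D\in\cS}m_D a_D$. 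Hence this preimage is the disjoint union, over such $\cS$ and over $(a_D)\in\Z^\cS_{>0}$ with $\sum_{D\in\cS}a_D\val_D|_M=w$, of the cylinders $C_\cS\big((a_D)\big)=\{\beta : \text{closed point in }Z^\circ_\cS,\ \ord_D\beta=a_D\ \forall D\in\cS\}$; this union is finite by the pointedness of Step 2, so $\trop^{-1}(w)$ is constructible in $\sL(X)$. I would set $Y_\cS$ equal to $Z^\circ_\cS$ viewed as an $X$-scheme via $h$ when $Z^\circ_\cS\subseteq X^\circ$, and $Y_\cS=\emptyset$ otherwise; since $\varphi^\cS_\R(\R^\cS_{\geq 0})\subseteq|\Delta|$ forces $\val_D|_M\in|\Delta|$ for every $D\in\cS$, this gives part (b). Finally, the standard formula for motivic volumes attached to a simple normal crossing divisor gives $\mu_{X^\circ}\big(C_\cS((a_D))\big)=[Z^\circ_\cS]\,\bL^{-\dim X}(\bL-1)^{|\cS|}\bL^{-\sum_{D\in\cS}a_D}$, so the change of variables formula for $h$ (carrying $X^\circ$-schemes to $X$-schemes along $h$) yields
\[
	\mu_X(\trop^{-1}(w))=\!\!\!\sum_{\substack{\cS,\ (a_D)\in\Z^\cS_{>0}\\ \sum_{D\in\cS}a_D\val_D|_M=w}}\!\!\!\bL^{-\sum_{D\in\cS}(m_D+1)a_D}(\bL-1)^{|\cS|}[Y_\cS]\,\bL^{-\dim X}=\bL^{-\dim X}\!\!\sum_{\substack{\cS\subseteq\cC(\widetilde X\setminus U)\\ \bigcap_{D\in\cS}D\neq\emptyset}}\!\!(\bL-1)^{|\cS|}[Y_\cS]\,F_\cS(w),
\]
since the inner sum over $(a_D)$ is exactly $F_\cS(w)$. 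This is part (c).

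\emph{Main obstacle.} The delicate points are Step 1 — packaging the combinatorial $\Delta$-compatibility into the morphism $h$ with all of its stated properties via the toroidal–toric dictionary, and verifying that the change of variables formula genuinely applies — and the pointedness claim in Step 2, which is short but is the one place where the hypothesis that $U$ is closed in $T$ is used essentially. The arc stratification and the simple normal crossing volume computation in Step 3 are then routine.
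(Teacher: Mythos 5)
Your overall strategy is the paper's: your $X^\circ$ is the paper's $\widetilde{X}_\Delta$, your $h$ is the paper's proper extension $\widetilde{X}_\Delta \to X$, your $m_D$ and $Y_\cS$ are the same data, and Steps 2--3 reproduce the stratified change-of-variables computation of Section \ref{constructibilityproof}. Two sub-arguments differ in substance. For pointedness in part (a) the paper invokes Theorem \ref{gtropproper}(\ref{gtropisproper}), proved in Section \ref{gtroptoroidalsection} by passing to Berkovich analytifications (properness of $U\hookrightarrow T$ implies properness of $\gtrop$, hence of $\varphi^\cS_\R|_{\R_{\geq 0}^\cS}$), whereas you give a direct arc-theoretic argument: a nonzero relation $\sum a_D\val_D|_M=0$ would produce an arc with special point in the boundary whose generic point extends to a $k'\llbracket t\rrbracket$-point of $T$ landing in the closed subvariety $U$, contradicting separatedness of $\widetilde{X}$. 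That argument is correct, self-contained, and more elementary than the paper's; it yields exactly the "no nonzero nonnegative relation" statement that gives both pointedness and finiteness of the fibers of $\varphi^\cS$ on $\Z^\cS_{>0}$. On the other hand, you black-box the existence and properness of $g\colon X^\circ\to T_\Delta$ ("as announced"); in the paper this is Proposition \ref{gtropindeterminacy}, whose properness claim again goes through Berkovich spaces and the valuative criterion and is where the $\Delta$-compatibility hypothesis actually does its work, so it is real content rather than bookkeeping.

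The one genuine gap is the constructibility of $\trop^{-1}(w)$ as a subset of $\sL(X)$. You deduce it from the fact that its preimage in $\sL(X^\circ)$ is a finite disjoint union of cylinders, but the change-of-variables formula as stated (and as used in the paper) takes constructibility of the downstream set $C$ as a \emph{hypothesis}, so you cannot obtain it as output; pushing constructibility forward along $\sL(h)$ requires the nontrivial fact that the image of a cylinder with bounded Jacobian order under a proper birational morphism is again a cylinder, which you neither prove nor cite. The paper closes this by a direct argument on $X$ itself: for $w\in\sigma\in\Delta$ one has $\trop^{-1}(w)\subset\sL(X\cap X(\sigma))$, and choosing a finite set $S$ of semigroup generators of $\sigma^\vee\cap M$ one writes $\trop^{-1}(w)$ as a finite intersection, over $u\in S$, of differences of pullbacks of the jet-scheme conditions $\sL_{\langle u,w\rangle-1}(V(\chi^u))$ and $\sL_{\langle u,w\rangle}(V(\chi^u))$. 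You should either supply such a direct argument or explicitly invoke the image-of-cylinders lemma; with that repaired, the rest of your proof goes through.
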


\begin{remark}
We clarify the notation $\sM_X\langle \pos(\val_D|_M \, | \, D \in \cS) \cap N \rangle$ used above. If $\sigma$ is a rational pointed cone in $N_\R$, then for any ring $A$, there is a well defined power series ring
\[
	A\llbracket \sigma \cap N \rrbracket = \left\{ \sum_{w \in \sigma \cap N} a_w \bx^w  \right\}.
\]
If $A$ has a topology, we let $A \langle \sigma \cap N \rangle$ denote the subring of $A \llbracket \sigma \cap N \rrbracket$ consisting of series whose coefficients $a_w$ converge to 0 as $w \to \infty$.
\end{remark}

\begin{remark}
See Theorem \ref{constructibility} in Section \ref{constructibilityproof} for a more explicit description of the integers $m_D$ and the schemes $Y_\cS$ that appear in the statement of Theorem \ref{constructibilityformula}.
\end{remark}

In the proof of Theorem \ref{constructibility}(\ref{constructibilityrational}), we will give another combinatorial description for the $F_\cS(w)$ that appear in the statement of Theorem \ref{constructibilityformula}. This combinatorial description will be in terms of a sum over points of $\Z_{> 0}^\cS$ that map to $w$. In particular, when $U \subset \widetilde{X}$ has immersive geometric tropicalization with respect to $U \hookrightarrow T$, there will be at most one term in this sum.

Our next result shows that any simple normal crossing compactification can be modified to obtain one that satisfies the compatibility condition in the hypotheses of Theorem \ref{constructibilityformula}. In particular, when $k$ has characteristic 0, we always have a formula of the above form for the volumes of fibers of tropicalization.

\begin{theorem}
\label{compatiblemodification}
Let $T$ be an algebraic torus with co-character lattice $N$, let $\Delta$ be a fan in $N$, and let $U \hookrightarrow T$ be a smooth closed subvariety.

\begin{enumerate}[(a)]

\item \label{compatiblemodificationmain} If $U \subset X$ is an open immersion into a smooth complete variety $X$ such that $X \setminus U$ is a simple normal crossing divisor, then there exists a toroidal modification $(U \subset \widetilde{X})\to (U \subset X)$ such that $\widetilde{X}$ is smooth and complete and such that $U \subset \widetilde{X}$ has $\Delta$-compatible geometric tropicalization with respect to $U \hookrightarrow T$. Furthermore, if $U \subset X$ has immersive geometric tropicalization with respect to $U \hookrightarrow T$, then so does $U \subset \widetilde{X}$.

\item If the characteristic of $k$ is 0, there exists an open immersion $U \subset \widetilde{X}$ into a smooth complete variety $\widetilde{X}$ such that $\widetilde{X} \setminus U$ is a simple normal crossing divisor and such that $U \subset \widetilde{X}$ has $\Delta$-compatible geometric tropicalization with respect to $U \hookrightarrow T$.

\end{enumerate}
\end{theorem}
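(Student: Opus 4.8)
The plan is to reduce part (b) to part (a) via Hironaka resolution, and to prove part (a) by a combinatorial induction on the dual complex of the boundary, successively blowing up boundary strata until all the cones $(\varphi^{\cS}_{\R})^{-1}(\sigma) \cap \R_{\geq 0}^{\cS}$ become faces of $\R_{\geq 0}^{\cS}$. For part (b): if $\operatorname{char} k = 0$, start with any compactification of $U$, resolve singularities and make the boundary simple normal crossing, then apply part (a). So the content is entirely in part (a).

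For part (a), the first step is to translate the problem into the language of toroidal embeddings. The simple normal crossing pair $U \subset X$ gives a toroidal embedding without self-intersection, with an associated conical polyhedral complex whose cones are the $\R_{\geq 0}^{\cS}$ for strata $\bigcap_{D \in \cS} D \neq \emptyset$, glued along faces. The collection of maps $\varphi^{\cS}: \Z^{\cS} \to N$ is compatible with face inclusions, so it assembles into a map of polyhedral complexes $\Phi$ from this complex to $N_{\R}$ (this is the geometric tropicalization map; one should cite that this is well defined, perhaps referencing the later Theorems \ref{gtropproper}/\ref{immersivegtrop} or \cite{HackingKeelTevelev}). The $\Delta$-compatibility condition says exactly that for every $\sigma \in \Delta$ and every cone $\tau = \R_{\geq 0}^{\cS}$ of the complex, $\Phi^{-1}(\sigma) \cap \tau$ is a face of $\tau$. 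Now invoke the standard fact (from \cite{KKMS}, the theory of subdivisions of conical polyhedral complexes, combined with the dictionary between subdivisions and toroidal modifications — in the smooth SNC case these are realized by sequences of blowups along closed strata) that one can find a subdivision of the complex, refining the given one, such that the preimage of each cone of $\Delta$ is a subcomplex. The key refinement to use: take the coarsest common refinement of the given complex with the pullbacks $\Phi^{-1}(\sigma)$ for $\sigma \in \Delta$; since $\Delta$ is a finite fan and each $\Phi$ restricted to a cone is linear, $\Phi^{-1}(\sigma)$ is a finite union of rational polyhedral cones, and the common refinement is again a finite rational conical polyhedral complex.

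The third step is to make this subdivision nonsingular — i.e. to further subdivide so that every cone is generated by part of a lattice basis — which can always be done by \cite{KKMS} and which corresponds to the resulting toroidal embedding $U \subset \widetilde{X}$ being \emph{smooth} with SNC boundary; and then to check that the modification $(U \subset \widetilde{X}) \to (U \subset X)$ is proper, with $\widetilde{X}$ complete because the original complex had full support (every maximal cone is unbounded in the appropriate sense) and subdivisions of complete data stay complete. One then verifies that for the new pair, $\varphi^{\cS}_{\R}$ on each new cone is the restriction of $\Phi$ to a cone of the refinement, and by construction of the refinement, $(\varphi^{\cS}_{\R})^{-1}(\sigma) \cap \R_{\geq 0}^{\cS}$ is a union of faces of $\R_{\geq 0}^{\cS}$ that is itself convex, hence a single face — this is precisely $\Delta$-compatibility. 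For the "furthermore" clause: if $U \subset X$ already has immersive geometric tropicalization, then $\Phi$ is injective (linear and injective) on each original cone; subdividing a cone and then passing to a subcone does not destroy injectivity, so $\varphi^{\cS}_{\R}$ remains injective on every cone of $\widetilde{X}$, i.e. immersivity is preserved.

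The main obstacle I anticipate is not the combinatorics of refining polyhedral complexes — that is classical \cite{KKMS} — but the geometric realization: ensuring that the chosen subdivision of the conical polyhedral complex is actually induced by a toroidal modification that keeps $\widetilde{X}$ \emph{smooth} and \emph{complete} with \emph{simple normal crossing} (not merely toroidal) boundary. In the smooth SNC setting this should follow from the fact that nonsingular subdivisions correspond to iterated blowups along smooth centers that are closed unions of strata, but one must be careful that the "without self-intersection" hypothesis is preserved and that the boundary divisor stays SNC rather than just normal crossing; this likely requires a further barycentric-type subdivision and a careful bookkeeping of which strata are blown up, and is the step where the details are most delicate.
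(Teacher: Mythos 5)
Your overall architecture matches the paper's: part (b) reduces to part (a) by resolution of singularities; part (a) is translated into a purely combinatorial statement about the cone complex $\Sigma$ of the toroidal embedding $U \subset X$ and the geometric tropicalization map $\gtrop: |\Sigma| \to N_\R$ (the paper records this dictionary in Proposition \ref{smoothgtrop} and Remark \ref{compatiblecomplexcompatiblecompactification}); one then produces a unimodular subdivision of $\Sigma$ for which each $\gtrop^{-1}(\sigma)$, $\sigma \in \Delta$, meets every cone in a face, and realizes it by a toroidal modification. Your argument for the ``furthermore'' clause (injectivity of a linear map persists on subcones of a subdivision) is also the paper's.

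The genuine gap is in the step you dismiss as classical: producing the subdivision. The ``coarsest common refinement of the complex with the pullbacks $\Phi^{-1}(\sigma)$'' is not a well-defined subdivision of $\Sigma$. Within a single cone $\sigma^\alpha$ the set $\tau^\alpha = \Phi|_{\sigma^\alpha}^{-1}(\sigma)$ is a single convex cone that in general does \emph{not} cover $\sigma^\alpha$, so the common refinement $\Faces(\sigma^\alpha) \wedge \Faces(\tau^\alpha)$ only subdivides $\sigma^\alpha \cap \tau^\alpha$; to get a subdivision of all of $\sigma^\alpha$ one must decompose the non-convex complement $\sigma^\alpha \setminus \tau^\alpha$, and there is no canonical choice. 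Worse, these choices must agree on shared faces of the complex, since a cone complex is not a fan in a single vector space. This is exactly what the paper's Section \ref{compatibilityconecomplex} is for: Lemma \ref{technical} shows that, given a cone $\sigma^\alpha$, the cone $\tau^\alpha$, and a \emph{prescribed} subdivision of the boundary $\partial\sigma^\alpha$ already compatible with $\tau^\alpha$, one can extend it to a compatible subdivision of all of $\sigma^\alpha$ (the proof completes $\Faces(\tau^\alpha)$ and the boundary subdivision to complete fans and takes common refinements of those); Proposition \ref{combinatorialcompatiblemodification} then assembles these extensions by induction on the skeleta of $\Sigma$, and iterates over the finitely many $\sigma \in \Delta$ using the fact (Lemma \ref{keepsubdividing}) that further subdivision preserves $\sigma$-compatibility already achieved. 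Your ``union of faces that is convex is a single face'' observation is the paper's Lemma \ref{convexunionfaces} and is fine, but it only applies once the subdivision has actually been constructed. Finally, your worry about needing a barycentric subdivision to preserve the SNC/without-self-intersection property is unfounded: subdivisions of the cone complex of a toroidal embedding without self-intersection yield toroidal modifications again without self-intersection, and a unimodular subdivision gives a smooth total space with simple normal crossing boundary; no extra subdivision is needed for that.
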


Equipped with these results, we return to Example \ref{motivatingexample}.

\begin{example}
\label{motivatingexample2}
Suppose that $k$ has characteristic 0, and let $f, T, X, U$ be as in Example \ref{motivatingexample}. Then $\bA^{n+1}$ is the $T$-toric variety defined by the positive orthant $\R_{\geq 0}^{n+1}$. By Theorem \ref{compatiblemodification}, there exists an open immersion $U \subset \widetilde{X}$ into a smooth complete variety $\widetilde{X}$ such that $\widetilde{X} \setminus U$ is a simple normal crossing divisor and such that $U \subset \widetilde{X}$ has $\R_{\geq 0}^{n+1}$-compatible geometric tropicalization with respect to $U \hookrightarrow T$. Then by Theorem \ref{constructibilityformula}, using the notation in the statement of that theorem, the generating function
\[
	\sum_{w  \in \Z^{n+1}}\mu_X(\trop^{-1}(w)) \bx^w
\]
is equal to the rational function
\[
  	\bL^{-\dim X} \sum_{\substack{\cS \subset \cC(\widetilde{X} \setminus U) \\ \bigcap_{D \in \cS} D \neq \emptyset}} (\bL - 1)^{|\cS|} [Y_\cS] \prod_{D \in \cS} \frac{ \bL^{-(m_D+1)}\bx^{\val_D|_M}}{1 - \bL^{-(m_D+1)}\bx^{\val_D|_M} } \in \sM_X \langle \bx \rangle.
\] 
Thus by the discussion in Example \ref{motivatingexample}, the image of the zeta function $Z_f(s)$ in $\widehat{\sM}_X\llbracket s \rrbracket$ is equal to the rational function
\[
	 \bL^{-\dim X} \sum_{\substack{\cS \subset \cC(\widetilde{X} \setminus U) \\ \bigcap_{D \in \cS} D  \neq \emptyset}} (\bL - 1)^{|\cS|} [Y_\cS] \prod_{D \in \cS} \frac{ \bL^{-(m_D+1)}\varphi(\bx^{\val_D|_M})}{1 - \bL^{-(m_D+1)}\varphi(\bx^{\val_D|_M}) } \in \widehat{\sM}_X\llbracket s \rrbracket,
\]
where $\varphi(\bx^{(w_1, \dots, w_{n+1})}) = s^{w_{n+1}}$. Note that to prove Theorem \ref{constructibilityformula} in Section \ref{constructibilityproof}, we will use $\widetilde{X}$ to construct a log resolution of the pair $(X, X \setminus U)$, and the formula above for the image of $Z_f(s)$ is the usual formula one gets for the motivic zeta function from this log resolution.
\end{example}

In the next example, we explain how one can use Tevelev's theory of tropical compactifications \cite{Tevelev} and Theorem \ref{constructibilityformula} to recover a formula for the motivic zeta function of a polynomial that is non-degenerate with respect to its Newton polyhedron.

\begin{example}
Keep all notation as in Example \ref{motivatingexample2}, but also assume that $f$ is non-degenerate with respect to its Newton polyhedron. Then $U$ is sch\"{o}n in $T$. Let $\Delta_1$ be any fan in $\Z^{n+1}$ supported on $\Trop(U)$, let $\Sigma$ be any complete fan in $\Z^{n+1}$ containing $\R_{\geq 0}^{n+1}$, and let $\Delta_2 = \{\sigma \cap \sigma'  \, | \, \sigma \in \Delta_1, \sigma' \in \Sigma\}$ be the common refinement of $\Delta_1$ and $\Sigma$. Let $\Delta$ be a unimodular subdivision of $\Delta_2$, and let $X(\Delta)$ be the $T$-toric variety defined by $\Delta$. Then one can check, using the theory of tropical compactifications, that the closure of $U$ in $X(\Delta)$ is a simple normal crossing compactification of $U$, and one can check that by construction of $\Delta$, this compactification has $\R_{\geq 0}^{n+1}$-compatible geometric tropicalization with respect to $U \hookrightarrow T$. Thus we may choose $\widetilde{X}$ to be the closure of $U$ in $X(\Delta)$. In this special setting, one can explicitly calculate the integers $m_D$ in terms of the Newton polyhedron of $f$ and the lattice point $\val_D|_M$. One can also write each class $(\bL-1)^{|\cS|} [Y_\cS]$ as the class of an initial degeneration of $U$. Having done this, one can write the resulting formula for the image of $Z_f(s)$ in $\widehat{\sM}_X\llbracket s \rrbracket$ in a way that does not depend on the choices of $\Delta_1, \Sigma$, or $\Delta$.
\end{example}

We see that the combinatorics of the formula in Theorem \ref{constructibilityformula} depend on the geometric tropicalization maps denoted by $\varphi^\cS$ in Definition \ref{has_gtrop}. We thus study these maps. As a first result in this direction, we classify which maps of lattices can arise as $\varphi^\cS$. We show that up to embedding the codomain into a larger lattice, the maps of the form $\varphi^\cS$ are precisely the lattice maps whose induced map on the positive orthant is proper as a map of topological spaces. This is true even when we restrict ourselves to very affine varieties embedded in their intrinsic torus.

\begin{theorem}
\label{gtropproper}
\begin{enumerate}[(a)]

\item \label{gtropisproper} Let $T$ be an algebraic torus with character lattice $M$, let $N = M^\vee$, let $U \hookrightarrow T$ be a smooth closed subvariety, let $U \subset X$ be an open immersion into a smooth variety $X$ such that $X \setminus U$ is a simple normal crossing divisor, and let $\cC(X \setminus U)$ be the set of irreducible components of $X\setminus U$. For each $\cS \subset \cC(X \setminus U)$, let $\varphi^{\cS}: \Z^{\cS} \to N$ be the map of lattices such that for each $D \in \cS$, the standard basis vector of $\Z^{\cS}$ associated to $D$ is sent to $\val_D|_M \in N$.

If $\cS \subset \cC(X \setminus U)$ is such that $\bigcap_{D \in \cS} D \neq \emptyset$, then the map $\varphi^\cS_\R|_{\R_{\geq 0}^\cS}: \R_{\geq 0}^\cS \to N_\R$ is proper as a map of topological spaces.

\item \label{properisgtrop} Let $L$ be a lattice, let $m \in \Z_{> 0}$, let $\varphi: \Z^m \to L$ be a map of lattices such that
\[
	\varphi_\R|_{\R_{\geq 0}^m}: \R_{\geq 0}^m \to L_\R
\]
is proper as a map of topological spaces, and let $X$ be a smooth projective variety of dimension at least $\max(m,2)$.

Then there exists an open subvariety $U \subset X$, a collection $\{D_1, \dots, D_m\}$ of irreducible components of $X \setminus U$, a closed immersion $U \hookrightarrow T$ into an algebraic torus with co-character lattice $N$, and a map of lattices $\psi: L \to N$ such that
\begin{enumerate}[(i)]

\item $X \setminus U$ is a simple normal crossing divisor,

\item $D_1 \cap \dots \cap D_m \neq \emptyset$,

\item $\psi$ induces an isomorphism of $L$ onto a saturated sublattice of $N$,

\item and $\psi \circ \varphi: \Z^m \to N$ is the map sending the $j$th standard basis vector of $\Z^m$ to $\val_{D_j}|_M$, where $M = N^\vee$.

\end{enumerate}
Furthermore, if $X$ has Picard rank at least $m$, then $U$, $D_1, \dots, D_m$, $U \hookrightarrow T$, and $\psi$ can be chosen so that $M = \cO_U(U)^\times/k^\times$ and $U \hookrightarrow T$ is induced by a section $M \to \cO_U(U)^\times$ of $\cO_U(U)^\times \to M$.

\end{enumerate}
\end{theorem}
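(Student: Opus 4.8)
The plan is to handle the two parts by completely different methods: part (a) by a short valuation-theoretic argument, part (b) by an explicit construction of divisors on $X$ together with a carefully chosen torus embedding.

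For part (a): since $\varphi^\cS_\R$ is linear and $\R_{\geq 0}^\cS$ is a closed cone, the restriction $\varphi^\cS_\R|_{\R_{\geq 0}^\cS}$ is proper if and only if $\ker(\varphi^\cS_\R)\cap\R_{\geq 0}^\cS=\{0\}$, so it suffices to show that a vector $(a_D)_{D\in\cS}\in\R_{\geq 0}^\cS$ with $\sum_{D\in\cS}a_D(\val_D|_M)=0$ must be zero. This intersection is a rational cone, so if it were nonzero we could take $(a_D)$ nonzero and integral. I would then pick the generic point $\eta$ of an irreducible component of $\bigcap_{D\in\cS}D$; because $X\setminus U$ is a simple normal crossing divisor and $\eta$ lies on every $D\in\cS$, the local equations of the $D\in\cS$ are part of a regular system of parameters of $\cO_{X,\eta}$, so $(a_D)$ defines a monomial (Abhyankar) valuation $\nu$ on $k(X)=k(U)$ whose center on $X$ is contained in $D$ for every $D$ with $a_D>0$, hence is not in $U$. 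On the other hand $\nu$ vanishes on units of $\cO_{X,\eta}$, so for every $u\in M$ we get $\nu(\chi^u|_U)=\sum_{D\in\cS}a_D\val_D(\chi^u|_U)=\langle u,\sum_{D\in\cS}a_D(\val_D|_M)\rangle=0$; since the $\chi^u|_U$ generate $\cO_U(U)$ as a $k$-algebra, $\nu$ is nonnegative on $\cO_U(U)$ and therefore has a center on the affine variety $U$. As $X$ is separated a valuation has at most one center on $X$, and a center on $U$ is a center on $X$, so this is a contradiction, forcing $(a_D)=0$.

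For part (b): first I would record that $\varphi_\R|_{\R_{\geq 0}^m}$ proper means $\ker(\varphi_\R)\cap\R_{\geq 0}^m=\{0\}$, equivalently (linear programming duality / Gordan's lemma) that there is $\ell\in L^\vee$ with $\langle\varphi(e_j),\ell\rangle>0$ for all $j$. Next, the geometry: fix a very ample line bundle $H$ on $X$, replaced by a large multiple so that $X$ is projectively normal with respect to $H$; take $D_1,\dots,D_m$ general in $|H|$ and $E_1,\dots,E_r$ general in $|2H|$ spanning $\HH^0(X,2H)$, and set $U=X\setminus(D_1\cup\dots\cup D_m\cup E_1\cup\dots\cup E_r)$. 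By Bertini $X\setminus U$ is simple normal crossing, and $D_1\cap\dots\cap D_m$ is a nonempty complete intersection since $m\le\dim X$, giving (i) and (ii). Projective normality lets one show $U$ is very affine: $\cO_U(U)$ is generated by the degree-zero monomials in the $E_i$ and the $D_j$, the crucial point being that any product of two sections of $H$ lies in the span of the $E_i$, so for instance each ratio $\ell/D_j$ with $\ell\in\HH^0(X,H)$ is a $k$-linear combination of the units $E_i/D_j^2$ and hence lies in the subalgebra generated by units.

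It remains to build the torus embedding. The orders of vanishing of units of $\cO_U(U)$ along the $D_j$ are controlled by the classes $[D_j],[E_i]$ in $\Pic(X)$, and, crucially, a subgroup of $\cO_U(U)^\times$ can generate $\cO_U(U)$ as a $k$-algebra even when it is a proper subgroup of the unit group (it is enough that its $k$-span be large). Using this, the positivity supplied by $\ell$, and the freedom to enlarge the ambient torus by padding with extra generators having valuation $0$ along every $D_j$, I would choose a subgroup $\Gamma\subset\cO_U(U)^\times$ generating $\cO_U(U)$ as a $k$-algebra (so $U$ embeds as a closed subvariety of $\Spec k[\Gamma/(k^\times\cap\Gamma)]$) such that, for a suitable free lattice $M$ mapping onto $\Gamma$, the induced map $\psi\colon L\to N=M^\vee$ is a saturated embedding and $\psi\circ\varphi$ sends $e_j$ to $\val_{D_j}|_M$; one is free to prescribe the $D_j$-orders of a generating family of units subject only to the relations forced by $\Pic(X)$, and the hypothesis on $\varphi$ is exactly what makes this prescription consistent and $\psi$ saturated. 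For the final assertion, when $X$ has Picard rank at least $m$ one instead arranges the classes $[D_1],\dots,[D_m]$ (and of the auxiliary divisors) to be independent enough in $\Pic(X)$ that $M=\cO_U(U)^\times/k^\times$ already works, with $U\hookrightarrow T$ the intrinsic embedding.

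I expect the main obstacle to be this last construction. One must realize an arbitrary lattice map $\varphi$ — which need be neither injective nor have saturated image — as the divisorial valuation data of the boundary, while simultaneously keeping $U$ very affine, keeping $X\setminus U$ simple normal crossing with $D_1\cap\dots\cap D_m\neq\emptyset$, and making $\psi$ a saturated embedding. Reconciling the divisibility and linear-relation constraints among the $\val_{D_j}|_M$ with the very-affineness of $U$ is the delicate point, and is precisely where the latitude in choosing the ambient torus and a proper generating subgroup of units has to be exploited.
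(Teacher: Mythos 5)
Your argument for part (a) is correct and genuinely different from the paper's. The paper deduces part (a) from two more general statements about toroidal embeddings: Proposition \ref{smoothgtrop} identifies $\varphi^\cS_\R|_{\R_{\geq 0}^\cS}$ with the restriction of $\gtrop$ to a cone $\sigma^Y$ of the cone complex of $U \subset X$, and Proposition \ref{generalgtropproper} shows $\gtrop$ is proper whenever $g:U\to T$ is, by passing to Berkovich analytifications and using that $\trop\circ g^{\an}$ is proper, $U^{\an}\cap X^\beth$ is closed in $U^{\an}$, and $\trop_X$ is surjective. Your route -- reduce properness of a linear map on a closed cone to $\ker(\varphi^\cS_\R)\cap\R_{\geq 0}^{\cS}=\{0\}$, then rule out a nonzero integral relation $\sum a_D(\val_D|_M)=0$ by building the associated monomial valuation at the generic point of a component of $\bigcap_{D\in\cS}D$ and observing it would have centers both on the affine variety $U$ (since it is nonnegative on the $k$-algebra generated by the $\chi^u|_U$) and on $X\setminus U$, contradicting separatedness -- is more elementary and self-contained. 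What it gives up is the general statement about $\gtrop$ for arbitrary toroidal embeddings, which the paper reuses elsewhere (e.g.\ in Proposition \ref{gtropindeterminacy}).

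For part (b) there is a genuine gap: the construction of the torus embedding realizing conditions (iii) and (iv) is precisely the content of the theorem, and your proposal only asserts that it can be done, conceding that it is ``the main obstacle'' and ``the delicate point.'' Concretely, your divisor configuration -- all $D_j$ general in a single system $|H|$ and auxiliary divisors in $|2H|$ -- never encodes the matrix of $\varphi$, so there is no mechanism forcing $\val_{D_j}|_M$ to equal $\psi(\varphi(e_j))$ for the \emph{given} $\varphi$, nor forcing $\psi$ to be saturated; ``one is free to prescribe the $D_j$-orders of a generating family of units subject only to the relations forced by $\Pic(X)$'' is exactly the claim that needs proof, and with all $[D_j]$ equal the achievable valuation vectors are heavily constrained. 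The paper's construction puts the matrix into the geometry as follows: writing $\varphi(e_j)=\sum_i a_{ij}v_i$ with $a_{ij}\geq 0$ in a unimodular cone containing $\varphi_\R(\R_{\geq 0}^m)$ (this is where properness of $\varphi_\R|_{\R_{\geq 0}^m}$ enters), one takes $m$ \emph{distinct} very ample bundles $\cL_j$ with $D_j=\divr f_j$, forms $\cE_i=\bigotimes_j\cL_j^{\otimes a_{ij}}$ with distinguished section $s_i=\bigotimes_j f_j^{\otimes a_{ij}}$, completes each $s_i$ to a basis of $\HH^0(\cE_i)$ by Bertini, and embeds $X$ into $\bP=\prod_i\bP^{r_i}$ so that the $i$-th boundary hyperplane pulls back to $\sum_j a_{ij}D_j$; this yields $\val_{D_j}|_M=\sum_i a_{ij}w_i$ exactly, saturation of $\psi$ comes from the $w_i,w_1^{(i')}$ being part of a basis of $N$ (smoothness of $\bP$), and the intrinsic-torus statement is a separate argument (Propositions \ref{properisgtropinjective} and \ref{properisgtropsurjective}) using linear independence of the rows of $(a_{ij})$ and independence of the $\cL'_j$ in $\Pic(X)$. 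None of this is supplied or substituted for in your sketch, so part (b) is not proved.
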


\begin{remark}
If $U$ is a very affine variety, the algebraic torus $T$ with character lattice $M = \cO_U(U)^\times / k^\times$, often called the \emph{intrinsic torus} of $U$, and the closed immersions $U \hookrightarrow T$ induced by sections $M \to \cO_U(U)^\times$ of $\cO_U(U)^\times \to M$ are intrinsically defined from the variety $U$. We thus think of Theorem \ref{gtropproper}(\ref{properisgtrop}) as saying something intrinsic about the embedding $U \subset X$. More precisely, Theorem \ref{gtropproper}(\ref{properisgtrop}) describes how much information about boundary divisors can be lost after restricting their valuations to the rational functions that are units on $U$. The fact that $U \hookrightarrow T$ can be chosen to be one of these intrinsic embeddings is also important in the proof of Theorem \ref{immersivegtrop}(\ref{surfacewithoutimmersivegtrop}) below.

In attempting to prove Theorem \ref{gtropproper}(\ref{properisgtrop}), one might begin with a very affine variety $U$ embedded in an algebraic torus $T'$ and then proceed by taking monomial maps from $T'$ to other algebraic tori. But such a method would not allow one to guarantee that the resulting embedding is one of the intrinsic embeddings into $T$, and thus would not say something intrinsic about $U \subset X$.
\end{remark}

Finally, we study when the geometric tropicalization maps are injective, or in the terminology of Definition \ref{has_gtrop}, we study when compactifications have immersive geometric tropicalization. We show that the existence of a compactification with immersive geometric tropicalization is more general than a very affine variety being sch\"{o}n, in the sense of Tevelev \cite{Tevelev}, but also that not all very affine varieties have a compactification with immersive geometric tropicalization.

\begin{theorem}
\label{immersivegtrop}
\begin{enumerate}[(a)]

\item \label{surfacewithoutimmersivegtrop} There exists a very affine surface $U$ such that for all closed immersions $U \hookrightarrow T$ into an algebraic torus and for all open immersions $U \subset X$ into a smooth complete variety $X$ such that $X \setminus U$ is a simple normal crossing divisor, we have that $U \subset X$ does not have immersive geometric tropicalization with respect to $U \hookrightarrow T$.

\item \label{immersivenotschon} Let $X$ be a smooth projective variety of dimension at least 2. Then there exists a very affine open subvariety $U \subset X$ such that $X \setminus U$ is a simple normal crossing divisor and such that if $T$ is the algebraic torus with character lattice $M = \cO_U(U)^\times/k^\times$ and $U \hookrightarrow T$ is a closed immersion induced by a section $M \to \cO_U(U)^\times$ of $\cO_U(U)^\times \to M$, then $U$ is not sch\"{o}n in $T$, and $U \subset X$ has immersive geometric tropicalization with respect to $U \hookrightarrow T$.

\end{enumerate}
\end{theorem}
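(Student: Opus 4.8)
The plan is to prove both statements by explicit construction. Throughout write $M = \cO_U(U)^\times/k^\times$ for the intrinsic character lattice of a very affine $U$ and $N = M^\vee$. I begin with a reduction for part (\ref{surfacewithoutimmersivegtrop}): if $U\hookrightarrow T$ is any closed immersion into an algebraic torus, then pulling back characters and composing with $\cO_U(U)^\times\to M$ gives a homomorphism from the character lattice of $T$ into $M$ through which every $\val_D|_{M(T)}$ factors, so any linear relation $\sum_{D\in\cS}c_D\val_D|_M = 0$ pushes forward to the corresponding relation for the character lattice of $T$. Hence to prove (\ref{surfacewithoutimmersivegtrop}) it is enough to produce one very affine surface $U$ such that for every open immersion $U\subset X$ into a smooth complete variety with simple normal crossing boundary there is a subset $\cS\subset\cC(X\setminus U)$ with $\bigcap_{D\in\cS}D\neq\emptyset$ on which the vectors $\{\val_D|_M\}_{D\in\cS}$ are $\R$-linearly dependent. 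Since $\dim U = 2$ and $X\setminus U$ is simple normal crossing, such an $\cS$ has at most two elements, so what must be found is a $U$ for which every simple normal crossing compactification has either a boundary component $D$ with $\val_D|_M = 0$ or two meeting boundary components with proportional restricted valuations.

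For part (\ref{surfacewithoutimmersivegtrop}) I would obtain $U$ from Theorem \ref{gtropproper}(\ref{properisgtrop}). Take $m = 2$ and a lattice map $\varphi\colon\Z^2\to L$ whose restriction $\varphi_\R|_{\R_{\geq0}^2}$ is proper but for which $\varphi_\R$ is not injective; the simplest is $L = \Z$ with $\varphi(e_1) = \varphi(e_2) = 1$, so that $(a,b)\mapsto a+b$ has bounded (triangular) fibers over compact sets while $\varphi(e_1-e_2) = 0$. Apply Theorem \ref{gtropproper}(\ref{properisgtrop}) with $X$ a smooth projective surface of Picard rank at least two: this gives a very affine surface $U$, an open immersion $U\subset X$ with simple normal crossing boundary, boundary components $D_1, D_2$ with $D_1\cap D_2\neq\emptyset$, a closed immersion into the intrinsic torus $U\hookrightarrow T$, and a saturated embedding $\psi\colon L\hookrightarrow N$ with $\val_{D_j}|_M = \psi(\varphi(e_j))$, hence $\val_{D_1}|_M = \val_{D_2}|_M$; so the stratum $\{D_1, D_2\}$ already obstructs immersivity for this compactification. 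The substantive step is to rule out \emph{all} other simple normal crossing compactifications of this $U$. I would do this by studying the intrinsic tropicalization $\Trop(U)\subset N_\R$: choosing the auxiliary data in Theorem \ref{gtropproper}(\ref{properisgtrop}) so that $N$ has small rank and $\psi(L)$ is a rank-one direct summand of $N$, one shows that the coincidence $\val_{D_1}|_M = \val_{D_2}|_M$ forces the cone complex $\Trop(U)$ to be ``pinched'' along the ray $\rho = \psi(\R_{\geq0})$ — for instance every maximal cone of $\Trop(U)$ meeting $\rho$ carries tropical multiplicity at least two, or $\rho$ lies in the relative interior of the union of those cones — and that, because the cones over strata of any simple normal crossing compactification must cover $\Trop(U)$ with the correct multiplicities (as in Hacking--Keel--Tevelev), this pinching cannot be realized by a compactification in which all strata along $\rho$ map injectively. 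Promoting one collapsed stratum to a collapsed stratum in every compactification is the main obstacle here, and it is precisely this step that uses the freedom in Theorem \ref{gtropproper}(\ref{properisgtrop}) to take $U\hookrightarrow T$ to be the intrinsic embedding, since the whole argument must be run over the intrinsic lattice $M$.

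For part (\ref{immersivenotschon}), given a smooth projective $X$ of dimension at least two, I would take $U = X\setminus D$ for a carefully chosen simple normal crossing divisor $D\subset X$. To arrange very affineness together with $M = \cO_U(U)^\times/k^\times$ as in Theorem \ref{gtropproper}, fix a very ample class $H$ and let most components of $D$ be general members of $|H|$: their classes coincide in $\Pic(X)$, so $M$ has the expected rank, and up to the single relation $\sum_D\val_D|_M = 0$ their restricted valuations form a standard basis of $N$; in particular no such $\val_D|_M$ vanishes and any two are linearly independent, which gives immersivity on every stratum built from these components. I would then adjoin a bounded number of further components, possibly replacing some general members of $|H|$ by members in a prescribed \emph{transverse} but non-generic position, so that (i) $D$ stays simple normal crossing and the new restricted valuations keep every stratum injective, while (ii) some initial degeneration $\mathrm{in}_w(U)$ becomes singular — arranged so that the leading form of $U$ along a distinguished cone of $\Trop(U)$ has a repeated component or a singular crossing locus, making $\mathrm{in}_w(U)$ non-reduced or singular along a positive-dimensional set. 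By Tevelev's smoothness criterion this shows $U$ is not sch\"{o}n in $T$, hence not sch\"{o}n with respect to any torus embedding. The hard part of (\ref{immersivenotschon}) is reconciling the two requirements: immersivity pushes the boundary components toward general position while failure of sch\"{o}nness needs a genuine degeneracy, so one must pin down a degeneracy that is invisible to the linear-independence conditions on strata but detected by smoothness of initial degenerations, and then verify both properties for the resulting explicit $U\subset X$.
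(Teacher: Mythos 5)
Your setup for part (\ref{surfacewithoutimmersivegtrop}) matches the paper's: take $m=2$, $L=\Z$, $\varphi(e_1)=\varphi(e_2)=1$, apply Theorem \ref{gtropproper}(\ref{properisgtrop}) to get one compactification $U\subset X'$ with meeting boundary divisors $D_1,D_2$ satisfying $\val_{D_1}|_M=\val_{D_2}|_M$, and reduce an arbitrary $U\hookrightarrow T$ to the intrinsic embedding via a monomial morphism. But the step you yourself flag as ``the main obstacle'' --- passing from one bad compactification to \emph{all} compactifications --- is a genuine gap, and the mechanism you propose (a ``pinching'' of $\Trop(U)$ along the ray $\psi(\R_{\geq 0})$ detected by tropical multiplicities) is not a workable obstruction: the weighted fan $\Trop(U)$ does not by itself preclude immersive compactifications (sch\"on varieties with multiplicity-two cones admit them via unimodular tropical compactifications), so no invariant of $\Trop(U)$ alone can do this job. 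The paper's resolution is birational geometry of surfaces: choose $X'$ to be a \emph{non-ruled minimal surface} of Picard rank at least $2$ (e.g.\ the Fermat quartic). Uniqueness of minimal models then gives, for every smooth SNC compactification $\widetilde{X}$ of $U$, a morphism $\widetilde{X}\to X'$ restricting to the identity on $U$ and factoring as a sequence of point blow-ups; an elementary lemma shows a linearly dependent pair of meeting boundary divisors survives each blow-up (either the strict transforms still meet, or one strict transform meets the exceptional divisor, whose restricted valuation is again dependent on it). Without the non-ruled minimality hypothesis you have no handle on arbitrary compactifications of $U$.

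For part (\ref{immersivenotschon}) you again correctly identify the tension between immersivity (wanting general position) and failure of sch\"onness (wanting a degeneracy), but you do not resolve it; ``members of $|H|$ in transverse but non-generic position'' will not produce a non-smooth initial degeneration while keeping all strata immersive. The paper's device is multiplicity, not position: fix one very ample $\cL$ with smooth irreducible $D=\divr f$, set $\cE=\cL^{\otimes 2}$, and embed $X\hookrightarrow\bP^r$ by a basis of $\HH^0(\cE)$ whose first element is the \emph{square} $s=f^{\otimes 2}$, the rest being general sections with smooth divisors $E^{(\ell)}$. Then $\val_D|_M=2w$ and $\val_{E^{(\ell)}}|_M=w^{(\ell)}$ for the rays $w,w^{(1)},\dots,w^{(r)}$ of $\bP^r$, so any stratum's valuations are part of a basis of $N$ up to the harmless factor $2$, giving immersivity; yet the scheme-theoretic intersection $X\cap H=2D$ is non-reduced, so the multiplication map $T\times X\to X(\Delta)$ of the tropical compactification is flat and surjective but has a non-reduced fiber and is therefore not smooth, i.e.\ $U$ is not sch\"on. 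This is exactly a degeneracy invisible to linear independence --- it sits in the multiplicity $2$ of $\val_D|_M$ --- and it is the missing idea in your construction.
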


\begin{remark}
Let $U$ be a very affine variety of dimension at least 2, and let $U \subset X$ be an open immersion into a smooth variety such that $X \setminus U$ is a simple normal crossing divisor. Given a closed immersion $U$ into an algebraic torus $T$, it is easy to find simple normal crossing compactifications of $U$ that do not have immersive geometric tropicalization with respect to $U \hookrightarrow T$. For example, one may blow-up a point that lies on only one boundary divisor of $X$. But this method would not guarantee that all simple normal crossing compactifications of $U$ do not have immersive geometric tropicalization with respect to $U \hookrightarrow T$. By using Theorem \ref{gtropproper}(\ref{properisgtrop}), we can take a reverse approach, instead constructing the desired very affine variety $U$ by starting with its compactification. In Section \ref{immersivegeometrictropicalizationsection}, we apply this idea to a non-ruled minimal surface to prove Theorem \ref{immersivegtrop}(\ref{surfacewithoutimmersivegtrop}).
\end{remark}


\begin{acknowledgements}
I am grateful to Sam Payne for his feedback throughout this project. I would also like to acknowledge useful discussions with Daniel Corey, Netanel Friedenberg, Paul Hacking, Yuchen Liu, and Dhruv Ranganathan. This work was supported by NSF Grant DMS-1702428 and a Graduate Research Fellowship from the NSF.
\end{acknowledgements}


\section{Preliminaries}

We set notation and recall some facts about motivic integration, the topology of Berkovich analytic spaces and tropicalization, and geometric tropicalization.

\subsection{Motivic Integration}

For further detail on the motivic integration results in this sub-section, we refer to the book \cite{CNS}.

For any finite type $k$-scheme $X$, the Grothendieck ring of finite type $X$-schemes will be denoted by $K_0(\mathbf{Var}_X)$, the ring obtained by inverting the Lefschetz class in $K_0(\mathbf{Var}_X)$ will be denoted by $\sM_X$, the image of the Lefschetz class in $\sM_X$ will be denoted by $\bL$, and if $Y$ is a finite type $X$-scheme, the class of $Y$ in $\sM_X$ will be denoted by $[Y]$. We will endow $\sM_X$ with the topology induced by the dimension filtration. If $g: X' \to X$ is a morphism of finite type $k$-schemes, there is ring homomorphism $g^*: \sM_X \to \sM_{X'}$, given by base change, and there is an additive group homomorphism $g_!: \sM_{X'} \to \sM_X$, given by composition with $g$, that satisfies a projection formula with $g^*$.

For any finite type $k$-scheme $X$, the $n$th jet scheme of $X$ will be denoted by $\sL_n(X)$, the truncation morphisms will be denoted by $\theta^m_n: \sL_m(X) \to \sL_n(X)$, the arc scheme of $X$ will be denoted by $\sL(X) = \varprojlim_n \sL_n(X)$, and the morphisms $\sL(X) \to \sL_n(X)$ will be denoted by $\theta_n$. The following theorem is a direct consequence of a more general result, due to Bhatt \cite[Theorem 1.1]{Bhatt}, on points of $X$ valued over ideal-adically complete rings.

\begin{theorem}[Bhatt]
Let $X$ be a finite type $k$-scheme. Then $\sL(X)$ represents the functor taking any $k$-algebra $A$ to the set of $k$-morphisms from $\Spec(A\llbracket t \rrbracket)$ to $X$, and under this identification, each morphism $\theta_n: \sL(X) \to \sL_n(X)$ is the truncation morphism.
\end{theorem}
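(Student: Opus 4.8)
The plan is to deduce the statement from Bhatt's theorem \cite[Theorem 1.1]{Bhatt} together with the standard description of the jet schemes as representing objects. Recall that $\sL_n(X)$ represents the functor $A \mapsto \mathrm{Hom}_k(\Spec(A[t]/t^{n+1}), X)$ on $k$-algebras, and that the truncation morphism $\theta^{n+1}_n \colon \sL_{n+1}(X) \to \sL_n(X)$ is the one induced by the surjection $A[t]/t^{n+2} \twoheadrightarrow A[t]/t^{n+1}$. Since $X$ is of finite type over $k$, each $\theta^{n+1}_n$ is affine, so the inverse limit $\sL(X) = \varprojlim_n \sL_n(X)$ exists as a $k$-scheme and satisfies $\sL(X)(A) = \varprojlim_n \sL_n(X)(A) = \varprojlim_n \mathrm{Hom}_k(\Spec(A[t]/t^{n+1}), X)$ for every $k$-algebra $A$.

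Next I would identify this inverse limit functor with the arc functor. For any $k$-algebra $A$, the ring $A\llbracket t \rrbracket$ is separated and complete for the $(t)$-adic topology, and the ideal $(t)$ is finitely generated, so Bhatt's theorem applies with $R = A\llbracket t \rrbracket$ and gives a bijection
\[
	\mathrm{Hom}_k(\Spec(A\llbracket t \rrbracket), X) \xrightarrow{\sim} \varprojlim_n \mathrm{Hom}_k\!\left(\Spec(A\llbracket t \rrbracket/(t)^{n+1}), X\right),
\]
induced by the closed immersions $\Spec(A\llbracket t \rrbracket/(t)^{n+1}) \hookrightarrow \Spec(A\llbracket t \rrbracket)$, where we use $A\llbracket t \rrbracket/(t)^{n+1} = A[t]/t^{n+1}$. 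Both sides are functorial in $A$ and Bhatt's bijection is natural in $A$, so this is an isomorphism of functors on $k$-algebras; composing with the previous paragraph shows that $\sL(X)$ represents $A \mapsto \mathrm{Hom}_k(\Spec(A\llbracket t \rrbracket), X)$.

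Finally, for the statement about $\theta_n$: under the representability of $\sL(X) = \varprojlim_m \sL_m(X)$, the morphism $\theta_n$ corresponds on $A$-points to the projection $\varprojlim_m \mathrm{Hom}_k(\Spec(A[t]/t^{m+1}), X) \to \mathrm{Hom}_k(\Spec(A[t]/t^{n+1}), X)$. Tracing this through the bijection above, it becomes the map sending an arc $\Spec(A\llbracket t \rrbracket) \to X$ to its restriction along $\Spec(A[t]/t^{n+1}) \hookrightarrow \Spec(A\llbracket t \rrbracket)$, which is exactly the $n$-th truncation on the arc functor. By the Yoneda lemma, $\theta_n$ is the truncation morphism of schemes.

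I expect no substantive obstacle once Bhatt's theorem is granted: the only points requiring care are that the transition morphisms $\theta^{n+1}_n$ are affine, so that the scheme-theoretic inverse limit exists and commutes with taking $A$-points, and that $A\llbracket t \rrbracket$ really does satisfy the completeness hypothesis of \cite[Theorem 1.1]{Bhatt} for the finitely generated ideal $(t)$. Were one to avoid citing Bhatt, the crux would instead be the gluing step for non-affine $X$ --- promoting a compatible system of maps $\Spec(A[t]/t^{n+1}) \to X$, all sharing the same underlying continuous map $|\Spec A| \to |X|$, to a unique map out of $\Spec(A\llbracket t \rrbracket)$ --- which reduces, via a finite affine open cover of $X$ refined over $\Spec A$, to the elementary fact that $\mathrm{Hom}_k(R, A\llbracket t \rrbracket) = \varprojlim_n \mathrm{Hom}_k(R, A[t]/t^{n+1})$.
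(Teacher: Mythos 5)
Your proposal is correct and follows exactly the route the paper takes: the paper gives no argument beyond observing that the statement is a direct consequence of \cite[Theorem 1.1]{Bhatt}, and your write-up simply fills in the routine details (representability of the jet functors, existence of the inverse limit along affine transition maps, applicability of Bhatt's theorem to the $(t)$-adically complete ring $A\llbracket t \rrbracket$, and Yoneda). No issues.
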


If $X$ is a smooth variety, $\mu_X$ will denote its associated motivic measure, which assigns to each constructible subset of $\sL(X)$ a volume in $\sM_X$. Note that we are using the notion of constructible subsets of a not necessarily noetherian scheme. In the case of the scheme $\sL(X)$, constructible subsets coincide with the subsets of the form $\theta_n^{-1}(C)$ for some constructible subset $C \subset \sL_n(X)$. For this reason, the constructible subsets of $\sL(X)$ are also called \emph{cylinders}.

\begin{definition}
Let $C$ be a constructible subset of $\sL(X)$ and $\alpha: C \to \Z$ be a function such that for each $n \in \Z$, the fiber $\alpha^{-1}(n)$ is a constructible subset of $C$. Then the \emph{motivic integral of $\alpha$} is
\[
	\int_C \bL^{-\alpha} \dmu_X = \sum_{n \in \Z} \mu_X(\alpha^{-1}(n))\bL^{-n} \in \sM_X.
\]
Note that, by the quasi-compactness of the constructible topology, $\alpha$ only takes finitely many values, so the above sum is finite.
\end{definition}

If $g: X' \to X$ is a morphism of smooth varieties, the jacobian ideal of $g$ is a locally principal ideal on $X'$, and its associated order function on the arc scheme of $X'$ will be denoted by $\ordjac_g: \sL(X') \to \Z_{\geq 0} \cup \{\infty\}$. We will use the following version of the motivic change of variables formula, which is a special case of Theorem 1.2.5 in Chapter 5 of \cite{CNS}.

\begin{theorem}[Motivic Change of Variables Formula]
Let $g: X' \to X$ be a morphism of smooth varieties, let $C$ be a constructible subset of $\sL(X)$ and $C'$ be a constructible subset of $\sL(X')$, and let $\alpha: C \to \Z$ be a function such that each $\alpha^{-1}(n)$ is a constructible subset of $C$. Assume that for each field extension $k'$ of $k$, $\sL(g)$ induces a bijection between the $k'$ points of $C'$ and the $k'$ points of $C$, and assume that $C' \cap \ordjac_g^{-1}(\infty) = \emptyset$. Then
\begin{itemize}

\item the function $\beta = \alpha \circ \sL(g) + \ordjac_g: C' \to \Z$ is a function such that each $\beta^{-1}(n)$ is a constructible subset of $C'$,

\item and
\[
	\int_C \bL^{-\alpha} \dmu_X = g_! \int_{C'} (g^*\bL)^{-\beta}\dmu_{X'} \in \sM_{X}.
\]

\end{itemize}
\end{theorem}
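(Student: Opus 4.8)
The plan is to obtain this statement as a special case of the general motivic change of variables formula \cite[Chapter~5, Theorem~1.2.5]{CNS}, which applies in sufficient generality (including the relative setting valued in $\sM_X$); the only genuine work is to translate the hypotheses stated here into the ones demanded there. First I would record the structural inputs, all already set up in this section: for a morphism $g\colon X'\to X$ of finite type $k$-schemes, the identities $\theta_n\circ\sL(g)=\sL_n(g)\circ\theta'_n$ together with the fact that each $\sL_n(g)$ is a morphism of finite type $k$-schemes show that $\sL(g)\colon\sL(X')\to\sL(X)$ pulls cylinders back to cylinders; the ring map $g^*\colon\sM_X\to\sM_{X'}$ carries $\bL$ to $\bL$ (so $g^*\bL$ is just the class of $\bA^1_{X'}$); and $g_!\colon\sM_{X'}\to\sM_X$ satisfies a projection formula with $g^*$.

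\emph{Well-definedness.} Since each $\alpha^{-1}(n)$ is a cylinder and $\sL(g)$ pulls cylinders back to cylinders, each $(\alpha\circ\sL(g))^{-1}(n)$ is a cylinder in $\sL(X')$. The jacobian ideal of $g$ is locally principal, so over the locus where it is finite the order function $\ordjac_g$ has cylindrical level sets; hence each
\[
	\beta^{-1}(n)=\bigsqcup_{i+j=n}\bigl((\alpha\circ\sL(g))^{-1}(i)\cap\ordjac_g^{-1}(j)\bigr)
\]
is a cylinder in $\sL(X')$, and because $C'\cap\ordjac_g^{-1}(\infty)=\emptyset$ the function $\beta$ takes only finitely many values on $C'$, so $\int_{C'}(g^*\bL)^{-\beta}\dmu_{X'}$ is a well-defined finite sum. \emph{Applying the cited theorem.} The hypothesis that, for every field extension $k'/k$, $\sL(g)$ restricts to a bijection between the $k'$-points of $C'$ and those of $C$ (in the functor-of-points description recalled above) yields in particular that $\sL(g)$ maps $C'$ onto $C$ and is injective on $C'$; this, together with $C'\cap\ordjac_g^{-1}(\infty)=\emptyset$ and the routine reduction to finitely many cylinders on which $\ordjac_g$ is constant, is exactly the input to \cite[Chapter~5, Theorem~1.2.5]{CNS}. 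Applying it gives $\int_C\bL^{-\alpha}\dmu_X=g_!\int_{C'}(g^*\bL)^{-(\alpha\circ\sL(g)+\ordjac_g)}\dmu_{X'}=g_!\int_{C'}(g^*\bL)^{-\beta}\dmu_{X'}$, which is the claim.

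The step I expect to require the most care is this last hypothesis-matching: the formulation in \cite{CNS} encodes ``$\sL(g)$ is injective modulo a measure-zero set'' through its own bookkeeping (a decomposition of $C'$ into cylinders with a uniform fibration-type statement over each, and a bound on $\ordjac_g$), and one must verify that the clean hypothesis here --- a genuine bijection on $k'$-points for all $k'/k$ --- implies that bookkeeping and leaves no residual measure-zero correction terms. The remaining points (the cylinder computations above and the compatibility of $g^*$, $g_!$, and the projection formula with the relative measures $\mu_X$, $\mu_{X'}$) are routine given the facts recalled at the start of this section.
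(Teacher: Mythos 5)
Your proposal takes exactly the same route as the paper: the paper gives no proof of this statement, simply asserting that it is a special case of Theorem 1.2.5 in Chapter 5 of \cite{CNS}, which is precisely the reduction you carry out (with more detail on the hypothesis-matching than the paper bothers to record). Your additional verifications of cylindricity of $\beta^{-1}(n)$ and of the bijectivity hypothesis are sound and consistent with how the theorem is applied later in the paper.
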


We will also use the following well known elementary fact about motivic volumes, see for example Lemma 1.2.2 in Chapter 6 of \cite{CNS}.

\begin{proposition}
\label{stratumvolume}
Let $X$ be a smooth variety, let $U \subset X$ be an open subvariety such that $X \setminus U$ is a simple normal crossing divisor, let $\cC(X \setminus U)$ be the collection of irreducible components of $X \setminus U$, and for each $D \in \cC(X \setminus U)$, let $\cI_D$ be the ideal sheaf of $D$ in $X$.

Let $\bn = (n_D)_D \in \Z_{\geq 0}^{\cC(X \setminus U)}$, set
\[
	A = \{x \in \sL(X) \, | \, \text{$\ord_{\cI_{D}}(x) = n_D$ for all $D \in \cC(X \setminus U)$}\} ,
\]
set
\[
	\cS = \{D \in \cC(X \setminus U) \, | \, n_D > 0\},
\]
and set
\[
	Y = \left(\bigcap_{D \in \cS} D\right) \setminus \left(\bigcup_{E \in \cC(X \setminus U) \setminus \cS} E\right)
\]
considered as a scheme over $X$ by inclusion.

Then $A$ is a constructible subset of $\sL(X)$, and
\[
	\mu_X(A) = \bL^{-\dim X} (\bL-1)^{|\cS|} \bL^{-\sum_{D \in \cS} n_D} [Y] \in \sM_X.
\]
\end{proposition}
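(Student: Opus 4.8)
The plan is to establish constructibility by exhibiting $A$ as a cylinder, i.e.\ the preimage of a constructible set under a jet truncation, and then to compute $\mu_X(A)$ by reducing, Zariski-locally on $X$, to an explicit model situation on affine space.

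\emph{Constructibility and first reduction.} Set $N = \max_{D} n_D$. Writing a local generator $g$ of $\cI_D$, one checks that $\{x \in \sL(X) : \ord_{\cI_D}(x) \geq j\}$ is the preimage under $\theta_{j-1}$ of the closed subscheme of $\sL_{j-1}(X)$ along which the pullback of $\cI_D$ to the universal $(j-1)$-jet vanishes; hence $\{x : \ord_{\cI_D}(x) = n_D\}$ is a cylinder cut out at level $n_D$. Intersecting over the finitely many $D \in \cC(X \setminus U)$ gives $A = \theta_N^{-1}(C)$ for a constructible $C \subseteq \sL_N(X)$, so $A$ is constructible. Since $X$ is smooth, $\theta_N$ is surjective, so $\theta_N(A) = C$, and by the defining property of the motivic measure on cylinders
\[
\mu_X(A) = [C \to X]\,\bL^{-(N+1)\dim X},
\]
where $C$ is regarded as an $X$-scheme via $\sL_N(X) \to \sL_0(X) = X$. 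It remains to compute the class $[C \to X] \in \sM_X$.

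\emph{Reduction to a local model.} For an arc $x$ we have $\ord_{\cI_D}(x) > 0$ iff $x(0) \in D$, so every point of $C$ lies over $Y$, and by additivity of the Grothendieck class it suffices to stratify $Y$ into locally closed pieces and add up the corresponding contributions. Because $X \setminus U$ is a simple normal crossing divisor with normal components, every point $y \in Y$ has a Zariski-open neighbourhood $V \subseteq X$ carrying regular functions $f_1, \dots, f_d$ ($d = \dim X$) for which $p = (f_1,\dots,f_d) : V \to \bA^d$ is étale, $V$ meets no component outside $\cS$, and for each $D \in \cS$ (each such $D$ passing through $y$) one has $\cI_D|_V = p^{*}\cI_{\{y_j = 0\}}$ for a distinct coordinate index $j$; this comes from choosing the $f_i$ so that those cutting out the $D \in \cS$ form part of a regular system of parameters at $y$. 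Covering $Y$ by finitely many such $V_i$ and passing to a disjoint stratification $Y = \bigsqcup_i Y_i$ with each $Y_i \subseteq V_i$ locally closed, we reduce to computing, for each $i$, the class over $Y_i$ of $C \cap \sL_N(V_i)$.

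\emph{The model computation and conclusion.} On a chart $p_i : V_i \to \bA^d$, étaleness gives $\sL_N(V_i) = \sL_N(\bA^d) \times_{\bA^d} V_i$ and $\ord_{\cI_D} = \ord_{\{y_j = 0\}} \circ \sL(p_i)$, so $C \cap \sL_N(V_i)$ is the base change of the model locus $\widetilde{C} \subseteq \sL_N(\bA^d)$ of $N$-jets $(a_1(t), \dots, a_d(t))$ with $\ord a_\ell = n_{D_\ell}$ along the $r = |\cS|$ boundary coordinates. Reading off coefficients, $\widetilde{C}$ is literally a product $\bA^{d-r} \times \bG_m^{\,r} \times \bA^{\,e}$ over its image $\bA^{d-r}$, with $e = \sum_{D \in \cS}(N - n_D) + (d-r)N = (\dim X)N - \sum_{D \in \cS} n_D$; base changing along $p_i$, the part of $C$ lying over $V_i$ is $(Y \cap V_i) \times \bG_m^{\,r} \times \bA^{\,e}$ as an $X$-scheme, and restricting to $Y_i$ gives $Y_i \times \bG_m^{\,r} \times \bA^{\,e}$. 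Summing over $i$ yields $[C \to X] = (\bL - 1)^{|\cS|}\bL^{\,e}[Y]$, and substituting into the formula for $\mu_X(A)$ gives the claim, using $e - (N+1)\dim X = -\dim X - \sum_{D \in \cS} n_D$ and $\sum_{D \in \cS} n_D = \sum_{D \in \cC(X\setminus U)} n_D$.

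\emph{Main obstacle.} The only step with real content is the reduction to the affine model: producing Zariski-local coordinates simultaneously adapted to every boundary component meeting $V$ — which is exactly where the hypothesis that $X \setminus U$ is simple normal crossing with normal components is used — and verifying that the resulting étale map identifies the ideal sheaves $\cI_D$ with coordinate ideals, so that contact orders are preserved and the jet schemes base-change as required. Everything after that is the standard dictionary between cylinders and jet schemes, additivity of the class in $\sM_X$, and a coefficient count on affine space.
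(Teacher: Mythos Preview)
Your argument is correct, and it is the standard proof: cut out $A$ as a cylinder at level $N=\max_D n_D$, pass \'etale-locally to coordinates adapted to the simple normal crossing boundary so that the order conditions become coordinatewise vanishing conditions on jets of $\bA^d$, read off the class $(\bL-1)^{|\cS|}\bL^{(\dim X)N-\sum n_D}$ over each stratum, and globalise by additivity. The arithmetic at the end checks out.

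There is nothing to compare against: the paper does not prove Proposition~\ref{stratumvolume} at all. It is stated as a ``well known elementary fact'' with a pointer to Lemma~1.2.2 in Chapter~6 of \cite{CNS}, and no argument is given. So you have supplied what the paper omits, and by exactly the route the cited reference takes.
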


\subsection{Berkovich Spaces and Tropicalization}

If $X$ is a finite type $k$-scheme, $X^\an$ will denote the underlying topological space of the Berkovich analytification of $X$ over the trivially valued field $k$, as defined in \cite{Berkovich}. As a set, $X^\an$ is the disjoint union, over all points $x' \in X$, of the set of valuations on $k(x')$ that extend the trivial valuation on $k$. For each valuation $x \in X^\an$ of $k(x')$ and $f$ a regular function in a neighborhood of $x'$ in $X$, we will let $\val(f(x)) \in \R \cup \{\infty\}$ denote the value obtained by evaluating the image of $f$ in $k(x')$ at the valuation $x$. The topology on $X^\an$ is the coarsest topology such that for each affine open $U \subset X$ with coordinate ring $A$ and $f \in A$, the set $U^\an$ is open and the function $U^\an \to \R \cup \{\infty\}: x \mapsto \val(f(x))$ is continuous. 

If $R$ is a rank 1 valuation ring, then any map $\Spec(R) \to X$ gives a valuation on $k(x')$, where $x'$ is the image of the generic point of $\Spec(R)$. The subset of $X^\an$ consisting of such valuations, as we vary over valuation rings $R$ and maps $\Spec(R) \to X$, is denoted by $X^\beth$. Thus for each $x \in X^\beth$, we can refer to its generic point and special point.

This construction is functorial, so if $g: X \to Y$ is a morphism of finite type $k$-schemes, we get a continuous map $g^\an: X^\an \to Y^\an$. The next proposition follows from Propositions 3.1.3, 3.1.5, 3.4.1, 3.4.6 and 3.4.7 in \cite{Berkovich}.

\begin{proposition}[Berkovich]
If $g: X \to Y$ is a morphism of finite type $k$-schemes, then $g$ is separated (resp. proper) if and only if $g^\an$ is separated (resp. proper) as a map of topological spaces.
\end{proposition}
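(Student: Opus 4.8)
The plan is to reduce everything to the analytification functor's compatibility with the standard properties of morphisms, as established in \cite{Berkovich}, and then to translate the resulting statements about morphisms of $k$-analytic spaces into statements about maps of topological spaces. The ingredients I would extract from Propositions 3.1.3, 3.1.5, 3.4.1, 3.4.6 and 3.4.7 of \cite{Berkovich} are: that $X \mapsto X^\an$ sends open (resp.\ closed) immersions to open (resp.\ closed) immersions and closed morphisms to closed morphisms, and reflects each of these properties; that it is compatible with fibre products up to the canonical continuous surjection $(X \times_Y X)^\an \twoheadrightarrow X^\an \times_{Y^\an} X^\an$ and, more generally, with base change up to the analogous surjections; and that a morphism $g$ of finite type $k$-schemes is separated, resp.\ proper, if and only if $g^\an$ is separated, resp.\ proper, as a morphism of $k$-analytic spaces. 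Granting these, the only thing left is to identify, for $k$-analytic spaces, the analytic notions of separated and proper with the topological ones on underlying spaces.

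For separatedness I would argue as follows. A morphism of $k$-analytic spaces is separated precisely when its relative diagonal is a closed immersion; on underlying topological spaces this means that the (automatically embedded) relative diagonal $\delta\colon X^\an \to X^\an \times_{Y^\an} X^\an$ has closed image, which is exactly the condition that $g^\an$ be separated as a map of topological spaces. Here the canonical surjection above is what identifies $\delta$ with the analytification of $\Delta_g$, and the equivalence in both directions rests on analytification preserving and reflecting ``closed immersion'' and ``monomorphism''.

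For properness I would use that, since $g$ is of finite type, $g$ is proper if and only if $g$ is separated and universally closed, and correspondingly --- with the standard convention for maps of topological spaces --- $g^\an$ is proper if and only if it is separated and universally closed. Separatedness having been treated, I would handle universal closedness by noting that analytification preserves and reflects closed morphisms and is compatible with base change via the canonical surjections, so that a topological base change of $g^\an$ is closed if and only if the corresponding scheme base change of $g$ is, whence $g$ is universally closed if and only if $g^\an$ is. Combining this with the separatedness step yields the proposition.

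The step I expect to be the main obstacle is the bookkeeping around fibre products: analytification does not commute with fibre products on the nose, only up to the canonical continuous surjections, so each translation of a scheme-level condition (closedness of the diagonal, closedness of base changes) into a topological condition on $g^\an$ must be pushed through surjectivity of those maps together with the fact that analytification both preserves and reflects closed immersions and closed morphisms; getting all of these directions to line up cleanly, rather than just the ``easy'' implication, is where the care is needed. A lesser point is reconciling Berkovich's definition of a proper morphism of analytic spaces with bare topological properness of the underlying map, which I would sidestep via the reduction to ``separated and universally closed'' that is available because the morphisms in play are of finite type.
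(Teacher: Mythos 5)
The paper does not actually prove this proposition: it is quoted from Berkovich, and the intended argument is a direct citation chase. Concretely, Berkovich's Propositions 3.4.6--3.4.7 say that $g$ is separated (resp.\ proper) if and only if $g^\an$ is separated (resp.\ proper) \emph{as a morphism of analytic spaces}, Proposition 3.4.1 says that analytifications of scheme morphisms are ``closed'' in Berkovich's sense (empty relative boundary), and Propositions 3.1.3 and 3.1.5 identify analytic separatedness and properness of boundaryless morphisms with the purely topological notions (closed image of the diagonal; preimages of compacts are compact). Your sketch instead re-derives the analytic-to-topological translation by hand via diagonals and universal closedness. That is a legitimately different route, but it is also where your argument develops a real gap.

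The gap is in the universal-closedness step. ``Universally closed as a map of topological spaces'' quantifies over base changes along \emph{arbitrary} continuous maps $Z \to Y^\an$ from arbitrary topological spaces, whereas your ingredients only control base changes along analytifications of scheme morphisms $Y' \to Y$. So the implication ``$g$ universally closed $\Rightarrow$ every topological base change of $g^\an$ is closed'' does not follow from what you have assembled; checking the analytified scheme base changes is strictly weaker. The standard repair is Bourbaki's theorem that a continuous map is universally closed if and only if it is closed with quasi-compact fibres: this reduces the problem to closedness of $g^\an$ itself together with compactness of the fibres $(g^\an)^{-1}(y)$, and the latter is exactly where one needs Berkovich's compactness theorem for analytifications of proper schemes, applied to the scheme-theoretic fibres of $g$ over the completed residue fields of points $y \in Y^\an$. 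A parallel (smaller) soft spot occurs in your separatedness step: to pass closedness of the image of the diagonal back and forth across the canonical surjection $(X\times_Y X)^\an \twoheadrightarrow X^\an\times_{Y^\an}X^\an$ you need that surjection to be a closed (indeed compact) map, and in the reverse direction the preimage of $\delta(X^\an)$ need not equal the image of $\Delta_g^\an$, so reflecting separatedness requires a separate argument (e.g.\ the valuative criterion). None of these points is fatal to the statement, but as written the plan does not close, and the shortest correct proof is the one the citation already encodes: use Berkovich's identification of analytic separatedness/properness with the topological notions for boundaryless morphisms rather than re-proving it.
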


Let $U \subset X$ be a toroidal embedding without self-intersection, and let $\cC(X \setminus U)$ be the set of irreducible components of $X \setminus U$. We will consider the stratification of $X$ given by the components of
\[
	\left(\bigcap_{D \in \cS} D \right)\setminus \left(\bigcup_{E \in \cC(X \setminus U) \setminus \cS}E\right)
\]
for each $\cS \subset \cC(X \setminus U)$.
For each stratum $Y$ of the toroidal embedding, let $\Star Y$ denote the open set
\[
	\Star Y = \bigcup_{\text{strata $Z$ such that $Y \subset \overline{Z}$}} Z,
\]
let $M^Y$ be the group of Cartier divisors on $\Star Y$ that are supported on $\Star Y \setminus U$, let $N^Y = (M^Y)^\vee$, and let $\sigma^Y \subset N^Y_\R$ be the cone consisting of elements that are nonnegative on all effective divisors in $M^Y$. Let $\Sigma = (|\Sigma|, (\sigma^Y, M^Y)_Y)$ be the cone complex with integral structure associated to $U \subset X$, as defined in \cite{KKMS}.

For each stratum $Y$, we have a map $\trop_{\Star Y}: U^\an \cap (\Star Y)^\beth \to \sigma^Y$ defined so that for each Cartier divisor $m \in M^Y$ and each $x \in U^\an \cap (\Star Y)^\beth$,
\[
	\langle m, \trop_{\Star Y}(x) \rangle = \val(f(x)),
\]
where $f$ is a local equation for $m$ in a neighborhood of the special point of $x$. It is easy to check that this definition is independent of the choice of local equation $f$. These maps glue to give a continuous, surjective, and proper map
\[
	\trop_X: U^\an \cap X^\beth \to |\Sigma|.
\]

Now let $T$ be an algebraic torus with co-character lattice $N$. We also have a continuous, surjective, and proper tropicalization map 
\[
	\trop: T^\an \to N_\R,
\]
defined so that for all $u \in M$ and $x \in T^\an$,
\[
	\langle u, \trop(x) \rangle = \val(\chi^u(x)).
\]

\subsection{Geometric Tropicalization}

Let $U \subset X$ be a toroidal embedding without self-intersection, and let $\Sigma = (|\Sigma|, (\sigma^Y, M^Y)_Y)$ be the cone complex with integral structure associated to $U \subset X$, as above.

Let $T$ be an algebraic torus with character lattice $M$, let $N = M^\vee$, and let $g: U \to T$ be a morphism. For each stratum $Y$ of the toroidal embedding, there is a morphism of lattices $M \to M^Y: u \mapsto \divr(g^*(\chi^u))$. This gives a map $\sigma^Y \to N_\R$, and we glue these maps to get a geometric tropicalization map
\[
	\gtrop: |\Sigma| \to N_\R.
\]
It is clear from the definitions that
\[
	\trop \circ g^\an|_{U^\an \cap X^\beth} = \gtrop \circ \trop_X: U^\an \cap X^\beth \to N_\R.
\]
We will use the following elementary proposition.
\begin{proposition}
\label{smoothgtrop}
Suppose that $X$ is smooth and that $g: U \to T$ is a closed immersion. Let $\cC(X \setminus U)$ be the set of irreducible components of $X \setminus U$, let $\cS \subset \cC(X \setminus U)$, let
\[
	Y_\cS = \left(\bigcap_{D \in \cS} D\right) \setminus \left(\bigcup_{E \in \cC(X \setminus U) \setminus \cS} E\right),
\]
and let $\varphi^\cS: \Z^\cS \to N$ be the map of lattices such that for each $D \in \cS$, the standard basis vector of $\Z^\cS$ associated to $D$ is sent to $\val_D|_M \in N$. Then if $\bigcap_{D \in \cS} D \neq \emptyset$,
\begin{enumerate}[(a)]

\item $Y_\cS$ is nonempty,

\item and if $Y$ is a component of $Y_\cS$, there exists an isomorphism $\psi: N^Y \xrightarrow{\sim} \Z^\cS$ identifying $\sigma^Y$ with $\R_{\geq 0}^\cS$ and identifying the map $\gtrop|_{\sigma^Y}$ with $\varphi^\cS_\R|_{\R_{\geq 0}^\cS}$.

\end{enumerate}
\end{proposition}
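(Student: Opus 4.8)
My approach is to unwind the definitions of $\Star Y$, $M^Y$, $\sigma^Y$ and of the geometric tropicalization map recalled above, the only genuinely geometric ingredient being the transversality of a simple normal crossing divisor. I would first isolate that ingredient as follows: since $X$ is smooth and $X \setminus U$ is simple normal crossing, for every $\cS \subset \cC(X \setminus U)$ with $\bigcap_{D \in \cS} D \neq \emptyset$ the intersection $\bigcap_{D \in \cS} D$ is smooth of pure codimension $|\cS|$, and no irreducible component of it is contained in any $E \in \cC(X \setminus U) \setminus \cS$; indeed, such a component has codimension $|\cS|$, while $\bigcap_{D \in \cS \cup \{E\}} D$ is smooth of pure codimension $|\cS| + 1$, so cannot contain it. This gives part (a) at once: $Y_\cS = (\bigcap_{D \in \cS} D) \setminus (\bigcup_{E \notin \cS} E)$ is a dense open subset of $\bigcap_{D \in \cS} D$ that meets every component of the latter, so it is nonempty, and its irreducible components are exactly the sets $Z \setminus (\bigcup_{E \notin \cS} E)$ for $Z$ a component of $\bigcap_{D \in \cS} D$. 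In particular, if $Y$ is a component of $Y_\cS$, then $\overline{Y}$ is a component of $\bigcap_{D \in \cS} D$, so the members of $\cC(X \setminus U)$ containing $\overline{Y}$ are precisely those in $\cS$.

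For part (b), fix a component $Y$ of $Y_\cS$. I would first check that $\Star Y$ is a smooth open subscheme of $X$ containing $U$ and $Y$ with $\Star Y \cap E = \emptyset$ for every $E \in \cC(X \setminus U) \setminus \cS$: if a point $p$ of such an $E$ lay in a stratum $Z$ with $Y \subset \overline{Z}$, then $E$ would contain $\overline{Z}$ and hence $\overline{Y}$, contradicting the previous paragraph. Therefore $\Star Y \setminus U = \bigcup_{D \in \cS} (D \cap \Star Y)$, and each $D \cap \Star Y$ with $D \in \cS$ is a nonempty open, hence irreducible, subset of the irreducible divisor $D$, of codimension one in $\Star Y$. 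Since $\Star Y$ is smooth, $M^Y$ is the free abelian group on the distinct prime divisors $[D \cap \Star Y]$, $D \in \cS$, so there is an isomorphism $M^Y \cong \Z^\cS$ matching $[D \cap \Star Y]$ with the standard basis vector associated to $D$; dualizing gives $\psi \colon N^Y \xrightarrow{\sim} \Z^\cS$ sending the basis vector dual to $[D \cap \Star Y]$ to that standard basis vector. Because the effective elements of $M^Y$ are exactly the nonnegative combinations of the $[D \cap \Star Y]$, the cone $\sigma^Y$ of functionals nonnegative on all effective divisors is carried by $\psi$ onto $\R_{\geq 0}^\cS$.

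It remains to match $\gtrop|_{\sigma^Y}$ with $\varphi^\cS_\R|_{\R_{\geq 0}^\cS}$ under $\psi$. By construction, $\gtrop|_{\sigma^Y}$ is the restriction to $\sigma^Y$ of the $\R$-linear dual of the lattice map $M \to M^Y$, $u \mapsto \divr(g^* \chi^u)$. Since $g^*\chi^u$ is invertible on $U$, its divisor on $\Star Y$ is $\sum_{D \in \cS} \langle u, \val_D|_M \rangle\, [D \cap \Star Y]$, so the dual map $N^Y \to N$ sends a functional $v$ to $\sum_{D \in \cS} \langle [D \cap \Star Y], v \rangle\, \val_D|_M$; in particular it sends the basis vector dual to $[D \cap \Star Y]$ to $\val_D|_M$. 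Comparing with the definition of $\varphi^\cS$, this dual map is precisely $\varphi^\cS \circ \psi$, which is the asserted identification. The only point requiring care is the description of $\Star Y \setminus U$ and the irreducibility of its components, i.e. the correct use of the ``without self-intersection'' hypothesis; once that is in place, the rest is bookkeeping with dual lattices.
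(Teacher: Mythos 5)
Your proof is correct and follows essentially the same route as the paper's: part (a) from the transversality of the simple normal crossing divisor, and part (b) by observing that smoothness makes $\{D \cap \Star Y \mid D \in \cS\}$ a basis of $M^Y$ and then dualizing. The paper states this in two lines; your write-up simply supplies the details (the codimension count, the identification of $\Star Y \setminus U$, and the computation of $\divr(g^*\chi^u)$ on $\Star Y$), all of which are accurate.
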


\begin{proof}
\begin{enumerate}[(a)]

\item Because $\bigcap_{D \in \cS} D \neq \emptyset$ and $X \setminus U$ is a simple normal crossing divisor, $Y_\cS$ is nonempty.

\item Because $X$ is smooth, the set of Cartier divisors $\{D \cap \Star Y \, | \, D \in \cS\}$ forms a basis for $M^Y$. Let $\{v_D \, | \, D \in \cS\}$ be its dual basis. Let $\psi: N^Y \to \Z^\cS$ be the map of lattices sending each $v_D$ to the standard basis vector of $\Z^\cS$ corresponding to $D \in \cS$. Then $\psi$ is an isomorphism, and it identifies $\varphi^\cS_\R|_{\R_{\geq 0}^\cS}$ with the map $\gtrop|_{\sigma^Y}$.

\end{enumerate}
\end{proof}

The notion of geometric tropicalization, in the case where $g$ is a closed immersion and $U \subset X$ is a compactification with simple normal crossing boundary, was introduced by Hacking, Keel, and Tevelev in \cite{HackingKeelTevelev}. In \cite{LuxtonQu}, Luxton and Qu observed that geometric tropicalization could also be defined when $U \subset X$ is a toroidal embedding without self-intersection.


\section{Geometric Tropicalization of Toroidal Embeddings}
\label{gtroptoroidalsection}

Let $U \subset X$ be a toroidal embedding without self-intersection. For each locally closed stratum $Y$ of the toroidal embedding, let $M^Y$ be the group of Cartier divisors on $\Star Y$ that are supported on $\Star Y \setminus U$, let $N^Y = (M^Y)^\vee$, and let $\sigma^Y \subset N^Y_\R$ be the cone consisting of elements that are nonnegative on all effective divisors in $M^Y$. Let $\Sigma = (|\Sigma|, (\sigma^Y, M^Y)_Y)$ be the cone complex with integral structure associated to $U \subset X$.

Let $T$ be an algebraic torus with character lattice $M$, let $N = M^\vee$, and let $g: U \to T$ be a morphism. As discussed in the preliminaries, we have tropicalization maps $\gtrop: |\Sigma| \to N_\R$, $\trop_X: U^\an \cap X^\beth \to |\Sigma|$, and $\trop:T^\an \to N_\R$.

In this section we study how the geometric tropicalization map is related to extending the map $g$ as well as the properness of $g$. In particular we will prove Theorem \ref{gtropproper}(\ref{gtropisproper}).

We first prove the following lemma.

\begin{lemma}
\label{toroidaltroppreimage}
For each stratum $Y$ of $X$, we have
\[
	 \trop_X^{-1}(\sigma^Y) = U^\an \cap (\Star Y)^\beth.
\]
\end{lemma}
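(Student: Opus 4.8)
The plan is to prove the two inclusions separately. The inclusion $U^\an \cap (\Star Y)^\beth \subseteq \trop_X^{-1}(\sigma^Y)$ is essentially formal: by construction $\trop_X$ is obtained by gluing the maps $\trop_{\Star Y}$, so its restriction to $U^\an \cap (\Star Y)^\beth$ agrees with $\trop_{\Star Y}$, whose image lies in $\sigma^Y$ viewed as a cone of the complex $\Sigma$.

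For the reverse inclusion, I would begin with $x \in U^\an \cap X^\beth$ such that $\trop_X(x) \in \sigma^Y$, represent $x$ by a morphism $h \colon \Spec R \to X$ with $R$ a rank $1$ valuation ring, and let $z$ be the image of the closed point and $x'$ the image of the generic point; thus $x' \in U$, and $z$ lies in some stratum $Z$. Since $z \in Z \subseteq \Star Z$ and $x' \in U \subseteq \Star Z$, we have $x \in U^\an \cap (\Star Z)^\beth$, so $\trop_X(x) = \trop_{\Star Z}(x)$. The goal then reduces to showing $Z \subseteq \Star Y$, equivalently $Y \subseteq \overline{Z}$, equivalently that $\sigma^Z$ is a face of $\sigma^Y$ in the cone complex.

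The key step is the sub-claim that $\trop_{\Star Z}(x)$ lies in the relative interior of $\sigma^Z$. To prove it I would take any effective Cartier divisor $m \in M^Z$ whose support is all of $\Star Z \setminus U$ (the case $Z = U$ is trivial, as then $\sigma^Z = \{0\}$). A local equation $f$ for $m$ near $z$ vanishes at $z$, since $z \in \Star Z \setminus U = \operatorname{Supp}(m)$, while $f$ is a unit on $U$. Pulling back along $h$ — which factors through every open neighborhood of $z$ because $\Spec R$ is local — the function $f$ becomes an element of the maximal ideal of $R$ which is nonzero (its value at $x' \in U$ is a unit), so $\val(f(x)) = \val_R(h^*f) > 0$. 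Hence $\langle m, \trop_{\Star Z}(x)\rangle > 0$ for every such $m$, and since the relative interior of $\sigma^Z$ is precisely the locus in $\sigma^Z$ on which all full-support effective divisors of $M^Z$ are positive (here one uses that $X$ is normal, so these effective divisors generate a pointed cone), the sub-claim follows. To finish, I would invoke the structure of the KKMS cone complex of a toroidal embedding without self-intersection: $|\Sigma|$ is an honest cone complex, so $|\Sigma| = \coprod_W \operatorname{relint}(\sigma^W)$ disjointly over strata $W$, and $\sigma^Y = \coprod_{W :\, Y \subseteq \overline{W}} \operatorname{relint}(\sigma^W)$. Since $\trop_X(x)$ lies in $\operatorname{relint}(\sigma^Z)$ and in $\sigma^Y$, it lies in $\operatorname{relint}(\sigma^W)$ for some $W$ with $Y \subseteq \overline{W}$, and global disjointness forces $W = Z$; thus $Y \subseteq \overline{Z}$, so $z \in Z \subseteq \Star Y$ and $x \in U^\an \cap (\Star Y)^\beth$.

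I expect the main obstacle to be the relative-interior sub-claim, and within it the small amount of convex geometry identifying $\operatorname{relint}(\sigma^Z)$ with the locus on which every full-support effective divisor is positive; the remaining part — reducing to this claim and then reading off that $\sigma^Z$ is a face of $\sigma^Y$ from the combinatorics of the cone complex — should be a formal matter once one unwinds the definitions from \cite{KKMS} recalled in the preliminaries.
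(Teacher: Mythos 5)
Your proposal is correct and follows essentially the same route as the paper: identify the stratum $Z$ containing the special point of $x$, show that $\trop_{\Star Z}(x)$ lies in the relative interior of $\sigma^Z$ because local equations of effective boundary divisors vanish at the special point, and then conclude $\sigma^Z \subset \sigma^Y$ (hence $\Star Z \subset \Star Y$) from the face combinatorics of the cone complex. The only cosmetic differences are that you phrase the last step via the disjoint decomposition of $|\Sigma|$ into relative interiors rather than via ``$\sigma^Y \cap \sigma^Z$ is a union of faces meeting the interior of $\sigma^Z$,'' and that you test positivity only against full-support effective divisors, whereas the paper tests against all nonzero effective ones (equivalent here, since every such divisor contains $Z$ in its support).
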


\begin{proof}
Suppose that $x \in \trop_X^{-1}(\sigma^Y)$ and suppose that $Z$ is the stratum containing the special point of $x$. Then $x \in (\Star Z)^\beth$ and for each nonzero effective $m \in M^Z$,
\[
	\langle m, \trop_{\Star Z}(x) \rangle > 0,
\]
so $\trop_X(x)$ is in the interior of $\sigma^Z$. Thus $\sigma^Y \cap \sigma^Z$ is a union of faces of $\sigma^Z$ that intersects the interior of $\sigma^Z$, so $\sigma^Z \subset \sigma^Y$. Therefore $\Star Z \subset \Star Y$ and we have that $x \in U^\an \cap (\Star Y)^\beth$.
\end{proof}

We now prove a proposition about extending the map $g$. We will use this proposition in Section \ref{constructibilityproof} to prove Theorem \ref{constructibilityformula}.

\begin{proposition}
\label{gtropindeterminacy}
Let $\Delta$ be a fan in $N_\R$, and let $X(\Delta)$ be its associated $T$-toric variety. Let $\Sigma_\Delta$ be the sub-complex of $\Sigma$ consisting of cones $\sigma^Y$ in $\Sigma$ for which there exists a cone $\sigma \in \Delta$ such that $\gtrop(\sigma^Y) \subset \sigma$, and let $X_\Delta$ be the open subvariety of $X$ given by the inclusion of $\Sigma_\Delta$ into $\Sigma$.

\begin{enumerate}[(a)]

\item \label{gtropindeterminacyextend} The map $g: U \to T$ can be extended to a map $g_\Delta: X_\Delta \to X(\Delta)$.

\item If the map $g_\Delta$ is proper, then $|\Sigma_\Delta| = \gtrop^{-1}(|\Delta|)$.

\item \label{gtropindeterminacyproper} If $X$ is complete and $|\Sigma_\Delta| = \gtrop^{-1}(|\Delta|)$, then the map $g_\Delta$ is proper.

\end{enumerate}

\end{proposition}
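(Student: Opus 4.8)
The plan is to handle the three parts in order, using the compatibility between tropicalization maps and analytification recorded in the preliminaries, together with the properness criterion of Berkovich.

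For part (\ref{gtropindeterminacyextend}), I would work cone by cone. For each stratum $Y$ with $\sigma^Y \in \Sigma_\Delta$, choose $\sigma \in \Delta$ with $\gtrop(\sigma^Y) \subset \sigma$. The dual description of $\gtrop|_{\sigma^Y}$ via $M \to M^Y: u \mapsto \divr(g^*\chi^u)$ shows that for every $u \in \sigma^\vee \cap M$, the divisor $\divr(g^*\chi^u)$ on $\Star Y$ is effective away from $U$, i.e. $g^*\chi^u$ is a regular function on $\Star Y$. Hence $g$ restricted to $\Star Y$ factors through the affine toric chart $\Spec k[\sigma^\vee \cap M] \subset X(\Delta)$. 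These locally-defined extensions agree on overlaps because they all extend the fixed map $g$ on the dense open $U$ and $X_\Delta$ is separated and integral, so they glue to $g_\Delta: X_\Delta \to X(\Delta)$.

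For parts (b) and (c), the key identity is $\trop \circ g^\an|_{U^\an \cap X^\beth} = \gtrop \circ \trop_X$, combined with Lemma \ref{toroidaltroppreimage}, which gives $\trop_X^{-1}(|\Sigma_\Delta|) = U^\an \cap X_\Delta^\beth$, and the fact (stated in the preliminaries) that $\trop_X: U^\an \cap X^\beth \to |\Sigma|$ and $\trop: T^\an \to N_\R$ are surjective and proper. For (b): if $g_\Delta$ is proper, then $g_\Delta^\an: X_\Delta^\an \to X(\Delta)^\an$ is proper, so it maps $X_\Delta^\beth$ into $X(\Delta)^\beth$, and chasing a point $x \in \gtrop^{-1}(|\Delta|)$: pick $\tilde x \in U^\an \cap X^\beth$ with $\trop_X(\tilde x) = x$ (surjectivity of $\trop_X$), then $\trop(g^\an(\tilde x)) = \gtrop(x) \in |\Delta|$, so $g^\an(\tilde x)$ lies in $X(\Delta)^\beth$; properness of $g_\Delta^\an$ (equivalently, valuative criterion applied to $\Spec R \to X(\Delta)$ lifting $\tilde x$'s generic point) forces $\tilde x \in X_\Delta^\beth$, whence $x = \trop_X(\tilde x) \in \trop_X(U^\an \cap X_\Delta^\beth) = |\Sigma_\Delta|$; the reverse inclusion $|\Sigma_\Delta| \subset \gtrop^{-1}(|\Delta|)$ is immediate from the definition of $\Sigma_\Delta$. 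For (c): when $X$ is complete, $X_\Delta^\an$ is compact, and one checks properness of $g_\Delta$ via the valuative criterion; an $R$-point of $X(\Delta)$ whose generic point comes from $X_\Delta$ corresponds to a point of $X(\Delta)^\beth$ in the image, and the hypothesis $|\Sigma_\Delta| = \gtrop^{-1}(|\Delta|)$ together with the displayed compatibility guarantees that the corresponding point of $X^\beth$ (obtained by the valuative criterion applied to the complete $X$) already lies over $X_\Delta$, so the extension exists inside $X_\Delta$; separatedness of $g_\Delta$ is automatic since $X_\Delta$ is separated.

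The main obstacle I anticipate is part (c): making the valuative-criterion argument rigorous requires carefully translating between $\Spec R$-points (for $R$ a rank-one valuation ring), points of $X^\beth$ with their generic and special points, and the combinatorial condition $|\Sigma_\Delta| = \gtrop^{-1}(|\Delta|)$. The subtlety is that a limit point supplied by completeness of $X$ might a priori land outside $X_\Delta$, and ruling this out is exactly where the hypothesis on supports is used: the special point of the arc lies in some stratum $Z$, its image under $\trop_X$ lies in the relative interior of $\sigma^Z$, and one must argue that $\gtrop(\sigma^Z) \subset |\Delta|$ forces $\gtrop(\sigma^Z)$ into a single cone of $\Delta$ (using that $\gtrop|_{\sigma^Z}$ is linear and $\Delta$ is a fan), hence $\sigma^Z \in \Sigma_\Delta$ and $Z \subset X_\Delta$. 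Handling the non-pointed or lower-dimensional cases of this linearity argument, and confirming that "image contained in $|\Delta|$" upgrades to "image contained in a cone of $\Delta$" for a linear map, is the technical heart of the proof.
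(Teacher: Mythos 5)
Your proposal is correct and follows essentially the same route as the paper: part (a) extends $g$ chart by chart using that $\gtrop(\sigma^Y)\subset\sigma$ makes $g^*(\chi^u)$ regular on $\Star Y$ for all $u\in\sigma^\vee\cap M$, and parts (b) and (c) chase points of $U^\an$ through the identity $\trop\circ g^\an|_{U^\an\cap X^\beth}=\gtrop\circ\trop_X$, the surjectivity and properness of $\trop_X$, and Lemma \ref{toroidaltroppreimage}. The only remark worth making is that the ``technical heart'' you anticipate in (c) is a non-issue: the hypothesis $|\Sigma_\Delta|=\gtrop^{-1}(|\Delta|)$ already places $\trop_X(x)$ inside some single cone $\sigma^Y\in\Sigma_\Delta$, and Lemma \ref{toroidaltroppreimage} then gives $x\in U^\an\cap(\Star Y)^\beth\subset U^\an\cap X_\Delta^\beth$ directly, so no upgrade from ``image contained in $|\Delta|$'' to ``image contained in a single cone of $\Delta$'' is ever required.
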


\begin{proof}
\begin{enumerate}[(a)]

\item Let $\sigma^Y$ be a cone in $\Sigma_\Delta$, let $\sigma \in \Delta$ such that $\gtrop(\sigma^Y) \subset \sigma$, and let $X(\sigma)$ be its associated affine $T$-toric variety. The condition $\gtrop(\sigma^Y) \subset \sigma$ guarantees that for all $u \in \sigma^\vee \cap M$, the rational function $g^*(\chi^u)$ is regular on $\Star Y$, so the map $g: U \to T$ extends to a map $\Star Y \to X(\sigma) \subset X(\Delta)$. These maps glue to give
\[
	g_\Delta: X_\Delta \to X(\Delta).
\]

\item Suppose that $g_\Delta$ is proper. Let $x \in U^\an \cap X^\beth$ such that $\trop_X(x) \in \gtrop^{-1}(|\Delta|)$. Then $g^\an(x) \in T^\an \cap X(\Delta)^\beth$. Thus because $g_\Delta$ is proper, $x \in U^\an \cap X_\Delta^\beth$. Therefore
\[
	\trop_X(x) \in |\Sigma_\Delta|. 
\]
The surjectivity of $\trop_X$ gives that
\[
	|\Sigma_\Delta| = \gtrop^{-1}(|\Delta|).
\]

\item Suppose that $X$ is complete and $|\Sigma_\Delta| = \gtrop^{-1}(|\Delta|)$. Let $x \in U^\an$ such that $g^\an(x) \in T^\an \cap X(\Delta)^\beth$. Because $X$ is complete, $x \in U^\an \cap X^\beth$. Then
\[
	\gtrop(\trop_X(x)) = \trop(g^\an(x)) \in |\Delta|,
\]
so
\[
	\trop_X(x) \in \gtrop^{-1}(|\Delta|) = |\Sigma_\Delta|.
\]
Then by Lemma \ref{toroidaltroppreimage}, there exists a stratum $Y$ such that $\sigma^Y$ is a cone in $\Sigma_\Delta$ and $x \in U^\an \cap (\Star Y)^\beth$, so
\[
	x \in U^\an \cap X_\Delta^\beth.
\]
Thus $g_\Delta$ is proper by the valuative criterion.

\end{enumerate}
\end{proof}

We now prove the following statement, from which Theorem \ref{gtropproper}(\ref{gtropisproper}) will follow.

\begin{proposition}
\label{generalgtropproper}
\begin{enumerate}[(a)]

\item If $X$ is complete and $\gtrop: |\Sigma| \to N_\R$ is proper as a map of topological spaces, then $g: U \to T$ is proper.

\item \label{generalgtropisproper} If $g: U \to T$ is proper, then $\gtrop: |\Sigma| \to N_\R$ is proper as a map of topological spaces.

\end{enumerate}
\end{proposition}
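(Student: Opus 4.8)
The approach is to pass back and forth between properness of $g$ and properness of $\gtrop$ using only the identity recorded in the preliminaries,
\[
	\trop\circ g^\an|_{U^\an\cap X^\beth}=\gtrop\circ\trop_X,
\]
together with the facts that $\trop_X\colon U^\an\cap X^\beth\to|\Sigma|$ and $\trop\colon T^\an\to N_\R$ are continuous, surjective and proper, and Berkovich's criterion that $g$ is proper if and only if $g^\an$ is proper as a map of topological spaces. I read ``proper'' as ``the preimage of every compact set is compact'' (as in the preliminaries), and I use only that continuous images of compact sets are compact, that closed subsets of compact sets are compact, and that $T^\an$ is Hausdorff. The one delicate point, which I expect to be the main obstacle, is the mismatch between the source $U^\an$ of $g^\an$ and the source $U^\an\cap X^\beth$ of $\trop_X$; it is handled by completeness of $X$ in part~(a) and by the closedness of $X^\beth$ in $X^\an$ in part~(b).

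For part~(a), completeness of $X$ gives $X^\beth=X^\an$ (since $X$ is proper, every Berkovich point of $X$ lies in $X^\beth$ by the valuative criterion), so $U^\an\cap X^\beth=U^\an$ and the displayed identity becomes an equality $\trop\circ g^\an=\gtrop\circ\trop_X$ of maps $U^\an\to N_\R$. I would then show $g^\an$ is proper and invoke Berkovich's criterion. Given a compact $K\subseteq T^\an$, the set $\trop(K)$ is compact, hence $\gtrop^{-1}(\trop(K))$ is compact since $\gtrop$ is proper, hence $\trop_X^{-1}(\gtrop^{-1}(\trop(K)))$ is a compact subset of $U^\an$ since $\trop_X$ is proper; by the displayed identity this compact set contains $(g^\an)^{-1}(K)$, which is closed in $U^\an$ (as $K$ is closed in $T^\an$) and hence compact. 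Thus $g^\an$ is proper, so $g$ is proper.

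For part~(b), Berkovich's criterion gives that $g^\an$ is proper. Given a compact $K\subseteq N_\R$, I want $\gtrop^{-1}(K)$ to be compact. The set $\trop^{-1}(K)\subseteq T^\an$ is compact since $\trop$ is proper, so $(g^\an)^{-1}(\trop^{-1}(K))$ is a compact subset of $U^\an$ since $g^\an$ is proper. Since $X^\beth$ is closed in $X^\an$ — a standard fact, e.g.\ because $X^\beth$ is the preimage of $X$ under the anticontinuous reduction map of a Nagata compactification of $X$ — the intersection $C=(g^\an)^{-1}(\trop^{-1}(K))\cap X^\beth$ is a closed subset of a compact set, hence compact. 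Using the displayed identity and the surjectivity of $\trop_X$ one checks $\gtrop^{-1}(K)=\trop_X(C)$: for $\subseteq$, write $q\in\gtrop^{-1}(K)$ as $\trop_X(x)$ with $x\in U^\an\cap X^\beth$ and note $\trop(g^\an(x))=\gtrop(q)\in K$ forces $x\in C$; the inclusion $\supseteq$ is immediate from the identity. Therefore $\gtrop^{-1}(K)=\trop_X(C)$ is a continuous image of a compact set, hence compact, and $\gtrop$ is proper. The only point here that is not purely formal is the closedness of $X^\beth$ in $X^\an$, which is precisely what makes restricting to $U^\an\cap X^\beth$ behave well.
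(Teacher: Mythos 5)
Your proof is correct and follows essentially the same route as the paper: both hinge on the identity $\trop\circ g^\an|_{U^\an\cap X^\beth}=\gtrop\circ\trop_X$, Berkovich's criterion, the properness and surjectivity of $\trop_X$ and $\trop$, completeness of $X$ in part (a), and the closedness of $X^\beth$ in part (b). The only difference is cosmetic: where the paper cites standard stability properties of proper maps (composition, restriction to closed subsets, descent along surjections, cancellation against a separated map), you unwind these into explicit compact-set chasing.
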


\begin{proof}
\begin{enumerate}[(a)]

\item Suppose that $X$ is complete and $\gtrop: |\Sigma| \to N_\R$ is proper. Because $X$ is complete, $U^\an \cap X^\beth = U^\an$, so
\[
	\trop \circ g^\an = \gtrop \circ \trop_X: U^\an \to N_\R.
\]
Then because $\trop_X$ and $\gtrop$ are proper, we have that $\trop \circ g^\an$ is proper, so because $\trop$ is separated, we have that $g^\an$ is proper. Therefore $g$ is proper.

\item Suppose that $g: U \to T$ is proper. Then $\trop \circ g^\an$ is proper. Because $U^\an \cap X^\beth$ is closed in $U^\an$, we have that $\gtrop \circ \trop_X = \trop \circ g^\an|_{U^\an \cap X^\beth}$ is proper. Then because $\trop_X$ is surjective, this implies that $\gtrop$ is proper.

\end{enumerate}
\end{proof}

We now prove the following corollary, which is a restatement of Theorem \ref{gtropproper}(\ref{gtropisproper}).

\begin{corollary}
Suppose that $X$ is smooth and that $g: U \to T$ is a closed immersion.

Let $\cS$ be a collection of irreducible components of $X \setminus U$, and let $\varphi^\cS: \Z^\cS \to N$ be the map of lattices such that for each $D \in \cS$, the standard basis vector of $\Z^\cS$ associated to $D$ is sent to $\val_D|_M \in N$.

Then if $\bigcap_{D \in \cS} D \neq \emptyset$, the map $\varphi^\cS_\R|_{\R_{\geq 0}^\cS}: \R_{\geq 0}^\cS \to N_\R$ is proper as a map of topological spaces.
\end{corollary}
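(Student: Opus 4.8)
The plan is to deduce this from the two results already established in this section: the properness of geometric tropicalization in Proposition \ref{generalgtropproper}, and the local identification of $\gtrop$ with $\varphi^\cS_\R$ in Proposition \ref{smoothgtrop}.

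First I would observe that a closed immersion is a proper morphism, so the hypothesis that $g\colon U \to T$ is a closed immersion lets us apply Proposition \ref{generalgtropproper}(\ref{generalgtropisproper}): the geometric tropicalization map $\gtrop\colon |\Sigma| \to N_\R$ associated to $U \subset X$ and $g$ is proper as a map of topological spaces. Note that no completeness hypothesis on $X$ is needed for this direction.

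Next, since $X$ is smooth and $\bigcap_{D \in \cS} D \neq \emptyset$, Proposition \ref{smoothgtrop} provides a component $Y$ of $Y_\cS = \bigl(\bigcap_{D\in\cS}D\bigr) \setminus \bigl(\bigcup_{E \notin \cS} E\bigr)$ together with an isomorphism $\psi\colon N^Y \xrightarrow{\ \sim\ } \Z^\cS$ identifying $\sigma^Y$ with $\R_{\geq 0}^\cS$ and identifying $\gtrop|_{\sigma^Y}$ with $\varphi^\cS_\R|_{\R_{\geq 0}^\cS}$. Since $\sigma^Y$ is one of the cones of the cone complex $\Sigma$, it is a closed subset of $|\Sigma|$; hence $\gtrop|_{\sigma^Y}$, being the restriction of the proper map $\gtrop$ to a closed subset of its source, is again proper. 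Transporting along $\psi$, the map $\varphi^\cS_\R|_{\R_{\geq 0}^\cS}\colon \R_{\geq 0}^\cS \to N_\R$ is proper, as desired.

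Since the substance has been isolated in Propositions \ref{generalgtropproper} and \ref{smoothgtrop}, I do not expect a genuine obstacle; this is essentially bookkeeping. The two points worth stating explicitly are that a closed immersion is proper (so that Proposition \ref{generalgtropproper}(\ref{generalgtropisproper}) is available with no assumption that $X$ be complete) and that the restriction of a proper map of (locally compact Hausdorff) topological spaces to a closed subspace of its domain is proper (so that we may pass from $\gtrop$ to $\gtrop|_{\sigma^Y}$). If one wished to avoid even the latter, one could instead rerun the proof of Proposition \ref{generalgtropproper}(\ref{generalgtropisproper}) starting from the identity $\trop_X^{-1}(\sigma^Y) = U^\an \cap (\Star Y)^\beth$ of Lemma \ref{toroidaltroppreimage}, obtaining properness of $\gtrop|_{\sigma^Y}$ directly, but the closed-subset argument is cleaner.
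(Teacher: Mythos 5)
Your argument is correct and is exactly the paper's proof: the paper simply cites Propositions \ref{smoothgtrop} and \ref{generalgtropproper}(\ref{generalgtropisproper}), and you have filled in the same two elementary facts (a closed immersion is proper, and the restriction of a proper map to the closed subset $\sigma^Y \subset |\Sigma|$ is proper) that the paper leaves implicit.
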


\begin{proof}
This follows from Propositions \ref{smoothgtrop} and \ref{generalgtropproper}(\ref{generalgtropisproper}).
\end{proof}

\section{Volumes of Fibers and Geometric Tropicalization}
\label{constructibilityproof}

In this section, we will prove Theorem \ref{constructibilityformula}.

Let $T$ be an algebraic torus with character lattice $M$, let $N = M^\vee$, let $\Delta$ be a fan in $N$, and let $X(\Delta)$ be its associated $T$-toric variety. Let $X \hookrightarrow X(\Delta)$ be a smooth closed subvariety such that $U = X \cap T$ is nonempty, let $U \subset \widetilde{X}$ be an open immersion into a complete smooth variety $\widetilde{X}$ such that $\widetilde{X} \setminus U$ is a simple normal crossing divisor and such that $U \subset \widetilde{X}$ has $\Delta$-compatible geometric tropicalization with respect to $U \hookrightarrow T$. 

Let $\cC(\widetilde{X} \setminus U)$ be the set of irreducible components of $\widetilde{X} \setminus U$, and set
\[
	\widetilde{X}_\Delta = \widetilde{X} \setminus \bigcap_{\sigma \in \Delta} \bigcup_{\substack{D \in \cC(\widetilde{X} \setminus U) \\ \val_D|_M \notin \sigma}} D.
\]

Note that for each $D \in \cC(\widetilde{X} \setminus U)$, we have that $\val_D|_M \in |\Delta|$ if and only if $D \cap \widetilde{X}_\Delta \neq \emptyset$.

We first observe the following.

\begin{proposition}
The inclusion of $U$ into $X$ extends to a proper map $\widetilde{X}_\Delta \to X$.
\end{proposition}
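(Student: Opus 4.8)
The plan is to identify the scheme $\widetilde{X}_\Delta$ defined above with the open subvariety ``$X_\Delta$'' furnished by Proposition \ref{gtropindeterminacy}, applied to the toroidal embedding $U \subset \widetilde{X}$ (which has no self-intersection, since $\widetilde{X} \setminus U$ is a simple normal crossing divisor) and to the closed immersion $U \hookrightarrow T$, and then to read off the result from parts (\ref{gtropindeterminacyextend}) and (\ref{gtropindeterminacyproper}) of that proposition together with a cancellation argument. Concretely, I would write $\Sigma = (|\Sigma|, (\sigma^Y, M^Y)_Y)$ for the cone complex with integral structure of $U \subset \widetilde{X}$, $\gtrop \colon |\Sigma| \to N_\R$ for the associated geometric tropicalization map, $\Sigma_\Delta$ for the subcomplex of cones $\sigma^Y$ whose image $\gtrop(\sigma^Y)$ lies in some single cone of $\Delta$, and $X_{\Sigma_\Delta} \subset \widetilde{X}$ for the corresponding open subvariety. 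The first step is to check that $\widetilde{X}_\Delta = X_{\Sigma_\Delta}$. A point $p$ of $\widetilde{X}$ lies in some stratum $Y$, and with $\cS = \{D \in \cC(\widetilde{X} \setminus U) \mid p \in D\}$ we have $p \in \bigcap_{D \in \cS} D$, so Proposition \ref{smoothgtrop} identifies $\gtrop(\sigma^Y)$ with $\pos(\val_D|_M \mid D \in \cS)$. Since the cones of $\Delta$ are convex, the condition that some $\sigma \in \Delta$ contain $\gtrop(\sigma^Y)$ is equivalent to the condition that some $\sigma \in \Delta$ contain every $\val_D|_M$ with $D \in \cS$, which by definition of $\widetilde{X}_\Delta$ is exactly the condition $p \in \widetilde{X}_\Delta$, and which is also exactly the condition $\sigma^Y \in \Sigma_\Delta$, i.e.\ $p \in X_{\Sigma_\Delta}$. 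Proposition \ref{gtropindeterminacy}(\ref{gtropindeterminacyextend}) then extends $U \hookrightarrow T$ to a morphism $g_\Delta \colon \widetilde{X}_\Delta \to X(\Delta)$.

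The one nonformal step, and the only place the compatibility hypothesis enters, is the identity $|\Sigma_\Delta| = \gtrop^{-1}(|\Delta|)$, which is the hypothesis needed to apply Proposition \ref{gtropindeterminacy}(\ref{gtropindeterminacyproper}). The inclusion $\subseteq$ is immediate from the definition of $\Sigma_\Delta$. For $\supseteq$, given $v \in \gtrop^{-1}(|\Delta|)$ I would choose a cone $\sigma^Y$ of $\Sigma$ containing $v$, use Proposition \ref{smoothgtrop} to identify $\sigma^Y$ with $\R_{\geq 0}^\cS$ and $\gtrop|_{\sigma^Y}$ with $\varphi^\cS_\R|_{\R_{\geq 0}^\cS}$, and pick $\sigma \in \Delta$ with $\gtrop(v) \in \sigma$. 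Then $v$ lies in $\tau := (\varphi^\cS_\R)^{-1}(\sigma) \cap \R_{\geq 0}^\cS$, which is a \emph{face} of $\R_{\geq 0}^\cS$ precisely because $U \subset \widetilde{X}$ has $\Delta$-compatible geometric tropicalization with respect to $U \hookrightarrow T$; this face corresponds to a face $\sigma^{Y'}$ of $\sigma^Y$ in $\Sigma$, and $\gtrop(\sigma^{Y'}) = \varphi^\cS_\R(\tau) \subset \sigma$, so $\sigma^{Y'} \in \Sigma_\Delta$ and $v \in \sigma^{Y'} \subset |\Sigma_\Delta|$. Since $\widetilde{X}$ is complete, Proposition \ref{gtropindeterminacy}(\ref{gtropindeterminacyproper}) then shows $g_\Delta \colon \widetilde{X}_\Delta \to X(\Delta)$ is proper.

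It remains to descend this to $X$. The restriction of $g_\Delta$ to $U$ is the closed immersion $U \hookrightarrow T$, whose composite with $T \hookrightarrow X(\Delta)$ is the inclusion $U \hookrightarrow X \hookrightarrow X(\Delta)$; since $\widetilde{X}_\Delta$ is reduced and irreducible with $U$ dense, the scheme-theoretic image of $g_\Delta$ is the reduced closed subscheme of $X(\Delta)$ supported on $\overline{U} = X$, which is $X$ itself, so $g_\Delta$ factors through the closed immersion $X \hookrightarrow X(\Delta)$ and yields a morphism $\widetilde{X}_\Delta \to X$ extending $U \hookrightarrow X$. Finally, since $g_\Delta \colon \widetilde{X}_\Delta \to X(\Delta)$ is proper and $X \hookrightarrow X(\Delta)$ is separated, the cancellation property of proper morphisms gives that $\widetilde{X}_\Delta \to X$ is proper. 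I expect the identity $|\Sigma_\Delta| = \gtrop^{-1}(|\Delta|)$ of the middle paragraph to be the crux of the argument; the rest is bookkeeping with the dictionary relating the toroidal cone complex, Proposition \ref{smoothgtrop}, and the definitions of $\widetilde{X}_\Delta$ and $\Sigma_\Delta$.
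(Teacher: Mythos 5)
Your proposal is correct and follows the same route as the paper, whose proof is just the one-line citation of Proposition \ref{gtropindeterminacy} parts (\ref{gtropindeterminacyextend}) and (\ref{gtropindeterminacyproper}) together with the proof of Proposition \ref{smoothgtrop}; you have simply filled in the details, in particular the identification $\widetilde{X}_\Delta = X_{\Sigma_\Delta}$, the verification $|\Sigma_\Delta| = \gtrop^{-1}(|\Delta|)$ via $\Delta$-compatibility, and the descent to $X$ by cancellation.
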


\begin{proof}
This follows from Proposition \ref{gtropindeterminacy} parts (\ref{gtropindeterminacyextend}) and (\ref{gtropindeterminacyproper}) and the proof of Proposition \ref{smoothgtrop}.
\end{proof}

Now for each $D \in \cC(\widetilde{X} \setminus U)$ such that $\val_D|_M \in |\Delta|$, set $m_D \in \Z_{\geq 0}$ to be the multiplicity of the relative jacobian ideal of $\widetilde{X}_\Delta \to X$ at the divisor $D \cap \widetilde{X}_\Delta$. For each $D \in \cC(\widetilde{X} \setminus U)$ such that $\val_D|_M \notin |\Delta|$, set $m_D = 0$.

In the remainder of this section, we will prove the following theorem, which implies Theorem \ref{constructibilityformula}.

\begin{theorem}
\label{constructibility}

\begin{enumerate}[(a)]

\item \label{constructibilityrational} For each $\cS \subset \cC(\widetilde{X} \setminus U)$ such that $\bigcap_{D \in \cS} D \neq \emptyset$, the cone 
\[
	\pos(\val_D|_M \, | \, D \in \cS)
\]
is a rational pointed cone in $N_\R$. Furthermore, the rational function
\[
	\prod_{D \in \cS} \frac{ \bL^{-(m_D+1)}\bx^{\val_D|_M}}{1 - \bL^{-(m_D+1)}\bx^{\val_D|_M} }
\]
is a well defined element of the ring $\sM_X\langle \pos(\val_D|_M \, | \, D \in \cS) \cap N \rangle$.

\item \label{fibervolume} For each $w \in N$, the set $\trop^{-1}(w)$ is a constructible subset of $\sL(X)$, and
\[
	\mu_X(\trop^{-1}(w)) = \bL^{-\dim X} \sum_{\substack{\cS \subset \cC(\widetilde{X} \setminus U) \\ \bigcap_{D \in \cS} D \cap \widetilde{X}_\Delta \neq \emptyset}} (\bL - 1)^{|\cS|} [Y_\cS] F_\cS(w) \in \sM_X,
\]
where for each $\cS \subset \cC(\widetilde{X} \setminus U)$ such that $\bigcap_{D \in \cS} D \cap \widetilde{X}_\Delta \neq \emptyset$,
\[
	Y_\cS = \left(\widetilde{X}_\Delta \cap \bigcap_{D \in \cS} D\right) \setminus \left(\bigcup_{E \in \cC(\widetilde{X} \setminus U) \setminus \cS} E\right)
\]
is a scheme over $X$ by restriction of the map $\widetilde{X}_\Delta \to X$, and the $F_\cS(w) \in \sM_X$ are such that
\[
	\sum_{w \in N} F_\cS(w) \bx^w =  \prod_{D \in \cS} \frac{ \bL^{-(m_D+1)}\bx^{\val_D|_M}}{1 - \bL^{-(m_D+1)}\bx^{\val_D|_M} },
\]
i.e. the $F_\cS(w)$ are the coefficients of the power series expansion of the rational function in part (a).

\end{enumerate}
\end{theorem}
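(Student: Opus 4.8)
The plan is to compute the motivic volume $\mu_X(\trop^{-1}(w))$ by pulling the computation back along the proper birational-type map $g_\Delta : \widetilde{X}_\Delta \to X$ and applying the motivic change of variables formula. First I would reconcile the tropicalization map on arcs with the geometric tropicalization picture: an arc $x \in \sL(X) \setminus \sL(X \setminus U)$ has generic point factoring through $U$, hence through $T$, and $\trop(x) \in N$ records the valuations of pulled-back characters. Lifting $x$ along $g_\Delta$ (possible away from the exceptional locus, since $g_\Delta$ is proper and an isomorphism over $U$, and arcs in $\trop^{-1}(w)$ meet $U^\an$) gives an arc $\tilde{x} \in \sL(\widetilde{X}_\Delta)$, and the key point is that $\trop(x)$ is then computed from the orders of vanishing $\ord_{\cI_D}(\tilde{x})$ of $\tilde{x}$ against the boundary divisors $D$, via $\trop(x) = \sum_D \ord_{\cI_D}(\tilde{x})\, \val_D|_M$. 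This identity follows from the compatibility $\trop \circ g^\an|_{U^\an \cap \widetilde{X}^\beth} = \gtrop \circ \trop_{\widetilde{X}}$ recalled in the preliminaries, together with Proposition \ref{smoothgtrop} identifying $\gtrop$ on each relevant cone with $\varphi^\cS_\R$; the $\Delta$-compatibility hypothesis is exactly what guarantees that only $\cS$ with $\val_D|_M \in |\Delta|$ for all $D \in \cS$ contribute (matching part (b), where $Y_\cS = \emptyset$ otherwise because $\bigcap_{D\in\cS}D$ does not meet $\widetilde{X}_\Delta$).

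Next I would stratify $\sL(\widetilde{X}_\Delta)$ by the contact orders $\bn = (n_D)_D$ of arcs against the boundary, as in Proposition \ref{stratumvolume}. For a fixed $w \in N$, the preimage $\sL(g_\Delta)^{-1}(\trop^{-1}(w))$ (up to the negligible locus $\sL(\widetilde{X}\setminus U)$ and the locus where $\ordjac_{g_\Delta}$ is infinite, which lies in $\sL(\widetilde{X}\setminus U)$) is the disjoint union over all $\bn$ with $\sum_D n_D\, \val_D|_M = w$ of the contact strata $A_{\bn}$. On each $A_{\bn}$ the relative jacobian order function $\ordjac_{g_\Delta}$ is constant equal to $\sum_{D : n_D > 0} m_D\, n_D$, by definition of $m_D$ as the multiplicity of the relative jacobian ideal at $D \cap \widetilde{X}_\Delta$ and the fact that this ideal is locally monomial in the boundary equations. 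Applying change of variables with the constant function $\alpha = 0$ on $\trop^{-1}(w)$ gives
\[
	\mu_X(\trop^{-1}(w)) = (g_\Delta)_! \sum_{\bn} \mu_{\widetilde{X}_\Delta}(A_{\bn}) \,(g_\Delta^*\bL)^{-\sum_{D: n_D>0} m_D n_D},
\]
and then Proposition \ref{stratumvolume} evaluates $\mu_{\widetilde{X}_\Delta}(A_{\bn}) = \bL^{-\dim X}(\bL-1)^{|\cS(\bn)|}\bL^{-\sum_D n_D}[Y_{\cS(\bn)}]$ where $\cS(\bn) = \{D : n_D > 0\}$. Grouping the sum by $\cS = \cS(\bn)$, pushing the classes $[Y_\cS]$ forward along $g_\Delta$ (so they become $X$-schemes), and recognizing $\sum_{\bn \in \Z_{>0}^\cS} \bL^{-\sum_D(m_D+1)n_D}\bx^{\sum_D n_D \val_D|_M}$ as the coefficient expansion of the product $\prod_{D\in\cS} \frac{\bL^{-(m_D+1)}\bx^{\val_D|_M}}{1 - \bL^{-(m_D+1)}\bx^{\val_D|_M}}$ yields exactly the formula in part (b), with $F_\cS(w)$ the coefficient of $\bx^w$. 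Constructibility of $\trop^{-1}(w)$ follows since it is a finite union, over the finitely many $\bn$ summing to $w$, of images under the arc-lift bijection of the cylinders $A_{\bn}$.

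For part (a), I would argue that $\pos(\val_D|_M \mid D \in \cS)$ is pointed: by Proposition \ref{smoothgtrop} this cone is the image $\varphi^\cS_\R(\R_{\geq 0}^\cS)$ of a proper map (Theorem \ref{gtropproper}(\ref{gtropisproper})), and a properly mapped orthant has closed image containing no line, since a line in the image would pull back to an unbounded preimage of a bounded set. It is rational because it is generated by the lattice points $\val_D|_M$. Then each factor $\frac{\bL^{-(m_D+1)}\bx^{\val_D|_M}}{1-\bL^{-(m_D+1)}\bx^{\val_D|_M}} = \sum_{j\geq 1}\bL^{-j(m_D+1)}\bx^{j\val_D|_M}$ has support in $\pos(\val_D|_M)$, so the product is supported in $\pos(\val_D|_M\mid D\in\cS)\cap N$; and the coefficient of $\bx^w$ is a finite sum of classes $\bL^{-r}$ with $r \to \infty$ as $w\to\infty$ within the pointed cone (since $r \geq \sum_D n_D$ and the fibers of $\bn \mapsto w$ are finite with $\sum n_D$ forced to grow), so the coefficients converge to $0$ and the series lies in $\sM_X\langle \pos(\val_D|_M\mid D\in\cS)\cap N\rangle$.

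The main obstacle I expect is the bookkeeping around the partial lift along $g_\Delta$: one must check carefully that arcs in $\trop^{-1}(w)$ lift to $\widetilde{X}_\Delta$ rather than merely to $\widetilde{X}$ (this is where $\Delta$-compatibility and the properness of $g_\Delta$ from Proposition \ref{gtropindeterminacy} are essential), that the lift is a bijection on $k'$-points modulo negligible sets so that change of variables applies, and that the $\val_D|_M$ appearing with $n_D > 0$ really do all lie in $|\Delta|$ — equivalently that $Y_\cS$ is empty whenever some $\val_D|_M \notin |\Delta|$. The rest is an organized application of Proposition \ref{stratumvolume} and the geometric series identity.
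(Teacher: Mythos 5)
Your overall strategy --- lift arcs along the proper map $g_\Delta: \widetilde{X}_\Delta \to X$, stratify $\sL(\widetilde{X}_\Delta)$ by the contact orders $\bn$ against the boundary divisors, apply the change of variables formula with $\alpha = 0$, evaluate each stratum $A_\bn$ by Proposition \ref{stratumvolume}, and resum over $\bn \in \Z_{>0}^\cS$ with $\sum_D n_D \val_D|_M = w$ to get the product of geometric series --- is exactly the paper's, including the identification $\trop(\sL(g)(x)) = \sum_D n_D \val_D|_M$ via the compatibility of $\gtrop$ with the toroidal tropicalization, the computation $\ordjac_g = \sum_D n_D m_D$ on $A_\bn$, and the treatment of part (a) (pointedness of the cone from properness of $\varphi^\cS_\R|_{\R_{\geq 0}^\cS}$, finiteness of the fibers of $\bn \mapsto \sum_D n_D\val_D|_M$, and convergence of the coefficients).

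The one genuine gap is your argument for the constructibility of $\trop^{-1}(w)$. You deduce it at the end as ``the image under the arc-lift bijection of the cylinders $A_\bn$,'' but the image of a constructible subset of $\sL(\widetilde{X}_\Delta)$ under $\sL(g)$ is not constructible by fiat, and the change of variables formula as stated takes the constructibility of $C = \trop^{-1}(w)$ as a \emph{hypothesis}; so as written your argument is circular unless you separately invoke, and justify, the nontrivial fact that a proper morphism of smooth varieties carries a cylinder avoiding $\ordjac^{-1}(\infty)$ to a cylinder. The paper sidesteps this entirely with a direct argument on $X$ itself: for $w \in \sigma \in \Delta$ one has $\trop^{-1}(w) \subset \sL(X_\sigma)$ where $X_\sigma = X \cap X(\sigma)$, and choosing a finite generating set $S$ of the semigroup $\sigma^\vee \cap M$, the fiber $\trop^{-1}(w)$ is the finite intersection over $u \in S$ of the conditions $\ord_{\chi^u|_{X_\sigma}} = \langle u, w \rangle$, each of which is the difference of two sets pulled back from constructible subsets of finite-level jet schemes. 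You should either supply such a direct argument, or a precise reference for the image-of-cylinders statement, \emph{before} invoking change of variables; everything downstream of that point in your proposal is then correct.
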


We now prove Theorem \ref{constructibility}(\ref{constructibilityrational}).

\begin{proof}[Proof of Theorem \ref{constructibility}(\ref{constructibilityrational})]
Let $\cS \subset \cC(\widetilde{X} \setminus U)$ such that $\bigcap_{D \in \cS} D \neq \emptyset$, and let $\varphi^\cS: \Z^\cS \to N$ be the map of lattices such that for each $D \in \cS$, the standard basis vector of $\Z^\cS$ associated to $D$ is sent to $\val_D|_M \in N$.

By Theorem \ref{gtropproper}(\ref{gtropisproper}), proved previously in Section \ref{gtroptoroidalsection}, the map $\varphi^\cS_\R|_{\R_{\geq 0}^\cS}: \R_{\geq 0}^\cS \to N_\R$ is proper as a map of topological spaces. Therefore the cone 
\[
	\pos(\val_D|_M \, | \, D \in \cS) = \varphi^\cS_\R(\R_{\geq 0}^\cS)
\]
is pointed, and it is clearly rational. Thus we only need to show that the rational function
\[
	\prod_{D \in \cS} \frac{ \bL^{-(m_D+1)}\bx^{\val_D|_M}}{1 - \bL^{-(m_D+1)}\bx^{\val_D|_M} }
\]
is a well defined element of the ring $\sM_X\langle \pos(\val_D|_M \, | \, D \in \cS) \cap N \rangle$.

For each $w \in N$, set
\[
	F_\cS(w) = \sum_{\substack{\bn = (n_D)_{D \in \cS} \in \Z_{> 0}^\cS\\ \varphi^{\cS}(\bn) = w}} \bL^{-\sum_{D \in \cS} n_D(m_D+1)}.
\]
By the properness of $\varphi^\cS_\R|_{\R_{\geq 0}^\cS}$, the sum defining $F_\cS(w)$ is finite, so it is a well defined element of $\sM_X$. It is also straightforward to check that
\[
	\prod_{D \in \cS} \frac{ \bL^{-(m_D+1)}\bx^{\val_D|_M}}{1 - \bL^{-(m_D+1)}\bx^{\val_D|_M} } = \sum_{w \in N} F_\cS(w)\bx^w,
\]
and because each $m_D \geq 0$,
\[
	\sum_{w \in N} F_\cS(w)\bx^w \in \sM_X\langle \pos(\val_D|_M \, | \, D \in \cS) \cap N \rangle.
\]
\end{proof}

Let $\trop: \sL(X) \setminus \sL(X \setminus U) \to N$ be the tropicalization map defined in the introduction. We will now prove the first part of Theorem \ref{constructibility}(\ref{fibervolume}).

\begin{proposition}
For each $w \in N$, the set $\trop^{-1}(w)$ is a constructible subset of $\sL(X)$.
\end{proposition}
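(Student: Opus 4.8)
The plan is to pull the fiber $\trop^{-1}(w)$ back along the proper map $\widetilde{X}_\Delta \to X$ constructed just above, where it becomes manifestly a finite union of cylinders, and then push constructibility back down to $\mathscr{L}(X)$. Write $h\colon \widetilde{X}_\Delta \to X$ for that proper map.

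First I would record that $h$ is an isomorphism over $U$, so that $h^{-1}(U)=U$ and $\partial:=\widetilde{X}_\Delta\setminus U=h^{-1}(X\setminus U)$ is a simple normal crossing divisor whose components are the $D\cap\widetilde{X}_\Delta$ for those $D\in\cC(\widetilde{X}\setminus U)$ with $D\cap\widetilde{X}_\Delta\neq\emptyset$. Using Bhatt's theorem to view arcs as $k'\llbracket t\rrbracket$-points and applying the valuative criterion of properness to $h$ over the discrete valuation rings $k'\llbracket t\rrbracket$, one checks that $\mathscr{L}(h)$ induces, for every field extension $k'$ of $k$, a bijection from the $k'$-points of $\mathscr{L}(\widetilde{X}_\Delta)\setminus\mathscr{L}(\partial)$ onto those of $\mathscr{L}(X)\setminus\mathscr{L}(X\setminus U)$: an arc of $X$ with generic point in $U$ gives a $k'(\!(t)\!)$-point of $\widetilde{X}_\Delta$, which extends uniquely to an arc of $\widetilde{X}_\Delta$, necessarily not factoring through $\partial$.

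Next I would identify the fiber upstairs combinatorially. For $u\in M$ the rational function $\chi^u|_U$ on $\widetilde{X}_\Delta$ is a unit on $U$, so $\divr(\chi^u|_U)=\sum_D\langle u,\val_D|_M\rangle\,(D\cap\widetilde{X}_\Delta)$, the sum over the components $D$ of $\partial$; evaluating along an arc $\tilde{x}$ not factoring through $\partial$ gives $\langle u,\trop(h(\tilde{x}))\rangle=\sum_D\langle u,\val_D|_M\rangle\,\ord_{\mathcal{I}_D}(\tilde{x})$, i.e. $\trop(h(\tilde{x}))=\varphi\big((\ord_{\mathcal{I}_D}(\tilde{x}))_D\big)$ where $\varphi$ sends the basis vector of $D$ to $\val_D|_M$. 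Hence, under the bijection above, $\trop^{-1}(w)$ corresponds to the union of the cylinders $C_{\bn}=\{\tilde{x}:\ord_{\mathcal{I}_D}(\tilde{x})=n_D\text{ for all }D\}$ over all $\bn=(n_D)_D$, with $n_D\in\Z_{\geq 0}$ indexed by the components $D$ of $\partial$, satisfying $\varphi(\bn)=w$; each $C_{\bn}$ is a cylinder by Proposition \ref{stratumvolume}. The finiteness of this union is where the geometric-tropicalization input enters: if $C_{\bn}\neq\emptyset$ and $\cS$ is the support of $\bn$, then $\bigcap_{D\in\cS}D\neq\emptyset$, so by Theorem \ref{gtropproper}(\ref{gtropisproper}) the map $\varphi^{\cS}_\R|_{\R_{\geq 0}^{\cS}}\colon\R_{\geq 0}^{\cS}\to N_\R$ is proper; its fiber over $w$ is then compact and contains only finitely many lattice points, so only finitely many $\bn$ supported on $\cS$ have $\varphi(\bn)=w$. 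Summing over the finitely many subsets $\cS$ shows $W:=\mathscr{L}(h)^{-1}(\trop^{-1}(w))\cap(\mathscr{L}(\widetilde{X}_\Delta)\setminus\mathscr{L}(\partial))$ is a finite union of cylinders, hence a constructible subset of $\mathscr{L}(\widetilde{X}_\Delta)$.

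Finally, to descend: the relative jacobian ideal of $h$ is supported on $\partial$, so $\ordjac_h$ is bounded on the finite union $W$, and by the first step $\mathscr{L}(h)$ restricts to a bijection on $k'$-points from $W$ onto $\trop^{-1}(w)$. The standard structure theory of arc spaces under proper morphisms then yields that $\mathscr{L}(h)(W)=\trop^{-1}(w)$ is constructible in $\mathscr{L}(X)$. I expect this last step to be the main obstacle: one must invoke, with its hypotheses correctly arranged (constructibility of $W$, boundedness of $\ordjac_h$ on $W$, and injectivity of $\mathscr{L}(h)$ on $k'$-points of $W$), the fact from motivic integration \cite{CNS} that the image under a proper morphism of such a cylinder is again a cylinder — or else re-derive directly the stabilization of the truncations $\theta_n(W)$ from the behavior of $h$ on jet schemes. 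The remaining steps are essentially bookkeeping: the valuative criterion in the first step, the divisor computation in the third, and the already-established properness of geometric tropicalization.
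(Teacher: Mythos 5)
Your proposal is correct, but it takes a genuinely different route from the paper. The paper proves constructibility directly on $X$ using the toric structure: it first notes $\trop^{-1}(w)=\emptyset$ unless $w$ lies in some $\sigma\in\Delta$, and then, choosing a finite set $S$ of semigroup generators of $\sigma^\vee\cap M$, writes $\trop^{-1}(w)$ as the finite intersection over $u\in S$ of the conditions $\ord(\chi^u|_{X_\sigma})=\langle u,w\rangle$, each of which is visibly a difference of preimages under truncation maps of constructible subsets of jet schemes of the hypersurfaces $V(\chi^u|_{X_\sigma})$. This is elementary, needs no input from the log resolution, and keeps the logical order clean: the paper's stated change of variables formula takes the constructibility of \emph{both} $C=\trop^{-1}(w)$ and $C'=B_w$ as hypotheses, so constructibility downstairs must be established before that theorem is invoked. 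Your route instead establishes constructibility upstairs (your $W$ is exactly the set $B_w$ of the paper, and your finiteness argument via properness of $\varphi^\cS_\R|_{\R_{\geq 0}^\cS}$ reproduces Lemmas \ref{ordjac} and \ref{constructibleunion}) and then descends via the theorem that the image under a proper birational morphism of a cylinder on which $\ordjac$ is bounded and on which $\sL(h)$ is injective on $k'$-points is again a cylinder. That descent theorem is true and available in the literature, but it is a strictly stronger black box than the change of variables formula as quoted in the paper, and you correctly identify it as the load-bearing step. The trade-off: your argument unifies the constructibility proof with the volume computation that follows (all the data $A_\bn$, $\ordjac_g$, finiteness are needed there anyway), while the paper's argument is self-contained and avoids any appeal to the structure of images of cylinders.
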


\begin{proof}
If $w \notin |\Delta|$, then $\trop^{-1}(w) = \emptyset$. Therefore we may assume there exists $\sigma \in \Delta$ such that $w \in \sigma$. Let $X(\sigma)$ be its associated affine $T$-toric variety, and let $X_\sigma = X \cap X(\sigma)$.

For any $u \in \sigma^\vee \cap M$ and $x \in \trop^{-1}(w)$ with generic point $\eta$,
\[
	\val(\chi^u|_U(\eta)) = \langle u, w \rangle \geq 0,
\]
so
\[
	\trop^{-1}(w) \subset \sL(X_\sigma).
\]
Then it is not difficult to check that if $S \subset M$ is any set of semigroup generators for $\sigma^\vee \cap M$,
\[
	\trop^{-1}(w) = \bigcap_{u \in S} \left[\theta_{\langle u, w \rangle-1}^{-1}\left(\sL_{\langle u, w \rangle-1}(V(\chi^u|_{X_\sigma}))\right) \setminus \theta_{\langle u, w \rangle}^{-1}\left(\sL_{\langle u, w \rangle}(V(\chi^u|_{X_\sigma}))\right)\right],
\]
where each $V(\chi^u|_{X_\sigma})$ is the hypersurface of $X_\sigma$ cut out by $\chi^u|_{X_\sigma}$, and by any possible appearance of $\theta_{-1}^{-1}\left(\sL_{-1}(V(\chi^u|_{X_\sigma}))\right)$, we actually mean $\theta_0^{-1}(X_\sigma)$. Also, each $V(\chi^u|_{X_\sigma}) \to X$ is a closed immersion followed by an open immersion, so for any $n \in \Z_{\geq 0}$, the set $\sL_{n}(V(\chi^u|_{X_\sigma}))$ is a constructible subset of $\sL_n(X)$. Therefore by taking $S$ to be finite, we see that $\trop^{-1}(w)$ is a constructible subset of $\sL(X)$.
\end{proof}

The remainder of this section is dedicated to finishing the proof of Theorem \ref{constructibility}(\ref{fibervolume}).

Let $g: \widetilde{X}_\Delta \to X$ be the proper morphism extending the inclusion of $U$ into $X$. Set
\[
	\cC_\Delta = \{D \in \cC(\widetilde{X} \setminus U) \, | \, D \cap \widetilde{X}_\Delta \neq \emptyset\} \subset \cC(\widetilde{X} \setminus U),
\]
and for each $D \in \cC_\Delta$, let $\cI_{D, \Delta}$ be the ideal sheaf of $D \cap \widetilde{X}_\Delta$ in $\widetilde{X}_\Delta$.

For each $\bn = (n_D)_D \in \Z_{\geq 0}^{\cC_\Delta}$, set
\[
	A_\bn = \{x \in \sL(\widetilde{X}_\Delta) \, | \, \text{$\ord_{\cI_{D, \Delta}}(x) = n_D$ for all $D \in \cC_\Delta$}\} ,
\]
and note that 
\[
	\bigcup_{\bn \in \Z_{\geq 0}^{\cC_\Delta}} A_\bn = \sL(\widetilde{X}_\Delta) \setminus \sL(\widetilde{X}_\Delta \setminus U).
\]

\begin{lemma}
\label{ordjac}
For all $\bn = (n_D)_D \in \Z_{\geq 0}^{\cC_\Delta}$ and $x \in A_\bn$,
\[
	\ordjac_g(x) = \sum_{D \in \cC_\Delta} n_D m_D,
\]
and
\[
	\trop(\sL(g)(x)) = \sum_{D \in \cC_\Delta} n_D \val_D|_M.
\]
\end{lemma}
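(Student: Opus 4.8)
The plan is to reduce both equalities to local computations at the origin $x(0)$ of the arc $x$, using that $\widetilde{X}_\Delta$ is smooth, that $\widetilde{X}_\Delta \setminus U$ is a simple normal crossing divisor whose irreducible components are exactly the $D \cap \widetilde{X}_\Delta$ for $D \in \cC_\Delta$, and --- via the displayed identity $\bigcup_\bn A_\bn = \sL(\widetilde{X}_\Delta) \setminus \sL(\widetilde{X}_\Delta \setminus U)$ preceding the statement --- that $x \in A_\bn$ does not factor through the boundary, so its generic point $\eta_x \colon \Spec(k'(\!(t)\!)) \to \widetilde{X}_\Delta$ lands in $U$. A preliminary observation I would record is that for $D \in \cC_\Delta$ one has $n_D > 0$ precisely when $x(0) \in D$, so the boundary components through $x(0)$ are exactly the $D$ with $n_D > 0$, and $n_D = 0$ for all other $D \in \cC_\Delta$.

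For the jacobian identity, I would first note that $g$ restricts to the open immersion $U \hookrightarrow X$, so $\Omega_{\widetilde{X}_\Delta/X}$ vanishes on $U$ and the locally principal relative jacobian ideal $J$ of $g$ is the unit ideal there. Hence $\divr(J)$ is an effective Cartier divisor supported on $\widetilde{X}_\Delta \setminus U$, which by the very definition of the $m_D$ equals $\sum_{D \in \cC_\Delta} m_D\,(D \cap \widetilde{X}_\Delta)$; since a locally principal ideal on the smooth variety $\widetilde{X}_\Delta$ is determined by its associated divisor, this identifies $J$ with $\prod_{D \in \cC_\Delta} \cI_{D,\Delta}^{m_D}$ as ideal sheaves. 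Additivity of the order function along $x$ then gives $\ordjac_g(x) = \sum_{D \in \cC_\Delta} m_D\,\ord_{\cI_{D,\Delta}}(x) = \sum_{D \in \cC_\Delta} n_D m_D$, with finiteness automatic because $x$ does not factor through $\widetilde{X}_\Delta \setminus U$.

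For the tropicalization identity, I would fix $u \in M$, let $h_u$ be the rational function on $\widetilde{X}$ extending the invertible function $\chi^u|_U$ on $U$, and use that, $h_u$ being a unit on $U$, the definition of $\val_D|_M$ gives $\divr(h_u) = \sum_{D \in \cC(\widetilde{X} \setminus U)} \langle u, \val_D|_M \rangle\, D$. Choosing local coordinates $z_1, \dots, z_d$ at $x(0)$ on $\widetilde{X}_\Delta$ in which the boundary components $D_{i_1}, \dots, D_{i_r}$ through $x(0)$ are cut out by $z_1, \dots, z_r$, this lets me write $h_u = v \cdot \prod_{j} z_j^{\langle u, \val_{D_{i_j}}|_M \rangle}$ with $v$ regular and nonvanishing at $x(0)$. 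Pulling back along $\eta_x$ (which lands in $U$, where $\chi^u|_U$ is defined), and using that $\val(z_j \circ \eta_x) = \ord_{\cI_{D_{i_j},\Delta}}(x) = n_{D_{i_j}}$, that $v$ pulls back to a unit of $k'(\!(t)\!)$, and that $n_D = 0$ for the remaining $D \in \cC_\Delta$, I get $\langle u, \trop(\sL(g)(x)) \rangle = \val(\chi^u|_U(\eta_x)) = \langle u, \sum_{D \in \cC_\Delta} n_D \val_D|_M \rangle$. Since this holds for every $u \in M$, and since the generic point of the arc $\sL(g)(x)$ is $g \circ \eta_x$, which factors through $U$, the defining property of $\trop$ yields $\trop(\sL(g)(x)) = \sum_{D \in \cC_\Delta} n_D \val_D|_M$.

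The steps are routine unwindings of the definitions of $\ordjac_g$, $\trop$, and $\val_D|_M$; the one point to handle with care is that $h_u$ need not be regular on $\widetilde{X}_\Delta$ --- the exponents $\langle u, \val_D|_M \rangle$ can be negative --- so the local factorization and the order computation must be carried out for the rational function $h_u$ pulled back along $\eta_x$ rather than along the arc $x$ itself. I expect this to be the only real subtlety.
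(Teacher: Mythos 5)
Your proof is correct and follows essentially the same route as the paper: the jacobian identity is the same unwinding of the definition of the $m_D$, and your local-coordinate computation of $\val(\chi^u(\eta_x))$ is exactly what the paper packages as the factorization $\trop\circ\sL(g) = \gtrop\circ\trop_{\Star Y}$ together with $\trop_{\Star Y}(x)=\sum_D n_D v_D$ and $\gtrop(v_D)=\val_D|_M$. Your explicit attention to the possibly negative exponents of $h_u$ and to working with the generic point $\eta_x$ is a welcome clarification of a step the paper leaves as "easy to check."
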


\begin{proof}
By definition of $A_\bn$ and each $m_D$,
\[
	\ordjac_g(x) = \sum_{D \in \cC_\Delta} n_D m_D.
\]
Now let $\cS = \{D \in \cC_\Delta \, | \, n_D > 0\}$, and let $Y$ be the component of $\left(\widetilde{X}_\Delta \cap \bigcap_{D \in \cS} D\right) \setminus \left(\bigcup_{E \in \cC(\widetilde{X} \setminus U) \setminus \cS} E\right)$ such that $\theta_0(x) \in Y$. Then considering $U \subset \widetilde{X}_\Delta$ as a toroidal embedding, let $M^Y$ be the group of Cartier divisors on $\Star Y$ supported on $\Star Y \setminus U$, let $N^Y = (M^Y)^\vee$ and let $\sigma^Y \subset N^Y_\R$ be the cone of elements that are nonnegative on the effective divisors in $M^Y$. We have a map $\trop_{\Star Y}: \sL(\Star Y) \setminus \sL(\Star Y \setminus U) \to \sigma^Y$ defined so that for each Cartier divisor $m \in M^Y$ and each $y \in \sL(\Star Y) \setminus \sL(\Star Y \setminus U)$,
\[
	\langle m, \trop_{\Star Y}(y) \rangle = \ord_f(y),
\]
where $f$ is a local equation for $m$ in a neighborhood of $\theta_0(y)$. Let $\{v_D\}_{D \in \cS}$ be the basis of $N^Y$ that is dual to the basis $\{D \cap \Star Y\}_{D \in \cS}$ of $M^Y$. Then it is easy to check that
\[
	\gtrop \circ \trop_{\Star Y} = \trop \circ \sL(g)|_{\sL(\Star Y) \setminus \sL(\Star Y \setminus U)},
\]
that
\[
	\trop_{\Star Y}(x) = \sum_{D \in \cS} n_D v_D,
\]
and for $D \in \cS$,
\[
	\gtrop(v_D) = \val_D|_M.
\]
Therefore
\[
	\trop(\sL(g)(x)) = \sum_{D \in \cS} n_D \val_D|_M.
\]
\end{proof}

For each $w \in N$, set
\[
	B_w = \bigcup_{\substack{\bn = (n_D)_D \in \Z_{\geq 0}^{\cC_\Delta} \\ \sum_{D \in \cC_\Delta} n_D \val_D|_M = w}} A_\bn.
\]

\begin{lemma}
\label{constructiblebijection}
For each $w \in N$ and each field extension $k'$ of $k$, the map $\sL(g)$ induces a bijection between the $k'$ points of $B_w$ and the $k'$ points of $\trop^{-1}(w)$.
\end{lemma}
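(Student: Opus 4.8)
The plan is to analyze the map $\sL(g)$ on $k'$-points, using the fact that $g\colon \widetilde{X}_\Delta \to X$ is proper and restricts to the identity on $U$. First I would show that $\sL(g)$ carries $k'$-points of $B_w$ into $\trop^{-1}(w)$. An arc $x \in A_\bn(k')$ has generic point not factoring through $\widetilde{X}_\Delta \setminus U$ exactly when some $n_D = 0$ for all $D$... more precisely, its generic point lands in $U$ since $x \in A_\bn$ with $\bn$ finite means $\ord_{\cI_{D,\Delta}}(x) < \infty$ for all $D$, hence the generic point avoids every boundary divisor and lies in $U \subseteq X$; so $\sL(g)(x)$ has generic point in $U$ and is a genuine point of $\sL(X) \setminus \sL(X \setminus U)$. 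By Lemma \ref{ordjac}, $\trop(\sL(g)(x)) = \sum_{D \in \cC_\Delta} n_D \val_D|_M = w$, so $\sL(g)(x) \in \trop^{-1}(w)$. Since the $A_\bn$ are pairwise disjoint and their union is $\sL(\widetilde{X}_\Delta) \setminus \sL(\widetilde{X}_\Delta \setminus U)$, the union defining $B_w$ is disjoint, and every $k'$-point of $B_w$ is a $k'$-point of some unique $A_\bn$ with $\sum n_D \val_D|_M = w$.

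Next I would establish surjectivity and injectivity on $k'$-points simultaneously via the valuative criterion of properness. Let $y \in \trop^{-1}(w)(k')$, i.e.\ an arc $\Spec(k'\llbracket t \rrbracket) \to X$ whose generic point $\eta\colon \Spec(k'(\!(t)\!)) \to X$ factors through $U$. Composing $\eta$ with $U \hookrightarrow \widetilde{X}_\Delta$ gives a map $\Spec(k'(\!(t)\!)) \to \widetilde{X}_\Delta$ over $X$; since $k'\llbracket t \rrbracket$ is a valuation ring with fraction field $k'(\!(t)\!)$ and $g$ is proper (hence separated and satisfying the existence part of the valuative criterion), this extends uniquely to $\tilde y\colon \Spec(k'\llbracket t \rrbracket) \to \widetilde{X}_\Delta$ with $g \circ \tilde y = y$. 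Uniqueness of the extension gives injectivity: two $k'$-points of $B_w$ mapping to $y$ must both be such extensions, hence equal. For the claim that $\tilde y$ actually lies in $B_w$: its generic point is $\eta$, which factors through $U$, so $\tilde y \in \sL(\widetilde{X}_\Delta) \setminus \sL(\widetilde{X}_\Delta \setminus U)$, hence $\tilde y \in A_\bn(k')$ for the unique $\bn = (\ord_{\cI_{D,\Delta}}(\tilde y))_D \in \Z_{\geq 0}^{\cC_\Delta}$; each entry is finite precisely because the generic point avoids the boundary. Applying Lemma \ref{ordjac} once more, $w = \trop(y) = \trop(\sL(g)(\tilde y)) = \sum_{D} n_D \val_D|_M$, so $\bn$ satisfies the indexing condition for $B_w$ and $\tilde y \in B_w(k')$.

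I expect the main obstacle to be verifying carefully that the unique valuative-criterion extension $\tilde y$ has \emph{all} $\ord_{\cI_{D,\Delta}}(\tilde y)$ finite, equivalently that its generic point lies in $U$ rather than merely in $\widetilde{X}_\Delta$; this is where one uses that $\eta$ was assumed to factor through $U$ and that $U$ is the same open set on both sides of $g$. One should also double-check that $\sL(g)(\tilde y) = y$ as arcs (not just that they agree generically) — but this is immediate since both are maps $\Spec(k'\llbracket t \rrbracket) \to X$ agreeing on the scheme-theoretically dense generic point and $X$ is separated. The remaining bookkeeping — disjointness of the $A_\bn$, and matching up the indexing set of $B_w$ with the constraint $\sum n_D \val_D|_M = w$ via Lemma \ref{ordjac} — is routine.
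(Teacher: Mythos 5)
Your proposal is correct and follows essentially the same route as the paper: the paper's proof is simply the observation that properness of $g$ together with $g|_U = \mathrm{id}_U$ gives a bijection on $k'$-points between $\sL(\widetilde{X}_\Delta) \setminus \sL(\widetilde{X}_\Delta \setminus U)$ and $\sL(X) \setminus \sL(X \setminus U)$, after which Lemma \ref{ordjac} sorts the arcs into the correct fibers of $\trop$. You have merely unpacked the valuative-criterion details that the paper leaves implicit.
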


\begin{proof}
Because $g: \widetilde{X}_\Delta \to X$ is proper and restricts to the identity on $U$, we have that $\sL(g)$ induces a bijection between the $k'$ points of $\sL(\widetilde{X}_\Delta) \setminus \sL(\widetilde{X}_\Delta \setminus U)$ and the $k'$ points of $\sL(X) \setminus \sL(X \setminus U)$. Thus we are done by Lemma \ref{ordjac}.
\end{proof}

For any $\cS \subset \cC_\Delta$, identify $\Z^\cS$ with the subgroup of $\Z^{\cC_\Delta}$ generated by the standard basis vectors associated to each $D \in \cS$.

\begin{lemma}
\label{constructibleunion}
For any $w \in N$,
\[
	B_w = \bigcup_{\substack{\cS \subset \cC(\widetilde{X} \setminus U) \\ \bigcap_{D \in \cS} D \cap \widetilde{X}_\Delta \neq \emptyset}}\left( \bigcup_{\substack{\bn = (n_D)_D \in \Z_{>0}^\cS \\ \sum_{D \in \cS} n_D \val_D|_M = w}} A_\bn\right),
\]
and the right hand side is a finite disjoint union.
\end{lemma}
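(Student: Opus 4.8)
The plan is to prove the set equality together with the finite‑disjointness assertion by reindexing the union defining $B_w$ according to the support of the exponent vector $\bn$, and then to deduce the finiteness from the properness statement already established in Theorem \ref{gtropproper}(\ref{gtropisproper}). Nothing here requires revisiting the geometry of $\widetilde{X}_\Delta \to X$; it is a bookkeeping argument about the sets $A_\bn$.

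First I would reindex. By definition $B_w = \bigcup_\bn A_\bn$, where $\bn = (n_D)_D$ runs over $\Z_{\geq 0}^{\cC_\Delta}$ with $\sum_{D \in \cC_\Delta} n_D \val_D|_M = w$. Every such $\bn$ has a well‑defined support $\cS = \{D \in \cC_\Delta : n_D > 0\}$, and under the stated identification of $\Z^\cS$ with the subgroup of $\Z^{\cC_\Delta}$ spanned by the basis vectors indexed by $\cS$, one has $\bn \in \Z_{>0}^\cS$; conversely, every $\cS \subset \cC_\Delta$ and every $\bn \in \Z_{>0}^\cS$ determines an element of $\Z_{\geq 0}^{\cC_\Delta}$ of support exactly $\cS$, with $\sum_{D \in \cC_\Delta} n_D \val_D|_M = \sum_{D \in \cS} n_D \val_D|_M = \varphi^\cS(\bn)$. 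This is a bijection between index sets, so $B_w = \bigcup_{\cS \subset \cC_\Delta}\bigl(\bigcup_{\bn \in \Z_{>0}^\cS,\ \sum_{D \in \cS} n_D \val_D|_M = w} A_\bn\bigr)$. To match the asserted right‑hand side I would then argue that the condition ``$\cS \subset \cC_\Delta$'' can be replaced by ``$\cS \subset \cC(\widetilde{X} \setminus U)$ with $\bigcap_{D \in \cS} D \cap \widetilde{X}_\Delta \neq \emptyset$'': if $\bigcap_{D \in \cS}(D \cap \widetilde{X}_\Delta) = \emptyset$ then every $A_\bn$ with $\bn \in \Z_{>0}^\cS$ is empty, since an arc $x \in A_\bn$ would have $\ord_{\cI_{D,\Delta}}(x) = n_D \geq 1$ and hence $\theta_0(x) \in D \cap \widetilde{X}_\Delta$ for all $D \in \cS$; and conversely $\bigcap_{D \in \cS}(D \cap \widetilde{X}_\Delta) \neq \emptyset$ forces $D \cap \widetilde{X}_\Delta \neq \emptyset$ for each $D \in \cS$, i.e. $\cS \subset \cC_\Delta$.

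For disjointness I would use that the functions $\ord_{\cI_{D,\Delta}}$ are well‑defined on the locus of arcs of $\widetilde{X}_\Delta$ not contained in $D$, and that membership in $A_\bn$ is precisely the condition that these orders take the values $n_D$; hence distinct $\bn \in \Z_{\geq 0}^{\cC_\Delta}$ — in particular exponent vectors with distinct supports — give disjoint $A_\bn$, so the right‑hand side is a disjoint union. For finiteness, $\cC(\widetilde{X} \setminus U)$ is a finite set, so only finitely many $\cS$ occur, and for each $\cS$ with $\bigcap_{D \in \cS}(D \cap \widetilde{X}_\Delta) \neq \emptyset$ the map $\varphi^\cS_\R|_{\R_{\geq 0}^\cS} : \R_{\geq 0}^\cS \to N_\R$ is proper by Theorem \ref{gtropproper}(\ref{gtropisproper}), so $(\varphi^\cS_\R|_{\R_{\geq 0}^\cS})^{-1}(\{w\})$ is compact, hence bounded, hence meets $\Z^\cS$ in finitely many points; in particular $\{\bn \in \Z_{>0}^\cS : \sum_{D \in \cS} n_D \val_D|_M = w\}$ is finite. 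Combining these gives that the right‑hand side is a finite disjoint union. The only step that is not pure bookkeeping is this last appeal to properness (equivalently, pointedness of the relevant cone), so I expect that to be the main obstacle, though it is already available from the earlier sections.
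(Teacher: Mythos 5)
Your proposal is correct and follows essentially the same route as the paper: reindex $B_w$ by the support $\cS$ of $\bn$, discard the $\cS$ with $\bigcap_{D\in\cS}D\cap\widetilde{X}_\Delta=\emptyset$ because the center $\theta_0(x)$ of any arc in such an $A_\bn$ would have to lie in that empty intersection, note disjointness from the definition of the $A_\bn$, and get finiteness from the properness in Theorem \ref{gtropproper}(\ref{gtropisproper}). The only difference is that you spell out the compactness-of-fiber argument that the paper leaves implicit.
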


\begin{proof}
Let $\cS \subset \cC_\Delta$. For any $\bn \in \Z_{>0}^\cS$ and any $x \in A_\bn$,
\[
	\theta_0(x) \in \bigcap_{D \in \cS} D \cap \widetilde{X}_\Delta.
\]
Thus if $\bigcap_{D \in \cS} D \cap \widetilde{X}_\Delta = \emptyset$ and $\bn \in \Z_{>0}^\cS$, then $A_\bn = \emptyset$. This proves the desired equality of sets above.

By the definition of the $A_\bn$, the union above is clearly disjoint. It is finite by Theorem \ref{gtropproper}(\ref{gtropisproper}).
\end{proof}

For each $\cS \subset \cC(\widetilde{X} \setminus U)$, set
\[
	Y_\cS^\Delta = \left(\widetilde{X}_\Delta \cap \bigcap_{D \in \cS} D\right) \setminus \left(\bigcup_{E \in \cC(\widetilde{X} \setminus U) \setminus \cS} E\right)
\]
considered as a scheme over $\widetilde{X}_\Delta$ by the inclusion map.

\begin{lemma}
\label{constructiblestrata}
Let $\cS \subset \cC_\Delta$. Then for any $\bn = (n_D)_D \in \Z_{>0}^\cS$, the set $A_\bn$ is a constructible subset of $\sL(\widetilde{X}_\Delta)$ and
\[
	\mu_{\widetilde{X}_\Delta}(A_\bn) = g^*\left(\bL^{-\dim X}(\bL - 1)^{|\cS|}\bL^{-\sum_{D \in \cS}n_D}\right) [Y_\cS^\Delta] \in \sM_{\widetilde{X}_\Delta}.
\]
\end{lemma}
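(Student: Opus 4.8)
The plan is to reduce Lemma \ref{constructiblestrata} to the already-stated Proposition \ref{stratumvolume}, applied to the toroidal (in fact simple normal crossing) embedding $U \subset \widetilde{X}_\Delta$, and then to transport the resulting volume from $\sM_{\widetilde{X}_\Delta}$ to a statement about the map $g: \widetilde{X}_\Delta \to X$. First I would record that $\widetilde{X}_\Delta$ is an open subvariety of the smooth complete variety $\widetilde{X}$, hence smooth, and that its boundary $\widetilde{X}_\Delta \setminus U$ is a simple normal crossing divisor whose irreducible components are exactly $\{D \cap \widetilde{X}_\Delta \mid D \in \cC_\Delta\}$. So $\mu_{\widetilde{X}_\Delta}$ is defined, and the set $A_\bn$ defined here is literally the set called $A$ in Proposition \ref{stratumvolume}, with the index set $\cC(X \setminus U)$ there replaced by $\cC_\Delta$ and the ideal sheaves $\cI_D$ replaced by the $\cI_{D,\Delta}$. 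Since $\bn \in \Z_{>0}^\cS$, the set $\{D \in \cC_\Delta \mid n_D > 0\}$ is exactly $\cS$, and the stratum ``$Y$'' produced by Proposition \ref{stratumvolume} is precisely $Y_\cS^\Delta = \left(\widetilde{X}_\Delta \cap \bigcap_{D \in \cS} D\right) \setminus \left(\bigcup_{E \in \cC(\widetilde{X}\setminus U)\setminus\cS} E\right)$, where I should note that enlarging the union over $E$ from $\cC_\Delta \setminus \cS$ to $\cC(\widetilde{X}\setminus U)\setminus\cS$ changes nothing because the extra divisors $E \in \cC(\widetilde{X}\setminus U)\setminus\cC_\Delta$ satisfy $E \cap \widetilde{X}_\Delta = \emptyset$.

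Next, Proposition \ref{stratumvolume} gives directly that $A_\bn$ is a constructible subset of $\sL(\widetilde{X}_\Delta)$ and that
\[
	\mu_{\widetilde{X}_\Delta}(A_\bn) = \bL^{-\dim \widetilde{X}_\Delta}(\bL-1)^{|\cS|}\bL^{-\sum_{D\in\cS} n_D}[Y_\cS^\Delta] \in \sM_{\widetilde{X}_\Delta},
\]
where $[Y_\cS^\Delta]$ here is the class of $Y_\cS^\Delta$ as a $\widetilde{X}_\Delta$-scheme via inclusion. The remaining work is bookkeeping about which scheme over which base we take the class of. Since $g: \widetilde{X}_\Delta \to X$ is a morphism, and since $\dim \widetilde{X}_\Delta = \dim X$ (as $g$ is proper, birational onto $X$ — it restricts to the identity on the dense open $U$ — so both varieties have the same dimension), I can rewrite $\bL^{-\dim\widetilde{X}_\Delta} = \bL^{-\dim X}$. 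Then I use that the base-change ring homomorphism $g^*: \sM_X \to \sM_{\widetilde{X}_\Delta}$ sends $\bL \mapsto \bL$ (the Lefschetz class is stable under base change) and is a ring map, so
\[
	g^*\!\left(\bL^{-\dim X}(\bL-1)^{|\cS|}\bL^{-\sum_{D\in\cS}n_D}\right) = \bL^{-\dim X}(\bL-1)^{|\cS|}\bL^{-\sum_{D\in\cS}n_D} \in \sM_{\widetilde{X}_\Delta}.
\]
Finally, in the statement of the lemma, $[Y_\cS^\Delta]$ is understood as the class of $Y_\cS^\Delta$ in $\sM_{\widetilde{X}_\Delta}$ with its $\widetilde{X}_\Delta$-scheme structure by inclusion — which is exactly the $[Y]$ produced by Proposition \ref{stratumvolume} — so the two sides match and the lemma follows.

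I do not expect a serious obstacle here: the content is entirely in Proposition \ref{stratumvolume}, and the only things to be careful about are (i) checking that $\widetilde{X}_\Delta$ genuinely is a smooth variety with simple normal crossing boundary $\widetilde{X}_\Delta\setminus U$ so that the measure $\mu_{\widetilde{X}_\Delta}$ and the proposition apply, (ii) matching the stratum $Y$ of Proposition \ref{stratumvolume} with $Y_\cS^\Delta$ as defined here, including the harmless replacement of $\cC_\Delta\setminus\cS$ by $\cC(\widetilde{X}\setminus U)\setminus\cS$ in the removed locus, and (iii) the identification $\dim\widetilde{X}_\Delta = \dim X$ and the compatibility of $g^*$ with $\bL$, which lets me insert the $g^*(-)$ purely cosmetically so that the formula is ready to be fed into the motivic change of variables formula in the next step of the proof of Theorem \ref{constructibility}(\ref{fibervolume}). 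The mild subtlety worth a sentence is why $A_\bn$ for $\bn \in \Z_{>0}^\cS$ is nonempty only when $\bigcap_{D\in\cS} D \cap \widetilde{X}_\Delta \neq \emptyset$ — this is where one uses that $\theta_0(x) \in \bigcap_{D\in\cS}(D\cap\widetilde{X}_\Delta)$ for $x \in A_\bn$, already observed in Lemma \ref{constructibleunion} — but for the formula itself one simply notes that when the intersection is empty both $A_\bn$ and $Y_\cS^\Delta$ are empty and the identity holds trivially.
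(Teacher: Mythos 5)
Your proposal is correct and follows exactly the paper's argument: the paper's proof is the one-line observation that the lemma follows from Proposition \ref{stratumvolume} together with the facts that $g^*(\bL)$ is the class of $\bA^1_{\widetilde{X}_\Delta}$ in $\sM_{\widetilde{X}_\Delta}$ and that $\dim X = \dim \widetilde{X}_\Delta$. Your additional bookkeeping (identifying the components of $\widetilde{X}_\Delta \setminus U$ with $\cC_\Delta$, matching $Y$ with $Y_\cS^\Delta$, and noting the harmless enlargement of the removed locus) is all accurate and simply makes explicit what the paper leaves implicit.
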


\begin{proof}
This follows from Proposition \ref{stratumvolume}, noting that $g^*(\bL)$ is the class of $\bA^1_{\widetilde{X}_\Delta}$ in $\sM_{\widetilde{X}_\Delta}$ and that $\dim X = \dim \widetilde{X}_\Delta$.
\end{proof}

For each $\cS \subset \cC(\widetilde{X} \setminus U)$ such that $\bigcap_{D \in \cS} D \cap \widetilde{X}_\Delta \neq \emptyset$, set $Y_\cS$ to be $Y_\cS^\Delta$ but considered as a scheme over $X$ using $g: \widetilde{X}_\Delta \to X$, and for each $w \in N$, set
\[
	F_\cS(w) = \sum_{\substack{\bn = (n_D)_D \in \Z_{>0}^\cS \\ \sum_{D \in \cS} n_D \val_D|_M = w}} \bL^{-\sum_{D \in \cS} n_D(m_D+1)}.
\]
Note that as in the proof of Theorem \ref{constructibility}(\ref{constructibilityrational}), the sum defining $F_\cS(w)$ is finite, so it is a well defined element of $\sM_X$. Also
\[
	\sum_{w \in N} F_\cS(w) \bx^w =  \prod_{D \in \cS} \frac{ \bL^{-(m_D+1)}\bx^{\val_D|_M}}{1 - \bL^{-(m_D+1)}\bx^{\val_D|_M} }.
\]
We now complete the proof of Theorem \ref{constructibility}(\ref{fibervolume}).

\begin{proposition}
For each $w \in N$,
\[
	\mu_X(\trop^{-1}(w)) = \bL^{-\dim X} \sum_{\substack{\cS \subset \cC(\widetilde{X} \setminus U) \\ \bigcap_{D \in \cS} D \cap \widetilde{X}_\Delta \neq \emptyset}} (\bL-1)^{|\cS|} [Y_\cS] F_\cS(w) \in \sM_X.
\]
\end{proposition}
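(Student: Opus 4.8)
The plan is to apply the motivic change of variables formula to the proper morphism $g : \widetilde{X}_\Delta \to X$ extending $U \hookrightarrow X$, with $C = \trop^{-1}(w) \subset \sL(X)$, with $C' = B_w \subset \sL(\widetilde{X}_\Delta)$, and with $\alpha : C \to \Z$ the constant function $0$. All the hypotheses are available from the preceding results: $\trop^{-1}(w)$ is constructible by the proposition just proved; $B_w$ is a finite disjoint union of the constructible sets $A_\bn$ by Lemmas \ref{constructibleunion} and \ref{constructiblestrata}, hence constructible; Lemma \ref{constructiblebijection} gives the required bijection on $k'$-points for every field extension $k'/k$; and $B_w \cap \ordjac_g^{-1}(\infty) = \emptyset$ because $\ordjac_g$ is finite on each $A_\bn$ by Lemma \ref{ordjac}. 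Since $\alpha = 0$ gives $\int_C \bL^{-\alpha}\dmu_X = \mu_X(\trop^{-1}(w))$ and $\beta = \alpha \circ \sL(g) + \ordjac_g = \ordjac_g$, the formula reads
\[
	\mu_X(\trop^{-1}(w)) = g_! \int_{B_w} (g^*\bL)^{-\ordjac_g}\,\dmu_{\widetilde{X}_\Delta} \in \sM_X .
\]

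Next I would evaluate the integral by decomposing $B_w$ according to Lemma \ref{constructibleunion}: it is the finite disjoint union of the $A_\bn$ over subsets $\cS \subset \cC(\widetilde{X}\setminus U)$ with $\bigcap_{D\in\cS} D \cap \widetilde{X}_\Delta \neq \emptyset$ and over $\bn = (n_D)_D \in \Z_{>0}^\cS$ with $\sum_{D\in\cS} n_D \val_D|_M = w$. On each $A_\bn$ the function $\ordjac_g$ is constant equal to $\sum_{D\in\cS} n_D m_D$ by Lemma \ref{ordjac}, and $\mu_{\widetilde{X}_\Delta}(A_\bn) = g^*\bigl(\bL^{-\dim X}(\bL-1)^{|\cS|}\bL^{-\sum_{D\in\cS} n_D}\bigr)[Y_\cS^\Delta]$ by Lemma \ref{constructiblestrata}. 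Using additivity of the motivic integral over these disjoint pieces and combining the two exponents of $\bL$, this gives
\[
	\int_{B_w} (g^*\bL)^{-\ordjac_g}\,\dmu_{\widetilde{X}_\Delta} = \sum_{\cS}\ \sum_{\bn} g^*\bigl(\bL^{-\dim X}(\bL-1)^{|\cS|}\bL^{-\sum_{D\in\cS} n_D(m_D+1)}\bigr)[Y_\cS^\Delta] ,
\]
a finite sum by the properness statement in Theorem \ref{gtropproper}(\ref{gtropisproper}).

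Finally I would apply $g_!$ together with the projection formula for $(g_!, g^*)$. Since $Y_\cS$ is by definition $Y_\cS^\Delta$ viewed as an $X$-scheme through $g$, we have $g_!([Y_\cS^\Delta]) = [Y_\cS]$, so each summand pushes forward to $\bL^{-\dim X}(\bL-1)^{|\cS|}\bL^{-\sum_{D\in\cS} n_D(m_D+1)}[Y_\cS]$. Grouping the inner sum over $\bn \in \Z_{>0}^\cS$ with $\sum_{D\in\cS} n_D \val_D|_M = w$ collects the factor $\bL^{-\sum_{D\in\cS} n_D(m_D+1)}$ into $F_\cS(w)$, and summing over the admissible $\cS$ yields exactly
\[
	\mu_X(\trop^{-1}(w)) = \bL^{-\dim X} \sum_{\substack{\cS \subset \cC(\widetilde{X} \setminus U) \\ \bigcap_{D \in \cS} D \cap \widetilde{X}_\Delta \neq \emptyset}} (\bL-1)^{|\cS|} [Y_\cS] F_\cS(w) .
\]

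I expect the only real subtlety to be the careful verification that the hypotheses of the change of variables formula hold simultaneously---in particular that $\sL(g)$ really does induce a bijection on $k'$-points between $B_w$ and $\trop^{-1}(w)$ (Lemma \ref{constructiblebijection}, which rests on $g$ being proper and restricting to the identity on $U$) and that the relevant index sets are finite (Theorem \ref{gtropproper}(\ref{gtropisproper})). Once these are in place, the rest is bookkeeping: assembling Lemmas \ref{ordjac}, \ref{constructibleunion}, and \ref{constructiblestrata} and pushing forward with the projection formula.
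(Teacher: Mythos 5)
Your proposal is correct and follows essentially the same route as the paper: apply the motivic change of variables formula to $g:\widetilde{X}_\Delta \to X$ with $C=\trop^{-1}(w)$, $C'=B_w$, and $\alpha=0$, then decompose $B_w$ into the $A_\bn$ via Lemma \ref{constructibleunion}, evaluate each piece with Lemmas \ref{ordjac} and \ref{constructiblestrata}, and push forward with the projection formula to assemble the $F_\cS(w)$. The hypothesis checks you flag (the bijection on $k'$-points, finiteness of the index sets via Theorem \ref{gtropproper}(\ref{gtropisproper})) are exactly the points the paper relies on.
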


\begin{proof}
Note that the sums that will appear in this proof are all finite by Theorem \ref{gtropproper}(\ref{gtropisproper}). Let $w \in N$. By Lemmas \ref{constructibleunion} and \ref{constructiblestrata}, $B_w$ is a constructible subset of $\sL(\widetilde{X}_\Delta)$. Thus by Lemma \ref{constructiblebijection} and the change of variables formula,
\[
	\mu_X(\trop^{-1}(w)) = g_! \int_{B_w} (g^*\bL)^{-\ordjac_g} \dmu_{\widetilde{X}_\Delta}.
\]
By Lemmas \ref{ordjac} and \ref{constructibleunion}
\begin{align*}
	&g_!\int_{B_w} (g^*\bL)^{-\ordjac_g} \dmu_{\widetilde{X}_\Delta} =\\
	  &\sum_{\substack{\cS \subset \cC(\widetilde{X} \setminus U) \\ \bigcap_{D \in \cS} D \cap \widetilde{X}_\Delta \neq \emptyset}}\left( \sum_{\substack{\bn = (n_D)_D \in \Z_{>0}^\cS \\ \sum_{D \in \cS} n_D \val_D|_M = w}} g_!\left((g^*\bL)^{-\sum_{D\in \cS} n_D m_D} \mu_{\widetilde{X}_\Delta}(A_\bn)\right)\right).
\end{align*}
By Lemma \ref{constructiblestrata} and the projection formula, for each $\cS \subset \cC(\widetilde{X} \setminus U)$ such that $\bigcap_{D \in \cS} D \cap \widetilde{X}_\Delta \neq \emptyset$,
\begin{align*}
	&\sum_{\substack{\bn = (n_D)_D \in \Z_{>0}^\cS \\ \sum_{D \in \cS} n_D \val_D|_M = w}}g_!\left((g^*\bL)^{-\sum_{D\in \cS} n_D m_D} \mu_{\widetilde{X}_\Delta}(A_\bn)\right) =\\
	&\sum_{\substack{\bn = (n_D)_D \in \Z_{>0}^\cS \\ \sum_{D \in \cS} n_D \val_D|_M = w}} \bL^{-\dim X} (\bL -1)^{|\cS|}  \bL^{-\sum_{D \in \cS} n_D(m_D+1)} [Y_\cS]
\end{align*}
which by construction is equal to
\[
	\bL^{-\dim X} (\bL-1)^{|\cS|} [Y_\cS] F_\cS(w).
\]
Therefore,
\[
	\mu_X(\trop^{-1}(w)) = \bL^{-\dim X} \sum_{\substack{\cS \subset \cC(\widetilde{X} \setminus U) \\ \bigcap_{D \in \cS} D \cap \widetilde{X}_\Delta \neq \emptyset}} (\bL-1)^{|\cS|} [Y_\cS] F_\cS(w).
\]
\end{proof}

\section{$\Delta$-Compatibility of a Cone Complex}
\label{compatibilityconecomplex}

In this section, we will prove Theorem \ref{compatiblemodification}. We begin by defining a combinatorial analog of Definition \ref{has_gtrop}.

\begin{definition}
Let $\Sigma = (|\Sigma|, (\sigma^\alpha, M^\alpha)_\alpha)$ be a cone complex with integral structure, and for each index $\alpha$, let $N^\alpha = (M^\alpha)^\vee$. Let $N$ be a lattice and let $\varphi: |\Sigma| \to N_\R$ be a map such that for each index $\alpha$, the restriction $\varphi|_{\sigma^\alpha}: \sigma^\alpha \to N_\R$ is induced by a map of lattices $N^\alpha \to N$.

We say that $\Sigma$ \emph{has immersive geometric tropicalization with respect to} $\varphi$ if for each index $\alpha$, the map $\varphi|_{\sigma^\alpha}: \sigma^\alpha \to N_\R$ is injective.

Let $\sigma$ be a cone in $N$. Then we say $\Sigma$ \emph{has $\sigma$-compatible geometric tropicalization with respect to} $\varphi$ if for each index $\alpha$, the cone $\varphi^{-1}(\sigma) \cap \sigma^\alpha$ is a face of $\sigma^\alpha$.

Let $\Delta$ be a fan in $N$. Then we say that $\Sigma$ \emph{has $\Delta$-compatible geometric tropicalization with respect to} $\varphi$ if $\Sigma$ has $\sigma$-compatible geometric tropicalization with respect to $\varphi$ for each $\sigma \in \Delta$.
\end{definition}

\begin{remark}
\label{compatiblecomplexcompatiblecompactification}
Let $T$ be an algebraic torus with co-character lattice $N$, let $\Delta$ be a fan in $N$, let $U \hookrightarrow T$ be a smooth closed subvariety, let $U \subset X$ be an open immersion into a smooth complete variety $X$ such that $X \setminus U$ is a simple normal crossing divisor, and let $\Sigma$ be the cone complex with integral structure associated to the toroidal embedding $U \subset X$.

Then by Proposition \ref{smoothgtrop}, $U \subset X$ has immersive geometric tropicalization with respect to $U \hookrightarrow T$ if and only if $\Sigma$ has immersive geometric tropicalization with respect to $\gtrop$. Similarly, $U \subset X$ has $\Delta$-compatible geometric tropicalization with respect to $U \hookrightarrow T$ if and only if $\Sigma$ has $\Delta$-compatible geometric tropicalization with respect to $\gtrop$.
\end{remark}

\subsection{Some Lemmas on Polyhedral Cones}

In this subsection, we will prove some lemmas that will be used in the remainder of Section \ref{compatibilityconecomplex}. Let $N$ be a lattice. Note that we will use $\preceq$ to denote the partial relation of inclusion of a face into a cone, and we will use $\wedge$ to denote the operation of taking common refinement of fans. 

We first recall the following well known result, see for example \cite[III. Theorem 2.8]{Ewald}.

\begin{theorem}
Let $\Delta$ be a fan in $N$. Then there exists a fan $\overline{\Delta}$ in $N$ such that $|\overline{\Delta}| = N_\R$ and $\Delta \subset \overline{\Delta}$.
\end{theorem}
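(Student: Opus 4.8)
The plan is to prove this by the standard fan-completion procedure, organized as an induction on $n = \dim N_\R$. The case $n \leq 1$ is immediate: a fan in a lattice of rank $\leq 1$ is completed by adjoining the at most two missing rays. For $n \geq 2$, I would first reduce to the case that every maximal cone of $\Delta$ is $n$-dimensional. If $\sigma \in \Delta$ is a maximal cone with $\dim \sigma < n$, choose a primitive $v \in N$ with $v \notin \mathrm{span}(\sigma)$ that is in general position relative to all cones of $\Delta$, and adjoin to $\Delta$ the cones $\sigma + \R_{\geq 0} v$ and $\sigma + \R_{\geq 0}(-v)$ together with all of their faces. For $v$ avoiding a suitable finite union of proper linear subspaces of $N_\R$ determined by $\Delta$, the enlarged collection is again a fan (all pairwise intersections remain common faces), it contains $\Delta$, and the dimension of the maximal cone through $\sigma$ has strictly increased; iterating finitely often yields a fan $\Delta_1 \supseteq \Delta$ all of whose maximal cones are $n$-dimensional.

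With $\Delta_1$ in hand, if $|\Delta_1| = N_\R$ we are done. Otherwise pick a primitive lattice vector $v_0$ lying in the interior of the complement $N_\R \setminus |\Delta_1|$; then $\R_{\geq 0}v_0$ meets $|\Delta_1|$ only at the origin, so $\Delta_1 \cup \{\R_{\geq 0}v_0, \{0\}\}$ is still a fan. I would then cone off from $v_0$: for each facet $F$ of a maximal cone of $\Delta_1$ whose relative interior is ``visible'' from $v_0$ through the complement, adjoin the cone $F + \R_{\geq 0}v_0$ and all of its faces. One checks this produces a fan $\Delta_2 \supseteq \Delta_1$ with $|\Delta_2| \supsetneq |\Delta_1|$, the newly covered support containing a full-dimensional region bounded away from the origin. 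Iterating gives an increasing chain of fans containing $\Delta$ whose supports exhaust $N_\R$; since each step swallows a region of the unit sphere $S^{n-1}$ of definite ``size'', a compactness argument on $S^{n-1}$ shows the chain stabilizes at a complete fan $\overline{\Delta} \supseteq \Delta$.

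The main obstacle is the general-position bookkeeping: both in the thickening step and in the choice of $v_0$ and the coning-off, one must verify that the newly adjoined cones meet each other and the old cones in common faces, and small examples in $N_\R = \R^2$ show that a careless choice of the auxiliary vectors breaks the fan axioms. Establishing that a valid choice always exists and that the iteration terminates is exactly the content of \cite[III. Theorem 2.8]{Ewald}, which I would cite; the outline above records why the statement is true.
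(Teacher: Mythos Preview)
The paper does not prove this statement; it merely records it as a well-known fact with the reference \cite[III.~Theorem~2.8]{Ewald}. Your proposal likewise ultimately defers to that same reference after sketching the standard completion procedure, so your treatment is consistent with (and in fact more detailed than) the paper's.
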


We will now prove two elementary lemmas.

\begin{lemma}
\label{subconeface}
Let $\gamma, \gamma'$ be not necessarily pointed cones in $N$ with $\gamma' \subset \gamma$, and let $\nu \preceq \gamma$. Then
\[
	\gamma' \cap \nu \preceq \gamma'.
\]
\end{lemma}

\begin{proof}
Let $u \in \gamma^\vee$ be such that $\nu = \gamma \cap u^\bot$. Then because $u \in (\gamma')^\vee$,
\[
	\gamma' \cap \nu = \gamma' \cap (\gamma \cap u^\bot) = \gamma' \cap u^\bot \preceq \gamma'.
\]
\end{proof}

\begin{lemma}
\label{convexunionfaces}
Let $\gamma$ be a cone in $N$ and $\nu_1, \dots, \nu_r \preceq \gamma$ such that $\nu = \bigcup_{i=1}^r \nu_i$ is a polyhedral cone in $N_\R$. Then
\[
	\nu \preceq \gamma.
\]
\end{lemma}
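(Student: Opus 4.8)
The plan is to produce a single linear functional $u \in \gamma^\vee$ such that $\nu = \gamma \cap u^\bot$, which is exactly what it means for $\nu$ to be a face of $\gamma$. Each $\nu_i$ is a face of $\gamma$, so there is $u_i \in \gamma^\vee$ with $\nu_i = \gamma \cap u_i^\bot$. The natural candidate is $u = \sum_{i=1}^r u_i$ (or a suitable positive combination). Clearly $u \in \gamma^\vee$ since $\gamma^\vee$ is closed under addition, and for $x \in \gamma$ we have $\langle u, x\rangle = \sum_i \langle u_i, x\rangle$ with every summand $\geq 0$; hence $\langle u, x\rangle = 0$ if and only if $\langle u_i, x\rangle = 0$ for every $i$, i.e. if and only if $x \in \bigcap_i \nu_i$. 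So $\gamma \cap u^\bot = \bigcap_{i=1}^r \nu_i$, which is a face of $\gamma$, but in general this is \emph{smaller} than $\nu = \bigcup_i \nu_i$, so this crude choice does not immediately work.

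The fix is to use the convexity hypothesis on $\nu$. First I would reduce to the case where $\nu$ is not contained in any proper face of $\gamma$: replace $\gamma$ by the smallest face $\gamma_0$ of $\gamma$ containing $\nu$ (the union of all $\nu_i$ lies in $\gamma_0$, and each $\nu_i \preceq \gamma_0$ by transitivity of the face relation), and it suffices to show $\nu \preceq \gamma_0$. So assume $\nu$ meets the relative interior of $\gamma$. Now among the $\nu_i$ that are maximal with respect to inclusion, pick any one, say $\nu_1$, and let $u_1 \in \gamma^\vee$ cut it out. If $\nu_1 = \gamma$ we are done (then $\nu = \gamma$). Otherwise consider any point $x$ in the relative interior of $\nu$; since $\nu$ is a convex cone and is covered by finitely many subcones $\nu_i$, the relative interior of $\nu$ meets the relative interior of some $\nu_i$, and since $\nu_i$ is convex of the same dimension as $\nu$ near that point we get $\dim \nu_i = \dim \nu$ — in fact a convex set that is a finite union of closed convex subsets must equal one of them whenever that union is convex, provided we argue dimension by dimension. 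Concretely: a polyhedral cone $\nu$ covered by finitely many polyhedral subcones $\nu_i$ has the property that $\nu = \nu_j$ for the (necessarily unique) index $j$ with $\dim \nu_j = \dim \nu$, because the relative interior of $\nu$ is connected and open in $\mathrm{span}(\nu)$, it is contained in $\bigcup_i \nu_i$, and only the full-dimensional pieces can contain a relatively open subset, and two distinct full-dimensional subcones of $\nu$ would overlap only in a lower-dimensional set, leaving points of $\mathrm{relint}(\nu)$ uncovered. Hence $\nu = \nu_j \preceq \gamma$.

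I expect the main obstacle to be the purely convex-geometric claim that \emph{a polyhedral cone equal to a finite union of polyhedral subcones must coincide with one of them} — it is intuitively clear but needs a clean argument; the slickest route is via relative interiors and a Baire-category / dimension-count observation: $\mathrm{relint}(\nu)$ is a nonempty open subset of $\mathrm{span}(\nu)$, it is covered by the finitely many closed sets $\nu_i$, so by the Baire category theorem some $\nu_i$ has nonempty interior in $\mathrm{span}(\nu)$, forcing $\dim \nu_i = \dim \nu$ and then $\nu_i \supseteq \mathrm{relint}(\nu)$, hence $\nu_i = \overline{\mathrm{relint}(\nu)} = \nu$. Once this is in hand the lemma is immediate. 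An alternative, more self-contained argument avoids Baire by using that each $\nu_i \subsetneq \nu$ with $\dim \nu_i < \dim\nu$ lies in a hyperplane of $\mathrm{span}(\nu)$, and a finite union of such hyperplanes cannot cover the cone $\nu$ (which contains an open ball of its span), so at least one $\nu_i$ is full-dimensional, and then convexity of $\nu$ together with $\nu_i \preceq \gamma$ and the reduction above gives $\nu_i = \nu$.
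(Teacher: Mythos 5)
The central convex-geometric claim you lean on --- that a polyhedral cone which is a finite union of polyhedral subcones must coincide with one of them --- is false, and both of your proposed justifications break at the same spot. Take $\nu = \gamma = \R_{\geq 0}^2$ and the two subcones $\nu_1 = \{(x,y) : x \geq y \geq 0\}$ and $\nu_2 = \{(x,y) : y \geq x \geq 0\}$: their union is all of $\nu$, both are full-dimensional, they overlap only in a ray, yet no point of $\mathrm{relint}(\nu)$ is left uncovered and neither equals $\nu$. In the Baire-category version, the step ``$\dim \nu_i = \dim \nu$ and then $\nu_i \supseteq \mathrm{relint}(\nu)$'' is a non sequitur, and in the other version the assertion that two distinct full-dimensional subcones with lower-dimensional overlap must leave points of $\mathrm{relint}(\nu)$ uncovered is simply not true. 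What saves the lemma is precisely the hypothesis you set aside at this stage: the $\nu_i$ are \emph{faces} of $\gamma$, not arbitrary polyhedral subcones (in the counterexample above, $\nu_1$ and $\nu_2$ are not faces of $\R_{\geq 0}^2$).

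Your reduction to the smallest face $\gamma_0$ of $\gamma$ containing $\nu$ can be completed correctly, but the argument must run through the face property rather than through dimension counting: $\nu$ cannot lie in the relative boundary of $\gamma_0$ (if it did, writing a point $x \in \mathrm{relint}(\nu)$ as lying in a proper face $\gamma_0 \cap u^\perp$ and using convexity of $\nu$ would force all of $\nu$ into that proper face, contradicting minimality of $\gamma_0$), so some point of $\nu$, hence some $\nu_i$, meets $\mathrm{relint}(\gamma_0)$; the only face of $\gamma_0$ meeting its relative interior is $\gamma_0$ itself, so $\nu_i = \gamma_0$ and therefore $\nu = \gamma_0 \preceq \gamma$. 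For comparison, the paper's proof is a one-liner that sidesteps all of this by using the extremality characterization of faces: a convex subcone $\nu \subseteq \gamma$ is a face if and only if $v_1 + v_2 \in \nu$ with $v_1, v_2 \in \gamma$ forces $v_1, v_2 \in \nu$; here $v_1 + v_2$ lands in some $\nu_i$, which is a face of $\gamma$, so $v_1, v_2 \in \nu_i \subseteq \nu$. You may want to adopt that route, or else repair your argument as indicated.
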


\begin{proof}
Let $v_1, v_2 \in \gamma$. Because $\nu$ is a polyhedral cone in $N_\R$, we only need to show that if $v_1+v_2 \in \nu$ then $v_1, v_2 \in \nu$. This holds because $\nu$ is a union of faces of $\gamma$.
\end{proof}

For the remainder of this subsection, let $\sigma$ be a cone in $N$, let $\tau$ be a not necessarily pointed cone in $N$, and let $\Delta$ be a fan in $N$ such that $|\Delta| \subset \sigma$ and such that for each $\eta \in \Delta$,
\[
	\eta \cap \tau \preceq \eta.
\]

We will devote the remainder of this subsection to proving the following technical lemma.
\begin{lemma}
\label{technical}
There exists a fan $\Sigma$ in $N$ such that $|\Sigma| = \sigma$, $\Delta \subset \Sigma$, and for each $\sigma' \in \Sigma$,
\[
	\sigma' \cap \tau \preceq \sigma'.
\]
\end{lemma}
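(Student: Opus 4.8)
The plan is to build $\Sigma$ in two stages: first extend $\Delta$ to a fan that covers all of $\sigma$ without worrying about $\tau$, then refine that fan so every cone meets $\tau$ in a face, while being careful not to subdivide the cones of $\Delta$ themselves. For the first stage, apply the recalled extension theorem (\cite[III. Theorem 2.8]{Ewald}) to get a complete fan $\overline{\Delta}$ in $N$ with $\Delta \subset \overline{\Delta}$, and then intersect with $\sigma$: set $\Delta_0 = \{\eta \cap \sigma : \eta \in \overline{\Delta}\}$. Since $|\Delta| \subset \sigma$ and $\Delta \subset \overline{\Delta}$, each cone of $\Delta$ already lies in $\sigma$, so it equals its own intersection with $\sigma$ and hence $\Delta \subset \Delta_0$; moreover $|\Delta_0| = \sigma$. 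One subtlety: the intersections $\eta \cap \sigma$ need not form a fan (faces can fail to match up along $\partial\sigma$), so I would instead take $\Delta_0$ to be a common refinement of $\overline{\Delta}$ with the fan of faces of $\sigma$ — but this refinement could chop up cones of $\Delta$. To avoid that, note that cones of $\Delta$ lie in the interior relative to $\sigma$ in the sense that $\Delta$ is a genuine subfan; the cleanest route is: let $\Delta_0$ be the coarsest fan refining both $\overline{\Delta}$ and the face fan of $\sigma$ \emph{in the complement of $|\Delta|$}, i.e. take $\overline{\Delta} \wedge (\text{faces of }\sigma)$ and then replace, for each $\delta \in \Delta$, all the small cones inside $\delta$ by $\delta$ itself. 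That this still yields a fan follows from Lemma \ref{convexunionfaces} applied to $\delta$ as a cone with its faces coming from the refinement, together with the fact that $\delta$ was already a cone of $\overline{\Delta}$, so its subdivision cones are exactly the cones of the refinement contained in $\delta$.

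For the second stage I want to refine $\Delta_0$ so each cone meets $\tau$ in a face, again without touching $\Delta$. The key observation is the hypothesis: each $\eta \in \Delta$ satisfies $\eta \cap \tau \preceq \eta$, so the cones of $\Delta$ are already $\tau$-compatible and should be left alone. For the remaining cones of $\Delta_0$, I would use a standard barycentric-type or "cutting by a hyperplane arrangement" subdivision: pick finitely many functionals $u_1,\dots,u_s$ whose common zero locus cuts out the linear span of $\tau$ and whose sign conditions cut out $\tau$ itself (possible since $\tau$ is a rational polyhedral cone), and form the common refinement of $\Delta_0$ with the fan whose maximal cones are the closed sign-chambers of $\{u_i\}$. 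By Lemma \ref{subconeface}, intersecting any cone of this refinement with $\tau$ (which is a union of faces of the ambient sign-chamber cone) produces a face of that cone. The point where I must be careful is making sure this refinement does not subdivide the cones of $\Delta$: since each $\delta \in \Delta$ already has $\delta \cap \tau \preceq \delta$, one checks that $\delta$ is a union of sign-chambers of the $u_i$ (the functionals are chosen so their zero sets, restricted to $\delta$, cut out exactly the face $\delta\cap\tau$ and its complementary faces), and then Lemma \ref{convexunionfaces} lets me re-glue those chambers back into $\delta$, leaving $\Delta$ untouched in the final fan $\Sigma$.

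The main obstacle I anticipate is the simultaneous bookkeeping in the second stage: I need a refinement that (i) is genuinely a fan, (ii) contains $\Delta$ as a subfan, (iii) still has support exactly $\sigma$, and (iv) makes every cone $\tau$-compatible. Items (iii) and (iv) are easy given Lemmas \ref{subconeface} and \ref{convexunionfaces}, and (i) is standard for common refinements; the delicate part is (ii), i.e. arguing that the hypothesis $\eta\cap\tau\preceq\eta$ on cones of $\Delta$ exactly says those cones survive the refinement. I would handle this by choosing the separating functionals $u_i$ adapted to $\Delta$: for each $\delta\in\Delta$ the face $\delta\cap\tau$ is cut out in $\delta$ by some $u\in\delta^\vee$ with $u^\perp\cap\delta = \delta\cap\tau$, and including all such $u$ (over the finitely many $\delta\in\Delta$) among the $u_i$ guarantees that refining by the $u_i$-arrangement does not cut $\delta$ across its interior — it only separates $\delta\cap\tau$ from the rest, which is already a face decomposition of $\delta$. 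Then Lemma \ref{convexunionfaces} recombines, and $\Delta\subset\Sigma$ as required.
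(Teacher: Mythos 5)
Your first stage is essentially the paper's $\widetilde{\Delta}=\overline{\Delta}\wedge\Faces(\sigma)$ and is fine; the worry about having to ``re-glue'' cones of $\Delta$ there is unfounded, since any cone of that common refinement contained in some $\delta\in\Delta$ is automatically a face of $\delta$ (Lemma \ref{subconeface} plus the fan property of $\overline{\Delta}$). The genuine gap is in your second stage. The hypothesis $\eta\cap\tau\preceq\eta$ does \emph{not} imply that the cones of $\Delta$ are unions of sign-chambers of an arrangement cutting out $\tau$, nor that the hyperplanes $u_i^{\perp}$ avoid their interiors. Concretely, take $N=\Z^2$, $\sigma=\R_{\geq 0}^2$, $\Delta=\Faces(\sigma)$, and $\tau=\R_{\geq 0}\cdot(-1,-1)$: then $\sigma\cap\tau=\{0\}\preceq\sigma$, so the hypothesis holds, but the linear span of $\tau$ is the diagonal $\{x=y\}$, which must occur among your $u_i^{\perp}$ and which slices $\sigma$ through its interior. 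Adding functionals ``adapted to $\Delta$'' only refines the arrangement further, so it cannot repair this. The fallback of re-gluing the pieces of $\delta$ back into $\delta$ is not a fan operation in general either: if the arrangement subdivides a facet $F$ of $\delta$ shared with a cone of $\Delta_0\setminus\Delta$ that remains subdivided, that neighbor meets the re-glued $\delta$ in a proper subcone of $F$, which is not a face of $\delta$.

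The paper sidesteps this by never introducing a global arrangement for $\tau$. It instead builds a fan structure on $\tau$ that is already compatible with $\Delta$: it sets $\Theta=\overline{\Delta}\wedge\Faces(\tau)$ and uses the hypothesis $\eta\cap\tau\preceq\eta$ exactly once, to prove that $\Theta\cup\Delta$ is a fan. It then extends $\Theta\cup\Delta$ to a complete fan $\overline{\Theta}$ and takes $\Sigma=\overline{\Theta}\wedge\bigl(\overline{\Delta}\wedge\Faces(\sigma)\bigr)$. Each $\eta\in\Delta$ lies in both factors, hence survives in $\Sigma$; and since $\Sigma$ refines $\overline{\Theta}\supset\Theta$ with $|\Theta|=\tau$, each $\sigma'\in\Sigma$ satisfies $\sigma'\cap\tau=\bigcup_{\nu\in\Theta}\sigma'\cap\nu$, a union of faces of $\sigma'$, hence a face by Lemma \ref{convexunionfaces}. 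If you replace your sign-chamber fan by this $\Theta$ (and prove the compatibility of $\Theta$ with $\Delta$, which is where the hypothesis actually enters), your outline goes through.
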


First we fix some notation. Fix a fan $\overline{\Delta}$ in $N$ such that $|\overline{\Delta}| = N_\R$ and $\Delta \subset \overline{\Delta}$. Let $\Theta = \overline{\Delta} \wedge \Faces(\tau)$ be the common refinement of $\overline{\Delta}$ and $\Faces(\tau)$. Note that because each cone in $\overline{\Delta}$ is pointed, so is each cone in $\Theta$. Thus $\Theta$ is a fan in $N$ with $|\Theta| = \tau$.

\begin{lemma}
The collection $\Theta \cup \Delta$ is a fan in $N$.
\end{lemma}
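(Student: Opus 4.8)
The goal is to show $\Theta \cup \Delta$ is a fan, which amounts to checking two things: every cone in the collection is pointed (clear, since cones in $\Theta$ are pointed by the preceding remark and cones in $\Delta$ are pointed by standing hypothesis), and for any two cones in the collection, their intersection is a face of each. The case of two cones both in $\Theta$, or both in $\Delta$, is immediate since $\Theta$ and $\Delta$ are each fans. So the plan is to handle the mixed case: take $\eta \in \Delta$ and $\theta \in \Theta$ and show $\eta \cap \theta$ is a face of $\eta$ and a face of $\theta$.

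First I would exploit the definition $\Theta = \overline{\Delta} \wedge \Faces(\tau)$: every $\theta \in \Theta$ has the form $\theta = \bar\delta \cap \nu$ for some $\bar\delta \in \overline{\Delta}$ and some face $\nu \preceq \tau$. Now $\eta \in \Delta \subset \overline{\Delta}$, so $\eta \cap \bar\delta$ is a common face of $\eta$ and $\bar\delta$ since $\overline{\Delta}$ is a fan; in particular $\eta \cap \bar\delta \preceq \eta$. Then $\eta \cap \theta = (\eta \cap \bar\delta) \cap \nu$. Since $\nu \preceq \tau$, intersecting with $\nu$ is the same as intersecting with $\tau \cap u^\perp$ for a suitable $u \in \tau^\vee$; but I want to relate this to the hypothesis $\eta \cap \tau \preceq \eta$. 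The cleanest route: write $\eta \cap \theta = (\eta \cap \tau) \cap (\bar\delta \cap \nu')$ where... — more carefully, $\eta \cap \theta \subset \eta \cap \tau$, and $\eta \cap \tau \preceq \eta$ by hypothesis, so by Lemma \ref{subconeface} (applied with $\gamma = \eta$, taking the subcone to be $\eta \cap \tau$ and intersecting with the face $\eta \cap \bar\delta$ of $\eta$) combined with the face relation $\eta\cap\bar\delta \preceq \eta$, I can realize $\eta \cap \theta$ as an intersection of $\eta\cap\tau$ with faces. Explicitly: $\eta \cap \theta = (\eta \cap \tau) \cap \bar\delta \cap \nu$; since $\eta\cap\tau \preceq \eta$ and $\bar\delta$ is a face of the larger fan $\overline\Delta$, and $\nu \preceq \tau$, I can chain face-of-face relations. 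The key algebraic fact I would lean on is the standard one that a face of a face is a face, together with Lemma \ref{subconeface} which says intersecting a subcone with a face of the ambient cone yields a face of the subcone.

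For the symmetric claim that $\eta \cap \theta \preceq \theta$: write $\theta = \bar\delta \cap \nu$ with $\nu \preceq \tau$. Then $\eta \cap \theta = \eta \cap \bar\delta \cap \nu$. Now $\eta \cap \bar\delta \preceq \bar\delta$ (both in $\overline\Delta$), so intersecting $\theta = \bar\delta \cap \nu \subset \bar\delta$ with the face $\eta \cap \bar\delta$ of $\bar\delta$ gives, by Lemma \ref{subconeface}, a face of $\theta$. Thus $\eta \cap \theta = \theta \cap (\eta \cap \bar\delta) \preceq \theta$. Finally I should note closure under taking faces: a face of a cone in $\Theta$ lies in $\Theta$, and a face of a cone in $\Delta$ lies in $\Delta$, so $\Theta \cup \Delta$ is closed under passing to faces; combined with the intersection property just verified, this shows $\Theta \cup \Delta$ is a fan.

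The main obstacle I anticipate is purely bookkeeping: juggling the three-fold intersection $\eta \cap \bar\delta \cap \nu$ and making sure each reduction step genuinely produces a face of the intended cone rather than merely a subcone — in particular being careful that the hypothesis $\eta \cap \tau \preceq \eta$ (not just $\eta \cap \tau$ being some subcone) is used, since without it the conclusion can fail. Everything else is a routine application of Lemma \ref{subconeface} and transitivity of the face relation, so the proof should be short once the right order of intersections is chosen.
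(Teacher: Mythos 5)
Your proposal is correct and follows essentially the same route as the paper: reduce to the mixed case $\eta \in \Delta$, $\theta = \bar\delta \cap \nu$ with $\bar\delta \in \overline{\Delta}$ and $\nu \preceq \tau$, and then combine Lemma \ref{subconeface}, the fan property of $\overline{\Delta}$, and the hypothesis $\eta \cap \tau \preceq \eta$. The only organizational difference is that the paper first shows $\nu \cap \eta \preceq \eta \cap \tau \preceq \eta$, hence $\nu \cap \eta \in \Delta \subset \overline{\Delta}$, and then invokes the fan property of $\overline{\Delta}$ for both face relations, whereas you chain the face relations directly; your sketched chain for $\eta \cap \theta \preceq \eta$ does close up (via $\eta\cap\theta = (\eta\cap\bar\delta\cap\tau)\cap\nu \preceq \eta\cap\bar\delta\cap\tau \preceq \eta\cap\bar\delta \preceq \eta$), so no gap.
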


\begin{proof}
Let $\gamma \in \overline{\Delta}$, $\gamma' \preceq \tau$, and $\eta \in \Delta$. We need to show that $(\gamma \cap \gamma') \cap \eta$ is a common face of $\gamma \cap \gamma'$ and $\eta$.

By Lemma \ref{subconeface},
\[
	\gamma' \cap \eta = (\eta \cap \tau) \cap \gamma' \preceq \eta \cap \tau.
\]
Thus by the hypotheses on $\Delta$,
\[
	\gamma' \cap \eta \preceq \eta \cap \tau \preceq \eta,
\]
so $\gamma' \cap \eta \in \Delta \subset \overline{\Delta}$. Thus
\[
	(\gamma \cap \gamma') \cap \eta  \preceq \gamma' \cap \eta \preceq \eta.
\]
Similarly, $(\gamma \cap \gamma') \cap \eta \preceq \gamma$, so by Lemma \ref{subconeface},
\[
	(\gamma \cap \gamma') \cap \eta = (\gamma \cap \gamma') \cap ((\gamma \cap \gamma') \cap \eta) \preceq \gamma \cap \gamma',
\]
and we are done.
\end{proof}

Now fix a fan $\overline{\Theta}$ in $N$ such that $|\overline{\Theta}| = N_\R$ and $\Theta \cup \Delta \subset \overline{\Theta}$. Set $\widetilde{\Delta} = \overline{\Delta} \wedge \Faces(\sigma)$ and $\Sigma = \overline{\Theta} \wedge \widetilde{\Delta}$.

\begin{lemma}
\label{subdivisionintersection}
For any $\sigma' \in \Sigma$ and $\nu \in \Theta$,
\[
	\sigma' \cap \nu \preceq \sigma'.
\]
\end{lemma}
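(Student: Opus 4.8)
The plan is to reduce this to Lemma~\ref{subconeface} after unwinding the definition of $\Sigma$ as a common refinement. First I would write a general cone $\sigma' \in \Sigma = \overline{\Theta} \wedge \widetilde{\Delta}$ in the form $\sigma' = \gamma \cap \delta$ with $\gamma \in \overline{\Theta}$ and $\delta \in \widetilde{\Delta}$; this is just the definition of common refinement. In particular $\sigma' \subset \gamma$.

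Next I would observe that any $\nu \in \Theta$ is also a cone of $\overline{\Theta}$: indeed $\nu \in \Theta \subset \Theta \cup \Delta \subset \overline{\Theta}$ by the choice of $\overline{\Theta}$. Hence $\gamma$ and $\nu$ are two cones of the fan $\overline{\Theta}$, so their intersection is a common face; in particular $\gamma \cap \nu \preceq \gamma$.

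Now I would apply Lemma~\ref{subconeface} with ambient cone $\gamma$, subcone $\sigma' = \gamma \cap \delta \subset \gamma$, and face $\gamma \cap \nu \preceq \gamma$, obtaining $\sigma' \cap (\gamma \cap \nu) \preceq \sigma'$. Since $\sigma' \subset \gamma$ we have $\sigma' \cap (\gamma \cap \nu) = \sigma' \cap \nu$, and the lemma follows. Note this argument uses only that $\overline{\Theta}$ is a fan containing $\Theta$ (together with Lemma~\ref{subconeface}); the hypotheses relating $\Delta$ and $\tau$ are not needed here, as they were already spent in showing that $\Theta \cup \Delta$ is a fan.

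\textbf{Main obstacle.} Honestly this step is essentially immediate once the reductions above are made; the only point requiring a moment's care is to make sure each cone involved is genuinely a cone of the fan $\overline{\Theta}$ (so that intersections are faces), which is why the inclusion $\Theta \subset \overline{\Theta}$ and the expression $\sigma' = \gamma \cap \delta$ are recorded explicitly before invoking Lemma~\ref{subconeface}.
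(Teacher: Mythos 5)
Your proof is correct and follows essentially the same route as the paper's: both write $\sigma' = \gamma \cap \gamma'$ with $\gamma \in \overline{\Theta}$ and $\gamma' \in \widetilde{\Delta}$ and use that $\nu \in \Theta \subset \overline{\Theta}$, so that $\gamma \cap \nu$ is a face of $\gamma$. The only (inessential) difference is the last step: the paper notes that $\sigma' \cap \nu = (\gamma \cap \nu) \cap \gamma'$ is itself a cone of $\Sigma$ and concludes from the fan property of $\Sigma$, whereas you invoke Lemma \ref{subconeface} directly; both are valid.
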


\begin{proof}
Let $\gamma \in \overline{\Theta}, \gamma' \in \widetilde{\Delta}$ be such that $\sigma' = \gamma \cap \gamma'$. Then $\gamma \cap \nu \in \overline{\Theta}$ so
\[
	\sigma' \cap \nu = (\gamma \cap \gamma') \cap \nu = (\gamma \cap \nu) \cap \gamma' \in \Sigma.
\]
Thus
\[
	\sigma' \cap \nu = (\sigma' \cap \nu) \cap \sigma' \preceq \sigma'.
\]
\end{proof}

We now finish the proof of Lemma \ref{technical}.

\begin{proof}
By construction
\[
	|\Sigma| = \sigma.
\]
Let $\eta \in \Delta$. Then $\eta = \eta \cap \sigma \in \widetilde{\Delta}$ and $\eta \in \overline{\Theta}$, so
\[
	\eta = \eta \cap \eta \in \Sigma.
\]
Therefore,
\[
	\Delta \subset \Sigma.
\]
Let $\sigma' \in \Sigma$. Then by Lemmas \ref{convexunionfaces} and \ref{subdivisionintersection},
\[
	\sigma' \cap \tau = \bigcup_{\nu \in \Theta} \sigma' \cap \nu \preceq \sigma'.
\]
\end{proof}

\subsection{$\Delta$-Compatible Subdivision}

Let $\Sigma = (|\Sigma|, (\sigma^\alpha, M^\alpha)_\alpha)$ be a cone complex with integral structure, and for each index $\alpha$, let $N^\alpha = (M^\alpha)^\vee$. Let $N$ be a lattice, let $\Delta$ be a fan in $N$, and let $\varphi: |\Sigma| \to N_\R$ be a map such that for each index $\alpha$, the restriction $\varphi|_{\sigma^\alpha}: \sigma^\alpha \to N_\R$ is induced by a map of lattices $N^\alpha \to N$.

\begin{lemma}
\label{keepsubdividing}
If $\sigma$ is a cone in $N$, $\Sigma'$ is a subdivision of $\Sigma$, and $\Sigma$ has $\sigma$-compatible geometric tropicalization with respect to $\varphi$, then $\Sigma'$ has $\sigma$-compatible geometric tropicalization with respect to $\varphi$.
\end{lemma}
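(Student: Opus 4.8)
The plan is to unwind what $\sigma$-compatibility means for the two cone complexes and compare face by face. Recall that $\Sigma$ having $\sigma$-compatible geometric tropicalization with respect to $\varphi$ means that for every cone $\sigma^\alpha$ of $\Sigma$, the set $\varphi^{-1}(\sigma)\cap\sigma^\alpha$ is a face of $\sigma^\alpha$. I must show the analogous statement with $\Sigma'$ in place of $\Sigma$. So I would fix a cone $\rho$ of the subdivision $\Sigma'$, and let $\sigma^\alpha$ be a cone of $\Sigma$ that contains $\rho$ (such a cone exists because $\Sigma'$ subdivides $\Sigma$). Then $\rho\subseteq\sigma^\alpha$ and $\rho$ is a full-dimensional-in-its-span subcone, but not necessarily a face of $\sigma^\alpha$; nonetheless the key point is that $\varphi|_{\sigma^\alpha}$ restricted to $\rho$ agrees with $\varphi|_{\sigma^\beta}$ for cones $\sigma^\beta$ of $\Sigma'$ inside $\rho$, since all these restrictions are induced by the single linear map $N^\alpha\to N$ attached to $\sigma^\alpha$ (compatibly with the integral structure on the subdivision).

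First I would record the elementary fact that $\varphi^{-1}(\sigma)\cap\rho = \bigl(\varphi^{-1}(\sigma)\cap\sigma^\alpha\bigr)\cap\rho$, since $\rho\subseteq\sigma^\alpha$. By hypothesis $\nu:=\varphi^{-1}(\sigma)\cap\sigma^\alpha$ is a face of $\sigma^\alpha$, and $\rho$ is a subcone of $\sigma^\alpha$, so this is exactly the situation of Lemma \ref{subconeface}: with $\gamma=\sigma^\alpha$, $\gamma'=\rho$, and the face $\nu\preceq\gamma$, we get $\rho\cap\nu\preceq\rho$. Hence $\varphi^{-1}(\sigma)\cap\rho$ is a face of $\rho$, which is precisely what is needed.

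The only subtlety — and the step I expect to require the most care in the write-up — is making sure the linear map used to compute $\varphi$ on $\rho$ is the restriction of the one on $\sigma^\alpha$, rather than something that could differ on a proper subdivided cone. This is really built into the definition of a subdivision of a cone complex with integral structure: a cone $\rho$ of $\Sigma'$ sits inside a unique minimal cone $\sigma^\alpha$ of $\Sigma$, the lattice $N^\rho$ maps to $N^\alpha$ compatibly with $\sigma^\rho\hookrightarrow\sigma^\alpha$, and the map $\varphi|_{\sigma^\rho}$ is by hypothesis induced by a lattice map $N^\rho\to N$ which must agree with the composite $N^\rho\to N^\alpha\to N$ because both induce the same map on the (full-dimensional-in-its-span) cone $\sigma^\rho\subseteq\sigma^\alpha$, hence on its linear span. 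Once this identification is in hand, $\varphi^{-1}(\sigma)\cap\rho$ computed intrinsically on $\rho$ coincides with $\bigl(\varphi^{-1}(\sigma)\cap\sigma^\alpha\bigr)\cap\rho$, and Lemma \ref{subconeface} finishes the proof.
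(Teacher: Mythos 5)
Your argument is correct and is exactly the paper's proof: the paper dispatches this lemma in one line by citing Lemma \ref{subconeface}, applied precisely as you do with $\gamma=\sigma^\alpha$, $\gamma'=\rho$, and $\nu=\varphi^{-1}(\sigma)\cap\sigma^\alpha$. Your extra care about the linear maps agreeing on subdivided cones is a correct elaboration of what the paper leaves implicit.
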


\begin{proof}
This follows from Lemma \ref{subconeface}.
\end{proof}

We now prove the combinatorial analog of Theorem \ref{compatiblemodification}.

\begin{proposition}
\label{combinatorialcompatiblemodification}
There exists a unimodular subdivision $\widetilde{\Sigma}$ of $\Sigma$ such that $\widetilde{\Sigma}$ has $\Delta$-compatible geometric tropicalization with respect to $\varphi$.
\end{proposition}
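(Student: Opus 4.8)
The plan is to construct $\widetilde\Sigma$ in two steps: first produce a subdivision $\Sigma'$ of $\Sigma$ that has $\Delta$-compatible geometric tropicalization with respect to $\varphi$, and then pass to any unimodular refinement of $\Sigma'$, which by Lemma~\ref{keepsubdividing} (applied to each $\sigma \in \Delta$) will still be $\Delta$-compatible with respect to $\varphi$. Since subdivisions compose, such a refinement is a unimodular subdivision of $\Sigma$.

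To build $\Sigma'$, I would first extend $\Delta$ to a fan $\overline\Delta$ in $N$ with $|\overline\Delta| = N_\R$, using the theorem recalled above. For each index $\alpha$, let $\ell_\alpha \colon N^\alpha \to N$ be the lattice map inducing $\varphi|_{\sigma^\alpha}$, let $\ell_\alpha^{-1}(\overline\Delta) = \{\ell_\alpha^{-1}(\eta) : \eta \in \overline\Delta\}$ — a fan of not-necessarily-pointed cones with support $N^\alpha_\R$ — and set $\Sigma'_\alpha = \ell_\alpha^{-1}(\overline\Delta) \wedge \Faces(\sigma^\alpha)$. Because $\sigma^\alpha$ is pointed, every cone of $\Sigma'_\alpha$ is pointed, and $\Sigma'_\alpha$ is a subdivision of $\sigma^\alpha$, to which we give the integral structure induced from $N^\alpha$. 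Since $\varphi$ is globally defined and linear on each cone of $\Sigma$, we have $\ell_\beta = \ell_\alpha|_{\sigma^\beta}$ whenever $\sigma^\beta \preceq \sigma^\alpha$, so (using Lemma~\ref{subconeface}) the subdivisions $\Sigma'_\alpha$ restrict compatibly to common faces and glue to a subdivision $\Sigma' \to \Sigma$ of cone complexes with integral structure.

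It then remains to check that $\Sigma'$ has $\overline\Delta$-compatible, hence $\Delta$-compatible, geometric tropicalization with respect to $\varphi$. By construction, each cone $\sigma'$ of $\Sigma'$ satisfies $\varphi(\sigma') \subseteq \eta$ for some single $\eta \in \overline\Delta$. Given $\sigma \in \overline\Delta$, we then have $\varphi^{-1}(\sigma) \cap \sigma' = \{v \in \sigma' : \varphi(v) \in \sigma \cap \eta\}$; since $\sigma \cap \eta$ is a common face of $\sigma$ and $\eta$ in the fan $\overline\Delta$, we may write $\sigma \cap \eta = \eta \cap u^\bot$ with $u \in \eta^\vee$. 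As $\varphi(\sigma') \subseteq \eta$, the pulled-back functional $\varphi^\ast u$ lies in $(\sigma')^\vee$, so $\varphi^{-1}(\sigma) \cap \sigma' = \sigma' \cap (\varphi^\ast u)^\bot$ is a face of $\sigma'$, as required. Taking $\widetilde\Sigma$ to be any unimodular refinement of $\Sigma'$ (these exist for cone complexes with integral structure) finishes the argument via Lemma~\ref{keepsubdividing}.

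I expect the delicate point to be the gluing step: verifying that the pullback subdivisions $\Sigma'_\alpha$ are genuine subdivisions of the $\sigma^\alpha$ respecting the integral structure, and that they agree along shared faces so as to assemble into a subdivision of the whole cone complex $\Sigma$. A closely related point to watch is that $\ell_\alpha^{-1}(\overline\Delta)$ consists of non-pointed cones whenever $\ell_\alpha$ has a nontrivial kernel, with pointedness restored only after taking the common refinement with $\Faces(\sigma^\alpha)$. Everything else reduces to the elementary face lemmas \ref{subconeface} and \ref{convexunionfaces} already established, together with the standard existence of unimodular refinements.
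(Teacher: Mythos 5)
Your argument is correct, but it takes a genuinely different route from the paper's. The paper fixes one cone $\sigma \in \Delta$ at a time and builds a $\sigma$-compatible subdivision by induction on the skeleta of $\Sigma$: given a suitable subdivision of the $(\ell-1)$-skeleton, Lemma \ref{technical} extends it across each $\ell$-dimensional cone $\sigma^\alpha$ so that $\varphi^{-1}(\sigma)$ meets every new cone in a face; it then iterates over the finitely many $\sigma \in \Delta$ and finishes with toroidal resolution of singularities, invoking Lemma \ref{keepsubdividing} at each stage to preserve the compatibility already achieved. You instead complete $\Delta$ to a fan $\overline{\Delta}$ with support $N_\R$ once and for all, and on each $\sigma^\alpha$ take the canonical common refinement $\ell_\alpha^{-1}(\overline{\Delta}) \wedge \Faces(\sigma^\alpha)$. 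Because this construction is canonical and $\ell_\alpha$ restricts to $\ell_\beta$ on faces, the local subdivisions agree on shared faces and glue; moreover every cone of the result maps into a single cone of $\overline{\Delta}$, from which $\overline{\Delta}$-compatibility, hence $\Delta$-compatibility, follows for all cones of $\Delta$ simultaneously by the face computation you give with the pulled-back functional $\varphi^\ast u$. Your route dispenses with the skeleton induction and with Lemma \ref{technical} entirely, its content being absorbed into the canonicity of the pullback-and-refine construction; what the paper's cone-by-cone extension buys is the ability to prescribe the subdivision on a subcomplex in advance, a flexibility not needed for this proposition. Both arguments end the same way, passing to a unimodular refinement via toroidal resolution and Lemma \ref{keepsubdividing}.
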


\begin{proof}
Let $\sigma \in \Delta$. We will first show that there exists a subdivision $\Sigma'$ of $\Sigma$ such that $\Sigma'$ has $\sigma$-compatible geometric tropicalization with respect to $\varphi$. 

Assume that there exists a subdivision $\Sigma'_{\ell-1} = (|\Sigma'_{\ell-1}|, (\eta^\beta, L^\beta)_\beta)$ of the $\ell-1$ skeleton of $\Sigma$ such that for each index $\beta$, the cone $\varphi^{-1}(\sigma) \cap \eta^\beta$ is a face of $\eta^\beta$. Now suppose that $\sigma^\alpha$ is a cone in $\Sigma$ that is $\ell$ dimensional. Then by Lemma \ref{convexunionfaces}, the restriction $\Sigma'_{\ell-1}|_{\sigma^\alpha}$ is a fan in $N^\alpha$. Then by Lemma \ref{technical}, there exists a fan $\Sigma^\alpha$ in $N^\alpha$ such that $|\Sigma^\alpha| = \sigma^\alpha$, $\Sigma'_{\ell-1}|_{\sigma^\alpha} \subset \Sigma^\alpha$, and for each $\sigma' \in \Sigma^\alpha$, the cone $\varphi^{-1}(\sigma) \cap \sigma'$ is a face of $\sigma'$. Repeating this for all $\ell$ dimensional cones $\sigma^\alpha$ in $\Sigma$, we see there exists a subdivision $\Sigma'_\ell = (|\Sigma'_{\ell}|, (\eta^\beta, L^\beta)_\beta)$ of the $\ell$ skeleton of $\Sigma$ such that for each index $\beta$, the cone $\varphi^{-1}(\sigma) \cap \eta^\beta$ is a face of $\eta^\beta$. Thus by induction on $\ell$, there exists a subdivision $\Sigma'$ of $\Sigma$ such that $\Sigma'$ has $\sigma$-compatible geometric tropicalization with respect to $\varphi$.

Now by Lemma \ref{keepsubdividing}, repeating the above for all $\sigma \in \Delta$, there exists a subdivision $\Sigma''$ of $\Sigma$ such that $\Sigma''$ has $\Delta$-compatible geometric tropicalization with respect to $\varphi$. Now again by Lemma \ref{keepsubdividing}, using toroidal resolution of singularities, there exists a unimodular subdivision $\widetilde{\Sigma}$ of $\Sigma$ such that $\widetilde{\Sigma}$ has $\Delta$-compatible geometric tropicalization with respect to $\varphi$.
\end{proof}

\subsection{$\Delta$-Compatible Modification}

We now complete the proof of Theorem \ref{compatiblemodification}.

\begin{proof}
\begin{enumerate}[(a)]

\item This follows from Remark \ref{compatiblecomplexcompatiblecompactification} and Proposition \ref{combinatorialcompatiblemodification}.

\item By resolution of singularities, there exists an open immersion $U \subset X$ into a smooth complete variety $X$ such that $X \setminus U$ is a simple normal crossing divisor. Therefore the result follows from part (\ref{compatiblemodificationmain}).

\end{enumerate}
\end{proof}

\section{Geometric Tropicalization with a Given Map of Lattices}

In this section, we will prove Theorem \ref{gtropproper}(\ref{properisgtrop}).

Let $L$ be a lattice, let $m \in \Z_{>0}$, let $\varphi: \Z^m \to L$ be such that
\[
	\varphi_\R|_{\R_{\geq 0}^m}: \R_{\geq 0}^m \to L_\R
\]
is proper as a map of topological spaces, and let $X$ be a smooth projective variety of dimension at least $\max(m,2)$.

Set $L^\varphi = \varphi_\R(\R^m) \cap L \subset L$.

\begin{lemma}
$\varphi_\R(\R_{\geq 0}^m)$ is contained in a unimodular cone in $L^\varphi$.
\end{lemma}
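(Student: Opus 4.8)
The statement to prove is that $\varphi_\R(\R_{\geq 0}^m)$ is contained in a unimodular cone in $L^\varphi$. First I would note that since $\varphi_\R|_{\R_{\geq 0}^m}$ is proper, by the same reasoning used to prove Theorem \ref{constructibility}(\ref{constructibilityrational}) the image $\varphi_\R(\R_{\geq 0}^m) = \pos(\varphi(e_1), \dots, \varphi(e_m))$ is a rational \emph{pointed} polyhedral cone in $L_\R$; here $e_1, \dots, e_m$ are the standard basis vectors of $\Z^m$. Moreover, by construction $\varphi_\R(\R^m) = \varphi_\R(\R_{\geq 0}^m) - \varphi_\R(\R_{\geq 0}^m)$ spans $L^\varphi_\R = L^\varphi \otimes \R$, so $\varphi_\R(\R_{\geq 0}^m)$ is a full-dimensional pointed rational cone inside the lattice $L^\varphi$, which is saturated in $L$ by definition.

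**Main step.** The key point is that any pointed rational polyhedral cone $\tau$ in a lattice $\Lambda$ is contained in \emph{some} unimodular cone of the same lattice. I would argue as follows: since $\tau$ is pointed, there is a linear functional $u \in \Lambda^\vee_\R$ with $u > 0$ on $\tau \setminus \{0\}$; after a small rational perturbation we may take $u \in \Lambda^\vee$. The hyperplane $\{u = 1\}$ slices $\tau$ in a bounded rational polytope $P$. Choose finitely many lattice points of $\Lambda$ lying on $\{u = 1\}$ (for instance at height $1$ in the lattice, i.e. primitive points with $u$-value $1$ — such points exist in abundance once $u$ is primitive) whose convex hull contains $P$; concretely, $P$ sits inside a large simplex $Q$ with vertices at lattice points on $\{u=1\}$. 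Then $\tau \subseteq \pos(Q)$, and $\pos(Q)$ is a simplicial cone generated by $\dim \Lambda$ lattice points each primitive with $u$-value $1$. Finally, invoking toroidal/barycentric resolution of singularities for fans — exactly the mechanism already used at the end of the proof of Proposition \ref{combinatorialcompatiblemodification} — one refines $\pos(Q)$ into a unimodular fan; but a unimodular refinement does not obviously give a single unimodular cone \emph{containing} $\tau$. To get a single containing cone, instead choose the vertices of the enveloping simplex $Q$ to be part of a $\Z$-basis of $\Lambda$: pick a basis $f_1, \dots, f_d$ of $\Lambda$ with $u(f_i) = 1$ for all $i$ (possible after a unimodular change of coordinates making $u$ the first coordinate functional composed appropriately, or directly: extend the primitive $u$ to a basis of $\Lambda^\vee$ and dualize), scaled outward far enough that $\pos(f_1, \dots, f_d)$ contains the bounded region $P$. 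Then $\sigma := \pos(f_1, \dots, f_d)$ is unimodular and contains $\tau$.

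**The obstacle.** The delicate part is ensuring one can find a unimodular (basis) cone \emph{containing} the given cone, not merely subdividing it — these are dual-sounding but genuinely different, and the naive "take a big simplex" argument must be executed with the simplex vertices forming a lattice basis. I expect the cleanest route is: reduce via a unimodular automorphism of $L^\varphi$ to the case where the separating functional $u$ is the sum-of-coordinates functional on $\Z^d = L^\varphi$, so that $\{u = 1\}$ contains the standard simplex's vertices $e_1, \dots, e_d$; then $\pos(e_1, \dots, e_d)$ is the standard (unimodular) orthant-type cone, and after scaling by a large integer $k$ the cone $\pos(k e_1, \dots, k e_d)$ — which equals $\pos(e_1, \dots, e_d)$, still unimodular — contains any pointed cone whose cross-section polytope at height $1$ lies in the $k$-dilate of the standard simplex. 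One must check such a change of coordinates exists, i.e. that every primitive $u \in (L^\varphi)^\vee$ is, up to $\mathrm{GL}(L^\varphi,\Z)$, the all-ones functional with respect to some basis; this holds because $u$ primitive means $u$ extends to a basis of $(L^\varphi)^\vee$, and one then takes the dual basis and replaces it by $f_i := f_i' + (\text{fixed vector})$ adjustments to arrange $u(f_i) = 1$ — a standard but slightly fiddly linear-algebra manipulation, which is the only real content of the argument.
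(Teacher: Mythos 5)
Your reduction to the claim that a full-dimensional pointed rational cone $\tau$ in a lattice $\Lambda$ is contained in a unimodular cone is the right one (and you correctly record full-dimensionality in $L^\varphi$ and deduce pointedness from properness), but the execution of your main step has a genuine gap. Once you fix a functional $u \in \Lambda^\vee$ positive on $\tau \setminus \{0\}$ and insist that the generators of the enveloping cone satisfy $u(f_i) = 1$, the simplex $\mathrm{conv}(f_1, \dots, f_d)$ is a unimodular simplex of the affine lattice $\{u = 1\} \cap \Lambda$ and therefore has \emph{minimal} normalized volume; it cannot be ``scaled outward,'' since $kf_1, \dots, kf_d$ is no longer a lattice basis and $\pos(kf_1, \dots, kf_d) = \pos(f_1, \dots, f_d)$ in any case, as you yourself observe. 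So your construction requires the height-one slice $P$ of $\tau$ to fit inside a minimal-volume simplex, which fails for a bad choice of $u$: take $\tau = \pos((1,0),(1,N)) \subset \R^2$ and $u = (1,0)$; then $P$ is a segment of lattice length $N$ in $\{u = 1\}$, while every segment $\mathrm{conv}(f_1,f_2)$ with $f_1,f_2$ a basis of $\Z^2$ and $u(f_1)=u(f_2)=1$ has lattice length $1$. (This $\tau$ \emph{is} contained in the unimodular cone $\pos((1,0),(0,1))$, just not one whose generators lie at $u$-height $1$.) A choice of $u$ for which your argument would succeed is $u = \sum_i f_i^*$ for the very basis one is trying to construct, so as structured the argument is circular.

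The statement is true, and the clean route is dual to the one you attempted: since $\tau$ is pointed and full-dimensional, $\tau^\vee$ is a full-dimensional pointed rational cone in $\Lambda^\vee_\R$; take a unimodular subdivision of $\tau^\vee$ (the toroidal resolution of singularities for fans that you already invoke) and let $\sigma$ be one of its maximal cones, so that $\sigma$ is unimodular and $\sigma \subseteq \tau^\vee$; dualizing gives $\tau \subseteq \sigma^\vee$ with $\sigma^\vee$ again unimodular, being generated by the basis dual to the generators of $\sigma$. The paper does not spell any of this out --- it simply asserts that a pointed cone in $L^\varphi_\R$ is contained in a unimodular cone of $L^\varphi$ --- so the content you took on is exactly this containment, and it needs the dual argument (or an equivalent), not the height-one enveloping-simplex construction.
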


\begin{proof}
Because $\varphi_\R|_{\R_{\geq 0}^m}$ is proper, $\varphi_\R(\R_{\geq 0}^m)$ is a pointed cone in $L^\varphi_\R$ and therefore is contained in a unimodular cone in $L^\varphi$.
\end{proof}

Now let $v_1, \dots, v_n$ be a basis for $L^\varphi$ such that $\varphi_\R(\R_{\geq 0}^m) \subset \pos(v_1, \dots, v_n)$, and let $v_1, \dots, v_n, v'_1, \dots, v'_{n'}$ be a basis for $L$.

Let $e_1, \dots, e_m$ be the standard basis for $\Z^m$, and let $(a_{ij})_{i \in \{1, \dots, n\}, j \in \{1, \dots, m\}}$ be such that for each $j \in \{1, \dots, m\}$,
\[
	\varphi(e_j) = \sum_{i = 1}^n a_{ij}v_i.
\]

\begin{lemma}
\label{matrixentries}

\begin{enumerate}[(a)]

\item \label{matrixentriesnonnegative} For each $i \in \{1, \dots, n\}, j \in \{1, \dots, m\}$, we have $a_{ij} \in \Z_{\geq 0}$.

\item \label{matrixentriesnonvanishing} For each $j \in \{1, \dots, m\}$, there exists some $i \in \{1, \dots, n\}$ such that $a_{ij} \neq 0$.

\item \label{matrixentrieslinearindependence} The matrix $(a_{ij})_{ij}$ has linearly independent rows.

\end{enumerate}
\end{lemma}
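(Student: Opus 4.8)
The plan is to read off all three parts directly from the structure of the cone $\varphi_\R(\R_{\geq 0}^m)$ that has just been set up, together with the definition $L^\varphi = \varphi_\R(\R^m) \cap L$. No estimates or constructions are needed; the only points requiring a little care are the identification of $L^\varphi_\R$ with $\varphi_\R(\R^m)$ and the translation of properness into a statement about $\varphi(e_j)$.

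I would begin with (\ref{matrixentriesnonnegative}). Since $e_j \in \R_{\geq 0}^m$, we have $\varphi(e_j) = \varphi_\R(e_j) \in \varphi_\R(\R_{\geq 0}^m) \subseteq \pos(v_1, \dots, v_n)$, so in the expression $\varphi(e_j) = \sum_i a_{ij} v_i$ each $a_{ij}$ is a nonnegative real number. On the other hand $\varphi(e_j)$ lies in $\varphi_\R(\R^m) \cap L = L^\varphi$, and $v_1, \dots, v_n$ is a $\Z$-basis of $L^\varphi$, so the $a_{ij}$ are integers. Hence $a_{ij} \in \Z_{\geq 0}$.

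For (\ref{matrixentriesnonvanishing}) I would argue by contradiction using properness: if $\varphi(e_j) = 0$ for some $j$, then the closed ray $\R_{\geq 0} e_j \subseteq \R_{\geq 0}^m$ is contained in $(\varphi_\R|_{\R_{\geq 0}^m})^{-1}(\{0\})$; since $e_j \neq 0$ this preimage of a compact set is noncompact, contradicting the hypothesis that $\varphi_\R|_{\R_{\geq 0}^m}$ is proper. Therefore $\varphi(e_j) \neq 0$, i.e.\ $a_{ij} \neq 0$ for some $i$.

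Finally, for (\ref{matrixentrieslinearindependence}): the subspace $\varphi_\R(\R^m) \subseteq L_\R$ is spanned by the rational vectors $\varphi(e_1), \dots, \varphi(e_m)$, hence is a rational subspace, so $L^\varphi = \varphi_\R(\R^m) \cap L$ is a full-rank lattice in it and $L^\varphi_\R = \varphi_\R(\R^m)$; in particular $n = \dim_\R \varphi_\R(\R^m)$. The matrix $(a_{ij})$ is the matrix of $\varphi$, viewed as a map $\Z^m \to L^\varphi$, in the bases $e_1, \dots, e_m$ and $v_1, \dots, v_n$, so it has exactly $n$ rows, and its rank equals $\dim_\R \varphi_\R(\R^m) = n$. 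An $n$-row matrix of rank $n$ has linearly independent rows, giving (\ref{matrixentrieslinearindependence}). The only real obstacle, such as it is, is keeping the bookkeeping straight among $L$, the sublattice $L^\varphi$, and its realification; once that is pinned down each part is immediate.
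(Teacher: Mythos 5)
Your proof is correct and follows the same route as the paper: (a) from $\varphi_\R(\R_{\geq 0}^m) \subset \pos(v_1,\dots,v_n)$ together with $\varphi(e_j) \in L^\varphi$, (b) from properness forcing $\varphi(e_j) \neq 0$, and (c) from $\varphi_\R$ surjecting onto $L^\varphi_\R$. The paper states these three observations without elaboration; your version simply fills in the same details (the rationality of $\varphi_\R(\R^m)$, the rank count, and the noncompact-fiber contradiction).
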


\begin{proof}
\begin{enumerate}[(a)]

\item This follows from the fact that $\varphi_\R(\R_{\geq 0}^m) \subset \pos(v_1, \dots, v_n)$.

\item This follows from the properness of $\varphi_\R|_{\R_{\geq 0}^m}$, which in particular implies that each $\varphi(e_j) \neq 0$.

\item This follows from the fact that $\varphi_\R$ surjects onto $L^\varphi_\R$.

\end{enumerate}
\end{proof}

Let $\cL'_1, \dots, \cL'_m$ be very ample line bundles on $X$ such that for all $\ell' \in \Z_{>0}$, the line bundles $(\cL'_1)^{\otimes \ell'}, \dots, (\cL'_m)^{\otimes \ell'}$ are distinct from each other.

\begin{remark}
For example, if $\cL$ is a very ample line bundle, for each $j \in \{1, \dots, m\}$, we can set $\cL'_j = \cL^{\otimes j}$, or if the Picard rank of $X$ is at least $m$, we can let $\cL'_1, \dots, \cL'_m$ be very ample line bundles that are independent in $\Pic(X)$.
\end{remark}

\begin{lemma}
\label{power}
There exists $\ell' \in \Z_{>0}$ such that
\[
	\dim \HH^0 \left( \bigotimes_{j =1}^m ((\cL'_j)^{\otimes \ell'})^{\otimes a_{1j}} \right) - 2 \geq n'. 
\]
\end{lemma}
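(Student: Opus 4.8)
The plan is to show that the dimension of the space of global sections of the relevant line bundle grows without bound as $\ell' \to \infty$, so that we can simply choose $\ell'$ large enough. First I would observe that by Lemma \ref{matrixentries}(\ref{matrixentriesnonvanishing}), there exists some index $j_0 \in \{1, \dots, m\}$ with $a_{1 j_0} > 0$ — wait, that is not quite right; I only know some $a_{i j_0} \neq 0$ for each $j$, not that it occurs in the first row. Let me reconsider: the correct route is that I should not need $a_{1j} > 0$ for all $j$. The tensor product $\bigotimes_{j=1}^m ((\cL'_j)^{\otimes \ell'})^{\otimes a_{1j}}$ is a line bundle of the form $\cM^{\otimes \ell'}$ where $\cM = \bigotimes_{j=1}^m (\cL'_j)^{\otimes a_{1j}}$, and the key point is whether $\cM$ is ample (or at least has sections whose count grows). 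If the first row $(a_{11}, \dots, a_{1m})$ is not identically zero, then $\cM$ is a nontrivial nonnegative combination of very ample line bundles, hence ample, and so $\dim \HH^0(\cM^{\otimes \ell'})$ grows like a polynomial of degree $\dim X \geq 2$ in $\ell'$ by asymptotic Riemann–Roch, which certainly eventually exceeds $n' + 2$.

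So the main issue is handling the possibility that the entire first row of $(a_{ij})$ is zero. But here is where I would invoke the freedom in the choice of basis: we only need \emph{some} basis $v_1, \dots, v_n$ of $L^\varphi$ with $\varphi_\R(\R_{\geq 0}^m) \subset \pos(v_1, \dots, v_n)$, and by Lemma \ref{matrixentries}(\ref{matrixentrieslinearindependence}) the matrix $(a_{ij})$ has linearly independent rows, so in particular no row is the zero row — each of the $n$ rows is nonzero. Hence $(a_{11}, \dots, a_{1m}) \neq 0$ automatically, and $\cM = \bigotimes_{j=1}^m (\cL'_j)^{\otimes a_{1j}}$ is a tensor product of very ample line bundles with not all exponents zero and all exponents nonnegative (Lemma \ref{matrixentries}(\ref{matrixentriesnonnegative})), hence ample.

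With $\cM$ ample established, the proof is then routine: by asymptotic Riemann–Roch (or Serre vanishing plus the Hilbert polynomial), $\dim \HH^0(X, \cM^{\otimes \ell'})$ is, for $\ell' \gg 0$, a polynomial in $\ell'$ of degree $\dim X \geq \max(m,2) \geq 2$ with positive leading coefficient, so it tends to $+\infty$; in particular there exists $\ell' \in \Z_{>0}$ with $\dim \HH^0(X, \cM^{\otimes \ell'}) - 2 \geq n'$. Since $\bigotimes_{j=1}^m ((\cL'_j)^{\otimes \ell'})^{\otimes a_{1j}} = \cM^{\otimes \ell'}$, this is exactly the desired inequality. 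I expect the only genuinely substantive point — and the one most worth stating carefully — is the reduction to ampleness of $\cM$, which rests on the first row of $(a_{ij})$ being nonzero; everything after that is a standard growth estimate for sections of an ample line bundle on a variety of dimension at least $2$.

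\begin{proof}
By Lemma \ref{matrixentries}(\ref{matrixentrieslinearindependence}), the matrix $(a_{ij})_{ij}$ has linearly independent rows, so in particular its first row $(a_{11}, \dots, a_{1m})$ is not the zero vector. By Lemma \ref{matrixentries}(\ref{matrixentriesnonnegative}), each $a_{1j} \in \Z_{\geq 0}$. Set
\[
	\cM = \bigotimes_{j=1}^m (\cL'_j)^{\otimes a_{1j}}.
\]
Since each $\cL'_j$ is very ample, each $a_{1j} \geq 0$, and not all $a_{1j}$ vanish, the line bundle $\cM$ is ample. For each $\ell' \in \Z_{>0}$, we have
\[
	\bigotimes_{j=1}^m \left( (\cL'_j)^{\otimes \ell'} \right)^{\otimes a_{1j}} = \cM^{\otimes \ell'}.
\]
Because $\cM$ is ample and $\dim X \geq 2$, asymptotic Riemann--Roch gives that for $\ell' \gg 0$, the quantity $\dim \HH^0(X, \cM^{\otimes \ell'})$ equals a polynomial in $\ell'$ of degree $\dim X \geq 2$ with positive leading coefficient. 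Hence $\dim \HH^0(X, \cM^{\otimes \ell'}) \to \infty$ as $\ell' \to \infty$, so there exists $\ell' \in \Z_{>0}$ with
\[
	\dim \HH^0\left( \bigotimes_{j=1}^m \left( (\cL'_j)^{\otimes \ell'} \right)^{\otimes a_{1j}} \right) - 2 = \dim \HH^0(X, \cM^{\otimes \ell'}) - 2 \geq n'.
\]
\end{proof}
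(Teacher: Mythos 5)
Your proof is correct and follows essentially the same route as the paper: both use Lemma \ref{matrixentries}(\ref{matrixentrieslinearindependence}) to see that the first row of $(a_{ij})$ is nonzero, conclude that $\bigotimes_{j=1}^m (\cL'_j)^{\otimes a_{1j}}$ is (very) ample, identify the bundle in question with its $\ell'$th power, and let the growth of global sections of powers of an ample bundle finish the job. Your initial instinct to reach for part (\ref{matrixentriesnonvanishing}) was indeed the wrong part of the lemma (it controls columns, not rows), but you correctly self-corrected to the linear-independence statement, which is exactly what the paper uses.
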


\begin{proof}
By Lemma \ref{matrixentries}(\ref{matrixentrieslinearindependence}), there exists some $j \in \{1, \dots, m\}$ such that $a_{1j} \neq 0$. Thus by Lemma \ref{matrixentries}(\ref{matrixentriesnonnegative}), the line bundle $\cE'_1 = \bigotimes_{j=1}^m (\cL'_j)^{\otimes a_{1j}}$ is very ample, and thus there exists $\ell' \in \Z_{>0}$ such that 
\[
	\dim \HH^0 \left( \bigotimes_{j =1}^m ((\cL'_j)^{\otimes \ell'})^{\otimes a_{1j}} \right) - 2 = \dim\HH^0((\cE'_1)^{\otimes \ell'}) - 2 \geq n'.
\]
\end{proof}

Now let $\ell'$ be as in Lemma \ref{power}, and for each $j \in \{1, \dots, m\}$, set
\[
	\cL_j = (\cL'_j)^{\otimes \ell'}.
\]
By Bertini's theorem, there exist $f_1, \dots, f_m$ such that for each $j \in \{1, \dots, m\}$, we have that $f_j \in \HH^0(\cL_j)$ and $\divr f_j = D_j$ is smooth and irreducible, and $D_1 + \dots + D_m$ is a simple normal crossing divisor. For each $i \in \{1, \dots, n\}$, set
\[
	\cE_i = \bigotimes_{j=1}^m \cL_j^{\otimes a_{ij}}, \qquad s_i = \bigotimes_{j=1}^m f_j^{\otimes a_{ij}} \in \HH^0(\cE_i), \qquad r_i = \dim \HH^0(\cE_i) - 1.
\]
Note that by our choice of $\ell'$, we have that
\[
	r_1 -1 \geq n'.
\]

\begin{lemma}
For each $i \in \{1, \dots, n\}$, the line bundle $\cE_i$ is very ample.
\end{lemma}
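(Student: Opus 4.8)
The plan is to exhibit $\cE_i$ as the tensor product of a very ample line bundle with a globally generated one, and then to invoke the standard fact that, on a projective variety, such a tensor product is again very ample. (Recall why this holds: a very ample $\cL$ gives a closed immersion $X \hookrightarrow \bP^r$, a globally generated $\cM$ gives a morphism $X \to \bP^s$, the induced map $X \to \bP^r \times \bP^s$ is a closed immersion because its composition with the first projection is, and composing with the Segre embedding $\bP^r \times \bP^s \hookrightarrow \bP^{(r+1)(s+1)-1}$ realizes $\cL \otimes \cM$ as very ample.)

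First I would record the elementary inputs. Each $\cL_j = (\cL'_j)^{\otimes \ell'}$ is very ample, being a positive tensor power of the very ample line bundle $\cL'_j$; consequently, for every $a \in \Z_{\geq 0}$ the line bundle $\cL_j^{\otimes a}$ is globally generated, since it equals $\cO_X$ when $a = 0$ and is very ample (hence globally generated) when $a \geq 1$. Also, a tensor product of globally generated line bundles is globally generated.

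Next I would use Lemma \ref{matrixentries}: by part (\ref{matrixentrieslinearindependence}) the rows of $(a_{ij})_{ij}$ are linearly independent, so the $i$th row is not the zero vector, and by part (\ref{matrixentriesnonnegative}) its entries lie in $\Z_{\geq 0}$; hence there is an index $j_0$ with $a_{ij_0} \geq 1$. Writing
\[
	\cE_i = \cL_{j_0}^{\otimes a_{ij_0}} \otimes \bigotimes_{j \neq j_0} \cL_j^{\otimes a_{ij}},
\]
the first factor is very ample and the product over $j \neq j_0$ is globally generated by the previous paragraph, so $\cE_i$ is very ample.

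There is essentially no genuine obstacle here; the only point that requires a moment's care is picking the distinguished index $j_0$ with $a_{ij_0} \geq 1$, which is precisely where the linear independence of the rows is invoked — note that the column-wise nonvanishing of part (\ref{matrixentriesnonvanishing}) would not suffice for this.
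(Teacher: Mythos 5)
Your proof is correct and follows essentially the same route as the paper: both use Lemma \ref{matrixentries}(\ref{matrixentriesnonnegative}) and (\ref{matrixentrieslinearindependence}) to find a row index with all entries nonnegative and at least one positive, and then conclude very ampleness of the tensor product. You simply spell out the standard "very ample tensor globally generated is very ample" fact that the paper leaves implicit.
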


\begin{proof}
For each $i$, by Lemma \ref{matrixentries}(\ref{matrixentriesnonnegative}) and Lemma \ref{matrixentries}(\ref{matrixentrieslinearindependence}), each $a_{ij} \geq 0$ and for some $j \in \{1, \dots, m\}$, we have that $a_{ij} \neq 0$. Thus $\cE_i$ is very ample.
\end{proof}

Now by Bertini's theorem, there exists $(s_i^{(\ell)})_{i \in \{1, \dots, n\}, \ell \in \{1, \dots, r_i\}}$ such that
\begin{itemize}

\item each $s_i^{(\ell)} \in \HH^0(\cE_i)$ and each $\divr s_i^{(\ell)} = E_i^{(\ell)}$ is smooth and irreducible,

\item $D_1 + \dots + D_m + \sum_{i =1}^n \sum_{\ell =1}^{r_i} E_i^{(\ell)}$ is a simple normal crossing divisor,

\item and $s_i, s_i^{(1)}, \dots, s_i^{(r_i)}$ is a basis for $\HH^0(\cE_i)$ for each $i \in \{1, \dots, n\}$.

\end{itemize}

For each $i \in \{1, \dots, n\}$, let $x_i, x_i^{(1)}, \dots, x_i^{(r_i)}$ be homogeneous coordinates for $\bP^{r_i}$ with corresponding hyperplanes $H_i, H_i^{(1)}, \dots, H_i^{(r_i)}$ and let $X \hookrightarrow \bP^{r_i}$ be the closed immersion such that $x_i, x_i^{(1)}, \dots, x_i^{(r_i)}$ pull back to $s_i, s_i^{(1)}, \dots, s_i^{(r_i)}$, respectively. Set
\[
	\bP = \prod_{i=1}^n \bP^{r_i},
\]
and let each $\pi_i: \bP \to \bP^{r_i}$ be the $i$th projection. Let $X \hookrightarrow \bP$ be the closed immersion induced by $\{X \hookrightarrow \bP^{r_i}\}_i$. Set
\[
	T = \bP \setminus \left( \bigcup_{i =1}^n \pi_i^{-1}(H_i) \cup \pi_i^{-1}(H_i^{(1)}) \cup \dots \cup \pi_i^{-1}(H_i^{(r_i)}) \right)
\]
and
\[
	U = X \cap T \subset X.
\]
Let $M$ be the character lattice of the algebraic torus $T$, and let $N = M^\vee$. For each $i \in \{1, \dots, n\}$, let $w_i, w_i^{(1)}, \dots, w_i^{(r_i)} \in N$ be the first lattice points of the rays corresponding to $\pi_i^{-1}(H_i), \pi_i^{-1}(H_i^{(1)}), \dots, \pi_i^{-1}(H_i^{(r_i)})$, respectively.

Let $\psi: L \to N$ be defined by $\psi(v_i) = w_i$ for each $i \in \{1, \dots, n\}$ and $\psi(v'_{i'}) = w_1^{(i')}$ for each $i' \in \{1, \dots, n'\}$.

\begin{lemma}
\label{boundaryintersection}

\begin{enumerate}[(a)]

\item \label{boundarydiv} For each $i \in \{1, \dots, n\}$, $\ell \in \{1, \dots, r_i\}$, we have the scheme theoretic intersections
\[
	X \cap \pi_i^{-1}(H_i) = \divr s_i = \sum_{j =1}^m a_{ij} D_j,
\]
and
\[
	X \cap \pi_i^{-1}(H_i^{(\ell)}) = \divr s_i^{(\ell)} = E_i^{(\ell)}.
\]

\item \label{boundaryval} For each $j \in \{1, \dots, m\}$,
\[
	\val_{D_j}|_M = \sum_{i =1}^n a_{ij} w_i,
\]
and for each $i \in \{1, \dots, n\}, \ell \in \{1, \dots, r_i\}$,
\[
	\val_{E_i^{(\ell)}}|_M = w_i^{(\ell)}.
\]

\end{enumerate}
\end{lemma}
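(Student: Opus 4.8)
The plan is to read everything off the explicit construction: the closed immersions $X \hookrightarrow \bP^{r_i}$ were arranged so that the homogeneous coordinates pull back to the chosen sections, and $\bP = \prod_i \bP^{r_i}$ carries an evident torus with respect to which the $w_i$ and $w_i^{(\ell)}$ are ray generators.

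For part (\ref{boundarydiv}) I would just unwind the definitions. Since $X \hookrightarrow \bP^{r_i}$ sends $x_i$ to $s_i$ and $x_i^{(\ell)}$ to $s_i^{(\ell)}$, and $X \hookrightarrow \bP$ is compatible via $\pi_i$, the scheme-theoretic intersection $X \cap \pi_i^{-1}(H_i)$ is the zero scheme of $s_i$, i.e. $\divr s_i$; under the identification $\cE_i = \bigotimes_j \cL_j^{\otimes a_{ij}}$ we have $s_i = \bigotimes_j f_j^{\otimes a_{ij}}$, so $\divr s_i = \sum_j a_{ij}\divr f_j = \sum_j a_{ij}D_j$. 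Identically, $X \cap \pi_i^{-1}(H_i^{(\ell)}) = \divr s_i^{(\ell)}$, which is by definition the prime divisor $E_i^{(\ell)}$.

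For part (\ref{boundaryval}) I would first install the toric dictionary on each factor. Taking the torus of $\bP^{r_i}$ to be the locus where all of $x_i, x_i^{(1)}, \dots, x_i^{(r_i)}$ are nonzero, the monomials $t_i^{(\ell)} := x_i^{(\ell)}/x_i$ form a basis of its character lattice, the $w_i^{(\ell)}$ are the dual basis of the cocharacter lattice, $w_i = -\sum_\ell w_i^{(\ell)}$, and $\divr_{\bP}(t_i^{(\ell)}) = \pi_i^{-1}(H_i^{(\ell)}) - \pi_i^{-1}(H_i)$. Concatenating over $i$, an arbitrary character of $T$ is $\chi^u = \prod_{i,\ell}(t_i^{(\ell)})^{u_i^{(\ell)}}$ for $u = (u_i^{(\ell)})$ in $M = \bigoplus_i M_i$, so its principal (hence Cartier) divisor on $\bP$ restricts to $X$, and by part (\ref{boundarydiv}),
\[
	\divr_X(\chi^u|_U) = \sum_{i,\ell} u_i^{(\ell)}\Bigl(E_i^{(\ell)} - \sum_j a_{ij}D_j\Bigr).
\]
Reading off the coefficient of $E_i^{(\ell)}$ yields $\ord_{E_i^{(\ell)}}(\chi^u|_U) = u_i^{(\ell)} = \langle u, w_i^{(\ell)}\rangle$, so $\val_{E_i^{(\ell)}}|_M = w_i^{(\ell)}$; reading off the coefficient of $D_j$ yields $\ord_{D_j}(\chi^u|_U) = -\sum_i a_{ij}\sum_\ell u_i^{(\ell)} = \langle u, \sum_i a_{ij}w_i\rangle$, so $\val_{D_j}|_M = \sum_i a_{ij}w_i$.

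I do not expect a genuine obstacle; the lemma is bookkeeping. The two points to handle with a little care are that the $D_j$ and the $E_i^{(\ell)}$ are pairwise distinct prime divisors (guaranteed by the Bertini choices making $\sum_j D_j + \sum_{i,\ell}E_i^{(\ell)}$ simple normal crossing), so the coefficients in the displayed divisor are well defined, and that restricting $\divr_{\bP}(\chi^u)$ to $X$ commutes with computing $\divr_X$, which is legitimate because that divisor is principal and its components meet $X$ in the divisors computed in part (\ref{boundarydiv}). The one real hazard is sign discipline: $H_i$ plays the role of the ``hyperplane at infinity'' for its factor, so $w_i = -\sum_\ell w_i^{(\ell)}$ rather than a standard basis vector, and this is what produces the minus sign, hence the coefficient $\sum_i a_{ij}w_i$, in the formula for $\val_{D_j}|_M$.
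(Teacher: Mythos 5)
Your proof is correct and follows the same route as the paper, which simply asserts that the lemma ``follows from the construction of the map $X \hookrightarrow \bP$''; you have filled in the details (the pullback of coordinate hyperplanes, the toric dictionary on $\bP = \prod_i \bP^{r_i}$ with $w_i = -\sum_\ell w_i^{(\ell)}$, and reading off $\ord_{D_j}$ and $\ord_{E_i^{(\ell)}}$ from $\divr(\chi^u|_X)$) accurately, including the sign bookkeeping for the ``hyperplane at infinity.''
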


\begin{proof}
This follows from the construction of the map $X \hookrightarrow \bP$.
\end{proof}

We now prove parts (i)-(iv) of Theorem \ref{gtropproper}(\ref{properisgtrop}).

\begin{proof}
\begin{enumerate}[(i)]

\item By Lemmas \ref{matrixentries}(\ref{matrixentriesnonvanishing}) and \ref{boundaryintersection}(\ref{boundarydiv}),
\[
	X \setminus U = D_1 + \dots + D_m + \sum_{i=1}^n \sum_{\ell=1}^{r_i} E_i^{(\ell)}.
\]

\item Because $\cL_1, \dots, \cL_m$ are very ample and $X$ has dimension at least $m$,
\[
	D_1 \cap \dots \cap D_m \neq \emptyset.
\]

\item Because $r_1 -1 \geq n'$, the vectors $w_1, \dots, w_n, w_1^{(1)}, \dots, w_1^{(n')}$ are the first lattice points of the rays generating a cone in the fan defining $\bP$ as a $T$-toric variety. Thus because $\bP$ is smooth, we have that $w_1, \dots, w_n, w_1^{(1)}, \dots, w_1^{(n')}$ can be completed to a basis for $N$, so the result follows.

\item By Lemma \ref{boundaryintersection}(\ref{boundaryval}), for each $j \in \{1, \dots, m\}$,
\begin{align*}
	(\psi \circ \varphi)(e_j) &= \psi\left( \sum_{i=1}^n a_{ij}v_i \right)\\
	&= \sum_{i=1}^n a_{ij}w_i\\
	&= \val_{D_j}|_M.
\end{align*}

\end{enumerate}
\end{proof}

The remainder of Theorem \ref{gtropproper}(\ref{properisgtrop}) follows from the next two propositions.

\begin{proposition}
\label{properisgtropinjective}
The map $M \to \cO_U(U)^\times/k^\times$ induced by $U \hookrightarrow T$ is injective.
\end{proposition}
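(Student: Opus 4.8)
The plan is to reduce the injectivity of $M \to \cO_U(U)^\times/k^\times$ to the elementary fact that a certain collection of primitive ray generators of $\bP$ is a $\Z$-basis of $N$, which we can read off directly because $\bP$ is smooth.

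First I would identify the kernel concretely. An element $u \in M$ is sent to the class of the restricted character $\chi^u|_U \in \cO_U(U)^\times$, so $u$ lies in the kernel precisely when $\chi^u|_U$ is a nonzero constant. Since $\chi^u$ is a unit on $T$ and $U = X \cap T$ is a dense open subvariety of the smooth projective (hence complete, integral, and normal) variety $X$, the function $\chi^u|_U$ extends to a nonzero rational function $\chi^u|_X \in k(X)$ whose divisor is supported on $X \setminus U$, and $\chi^u|_X$ is constant if and only if $\divr_X(\chi^u|_X) = 0$. As shown above, $\cC(X \setminus U) = \{D_1, \dots, D_m\} \cup \{E_i^{(\ell)} : 1 \le i \le n,\ 1 \le \ell \le r_i\}$, and for each such boundary component $D$ we have $\ord_D(\chi^u|_X) = \val_D|_M(u)$ by the definition of $\val_D|_M$. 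Hence $u$ lies in the kernel if and only if $\val_D|_M(u) = 0$ for every $D \in \cC(X \setminus U)$; in particular, by Lemma \ref{boundaryintersection}(\ref{boundaryval}), such a $u$ satisfies $\val_{E_i^{(\ell)}}|_M(u) = \langle u, w_i^{(\ell)} \rangle = 0$ for all $i$ and $\ell$.

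Second, I would check that $\{w_i^{(\ell)} : 1 \le i \le n,\ 1 \le \ell \le r_i\}$ is a $\Z$-basis of $N$. Here $N = \bigoplus_{i=1}^n N_i$, where $N_i$ is the cocharacter lattice of the factor $\bP^{r_i}$ of $\bP = \prod_{i=1}^n \bP^{r_i}$, and the torus-invariant prime divisors of $\bP$ coming from the $i$-th factor, namely $\pi_i^{-1}(H_i)$ and the $\pi_i^{-1}(H_i^{(\ell)})$, have primitive ray generators $w_i, w_i^{(1)}, \dots, w_i^{(r_i)} \in N_i$. These are exactly the $r_i+1$ primitive ray generators of the fan of $\bP^{r_i}$, so, $\bP^{r_i}$ being smooth, dropping any one of them leaves a $\Z$-basis of $N_i$; in particular $w_i^{(1)}, \dots, w_i^{(r_i)}$ is a basis of $N_i$, and taking the union over $i$ yields a basis of $N$.

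Putting these two steps together, a kernel element $u$ pairs to zero against every member of a basis of $N$, hence $u = 0$, which proves injectivity. I do not anticipate a genuine obstacle here; the only points requiring care are identifying the $w_i^{(\ell)}$ as the standard ray generators of the projective-space factors (so that smoothness of $\bP$ applies) and observing that the boundary divisors $E_i^{(\ell)}$ alone already suffice — they were introduced precisely to complete each $\cE_i$-embedding into a full projective space — so the divisors $D_j$ play no role in this particular argument.
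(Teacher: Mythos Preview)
Your proof is correct, but it takes a somewhat different route from the paper's. The paper writes $\divr(\chi^u)$ as a combination of the torus-invariant divisors $\pi_i^{-1}(H_i)$ and $\pi_i^{-1}(H_i^{(\ell)})$ on $\bP$ with coefficients $h_i$ and $h_i^{(\ell)}$, pulls this back to $X$ via Lemma~\ref{boundaryintersection}(\ref{boundarydiv}), and reads off from $\divr(\chi^u|_X)=0$ that all $h_i^{(\ell)}=0$ and that $\sum_i h_i a_{ij}=0$ for every $j$; the row-independence of $(a_{ij})$ from Lemma~\ref{matrixentries}(\ref{matrixentrieslinearindependence}) then forces $h_i=0$, whence $\divr(\chi^u)=0$ on $\bP$ and $u=0$. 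You instead use only the divisors $E_i^{(\ell)}$ together with Lemma~\ref{boundaryintersection}(\ref{boundaryval}) to obtain $\langle u, w_i^{(\ell)}\rangle = 0$, and then observe that $\{w_i^{(\ell)}\}_{i,\ell}$ already forms a $\Z$-basis of $N$ by the smoothness of each projective-space factor. Your argument is a bit more direct for this proposition, since it bypasses Lemma~\ref{matrixentries}(\ref{matrixentrieslinearindependence}) and the $D_j$'s entirely; the paper's approach, on the other hand, parallels more closely the divisor bookkeeping that is reused in the companion surjectivity statement (Proposition~\ref{properisgtropsurjective}).
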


\begin{proof}
Let $u \in M$ such that $\chi^u|_U \in k^\times$. Considering $\chi^u$ as a rational function on $\bP$, write
\[
	\divr (\chi^u) = \sum_{i =1}^n h_i \pi_i^{-1}(H_i) + \sum_{i = 1}^n \sum_{\ell =1}^{r_i} h_i^{(\ell)} \pi_i^{-1}(H_i^{(\ell)}).
\]
Then by Lemma \ref{boundaryintersection}(\ref{boundarydiv}),
\[
	0 = \divr (\chi^u|_X) = \sum_{j=1}^m \left( \sum_{i=1}^n h_i a_{ij} \right) D_j + \sum_{i=1}^n \sum_{\ell=1}^{r_i} h_i^{(\ell)} E_i^{(\ell)}.
\]
Thus for all $j \in \{1, \dots, m\}$,
\[
	\sum_{i=1}^n h_i a_{ij} = 0,
\]
and for all $i \in \{1, \dots, n\}, \ell \in \{1, \dots, r_i\}$,
\[
	h_i^{(\ell)} = 0.
\]
Then by Lemma \ref{matrixentries}(\ref{matrixentrieslinearindependence}), for all $i \in \{1, \dots, n\}$,
\[
	h_i = 0.
\]
Therefore $\divr(\chi^u) = 0$, so $\chi^u \in \cO_\bP(\bP)^\times = k^\times$ and thus $u = 0$.
\end{proof}

\begin{proposition}
\label{properisgtropsurjective}
If $\cL'_1, \dots, \cL'_m$ are independent in $\Pic(X)$, then the map $M \to \cO_U(U)^\times/k^\times$ induced by $U \hookrightarrow T$ is surjective.
\end{proposition}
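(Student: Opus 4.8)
The plan is to reinterpret the statement divisorially. Since $X$ is smooth, projective, and integral over the algebraically closed field $k$, we have $\cO_X(X)^\times = k^\times$, so sending a unit to its principal divisor on $X$ realizes $\cO_U(U)^\times/k^\times$ as the group of principal divisors on $X$ supported on $X \setminus U = D_1 \cup \dots \cup D_m \cup \bigcup_{i,\ell} E_i^{(\ell)}$. Under this identification, Lemma \ref{boundaryintersection}(\ref{boundarydiv}) computes the image of $M$: the basis character $x_i^{(\ell)}/x_i$ of $M$ pulls back to the rational function $s_i^{(\ell)}/s_i$ on $X$, whose divisor is
\[
	E_i^{(\ell)} - \sum_{j=1}^m a_{ij} D_j,
\]
so the image of a general $u = (c_i^{(\ell)})_{i,\ell} \in M$ is $\sum_{i,\ell} c_i^{(\ell)}\bigl(E_i^{(\ell)} - \sum_j a_{ij}D_j\bigr)$. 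Thus it suffices to show that every principal divisor on $X$ supported on $X \setminus U$ lies in the subgroup of $\mathrm{Div}(X)$ spanned by the divisors $E_i^{(\ell)} - \sum_j a_{ij}D_j$.

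Given such a principal divisor, written as $\sum_j b_j D_j + \sum_{i,\ell} c_i^{(\ell)} E_i^{(\ell)}$, I would subtract off $\sum_{i,\ell} c_i^{(\ell)}\bigl(E_i^{(\ell)} - \sum_j a_{ij}D_j\bigr)$, which is again a principal divisor, being the divisor of the corresponding monomial character restricted to $X$. This reduces the problem to showing that a principal divisor of the shape $\sum_{j=1}^m d_j D_j$, with $d_j \in \Z$ and $D_j \in |\cL_j|$, is necessarily zero.

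For this last step, such a divisor being principal gives $\sum_j d_j [\cL_j] = 0$ in $\Pic(X)$. The only place the hypothesis is used is here: since $\cL_j = (\cL'_j)^{\otimes \ell'}$ and $\Z$ is torsion free, linear independence of $[\cL'_1],\dots,[\cL'_m]$ in $\Pic(X)$ forces linear independence of $[\cL_1],\dots,[\cL_m]$, whence all $d_j = 0$ and the divisor vanishes. I do not anticipate any genuine difficulty in this argument; its content is simply that, after passing to the rational functions that are units on $U$, the only principal divisors supported on the boundary are those already visible from monomial characters. This is exactly what fails for a general choice of the $\cL'_j$ and is forced here by their independence, and it is not addressed by the injectivity established in Proposition \ref{properisgtropinjective}.
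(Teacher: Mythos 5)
Your argument is correct and is essentially the paper's proof in a slightly repackaged form: both identify $\cO_U(U)^\times/k^\times$ with principal divisors supported on $X\setminus U$, use $\cO_X(D_j)\cong\cL_j$ and $\cO_X(E_i^{(\ell)})\cong\bigotimes_j\cL_j^{\otimes a_{ij}}$ together with independence of the $\cL'_j$ in $\Pic(X)$ to pin down the $D_j$-coefficients, and then realize the divisor as $\divr(\chi^u|_X)$ for a character $u\in M$. The paper constructs that character explicitly from a degree-zero divisor on $\bP$, whereas you quotient by the image of $M$ first; the content is the same.
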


\begin{proof}
Let $g \in \cO_U(U)^\times$. Considering $g$ as a rational function on $X$, write
\[
	\divr g = d_1D_1 + \dots + d_m D_m + \sum_{i=1}^n \sum_{\ell=1}^{r_i} e_i^{(\ell)} E_i^{(\ell)}.
\]
Then
\begin{align*}
	\cO_X &\cong \cO_X(d_1D_1 + \dots + d_m D_m + \sum_{i=1}^n \sum_{\ell=1}^{r_i} e_i^{(\ell)} E_i^{(\ell)})\\
	&\cong \bigotimes_{j=1}^m \cL_j^{\otimes d_j + \sum_{i=1}^n \left( \sum_{\ell=1}^{r_i} e_i^{(\ell)} \right) a_{ij}}.
\end{align*}
Then because $\cL_1, \dots, \cL_n$ are independent in $\Pic(X)$, for each $j \in \{1, \dots, m\}$,
\[
	d_j + \sum_{i=1}^n \left( \sum_{\ell =1}^{r_i} e_i^{(\ell)} \right) a_{ij} = 0.
\]
Now consider the divisor on $\bP$
\[
	H = \sum_{i=1}^n \left( -\sum_{\ell =1}^{r_i} e_i^{(\ell)} \right) \pi_i^{-1}(H_i) + \sum_{i=1}^n \sum_{\ell=1}^{r_i} e_i^{(\ell)} \pi_i^{-1}(H_i^{(\ell)}).
\]
Then
\[
	\cO_{\bP}(H) \cong \bigotimes_{i=1}^n (\pi_i^* \cO_{\bP^{r_i}}(1))^{\otimes -\sum_{\ell=1}^{r_i} e_i^{(\ell)} + \sum_{\ell=1}^{r_i} e_i^{(\ell)}} \cong \cO_\bP.
\]
Thus there exists $u \in M$ such that $\divr (\chi^u) = H$. By Lemma \ref{boundaryintersection}(\ref{boundarydiv})
\begin{align*}
	\divr(\chi^u|_X) &= \sum_{j=1}^m \left( -  \sum_{i=1}^n \left( \sum_{\ell =1}^{r_i} e_i^{(\ell)} \right) a_{ij} \right) D_j + \sum_{i=1}^n \sum_{\ell=1}^{r_i} e_i^{(\ell)} E_i^{(\ell)}\\
	&= d_1D_1 + \dots + d_m D_m + \sum_{i=1}^n \sum_{\ell=1}^{r_i} e_i^{(\ell)} E_i^{(\ell)}\\
	&= \divr g.
\end{align*}
Thus $\chi^u|_U$ and $g$ have the same image in $\cO_U(U)^\times / k^\times$, so the map $M \to \cO_U(U)^\times/k^\times$ is surjective.
\end{proof}

\section{Immersive Geometric Tropicalization}
\label{immersivegeometrictropicalizationsection}

\subsection{Surfaces without Immersive Geometric Tropicalization}

In this sub-section, we will prove Theorem \ref{immersivegtrop}(\ref{surfacewithoutimmersivegtrop}).

\begin{lemma}
\label{pointblowupsurface}
Let $X$ be a smooth projective surface, let $U \subset X$ be a very affine open subvariety such that $X \setminus U$ is a simple normal crossing divisor, let $U \hookrightarrow T$ be a closed immersion into an algebraic torus, and let $\pi: X' \to X$ be the blow-up of $X$ at a point in $X \setminus U$.

If $U \subset X$ does not have immersive geometric tropicalization with respect to $U \hookrightarrow T$, then $U \subset X'$ does not have immersive geometric tropicalization with respect to $U \hookrightarrow T$.
\end{lemma}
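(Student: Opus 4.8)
The plan is to argue directly with the lattice maps $\varphi^\cS$, exploiting that a surface forces $|\cS| \le 2$. By Definition \ref{has_gtrop}(a) (equivalently, the setup of Theorem \ref{gtropproper}(\ref{gtropisproper})), saying that $U \subset X$ does not have immersive geometric tropicalization with respect to $U \hookrightarrow T$ means there is some $\cS \subset \cC(X \setminus U)$ with $\bigcap_{D \in \cS} D \neq \emptyset$ for which $\varphi^\cS_\R \colon \R^\cS \to N_\R$ is not injective, where $\varphi^\cS$ sends the basis vector of $D$ to $\val_D|_M$. Since $X$ is a surface and $X \setminus U$ is simple normal crossing, at most two boundary components pass through any point, so $|\cS| \le 2$, and non-injectivity of $\varphi^\cS_\R$ means either $\cS = \{D_0\}$ with $\val_{D_0}|_M = 0$, or $\cS = \{D_1, D_2\}$ with $\val_{D_1}|_M, \val_{D_2}|_M$ linearly dependent over $\R$. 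Fixing such a witnessing $\cS$, I will produce a witnessing $\cS' \subset \cC(X' \setminus U)$ for $X'$. (Note $X' \setminus U$ is again simple normal crossing, since $\pi$ is an isomorphism over $X \setminus \{p\}$ and blowing up a surface at a point of a simple normal crossing divisor produces one, so the maps $\varphi^{\cS'}$ are defined and the conclusion makes sense.)

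First I would record how the relevant valuations behave under $\pi \colon X' \to X$. Since $\pi$ is birational and an isomorphism near the generic point of the strict transform $\widetilde{D}$ of any prime divisor $D$, we have $\val_{\widetilde{D}}|_M = \val_D|_M$. Moreover, if $p$ is a node of $X \setminus U$, say $p = D_1 \cap D_2$ with $D_1, D_2$ meeting transversally, then choosing local coordinates $x, y$ at $p$ cutting out $D_1, D_2$ and using that the divisor of $\chi^u|_X$ is supported on $X \setminus U$ for every $u \in M$, a direct computation in a chart of the blow-up gives $\val_E|_M = \val_{D_1}|_M + \val_{D_2}|_M$ for the exceptional divisor $E$. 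I would also use the elementary fact that, for vectors $v, w$ in an $\R$-vector space, $\{v, v+w\}$ is linearly dependent if and only if $\{v, w\}$ is (the map $(a,b) \mapsto (a+b, b)$ is an automorphism of $\R^2$).

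Then I would split into cases on $\cS$. If $\cS = \{D_0\}$ with $\val_{D_0}|_M = 0$, take $\cS' = \{\widetilde{D_0}\}$: the strict transform is a nonempty irreducible component of $X' \setminus U$ with $\val_{\widetilde{D_0}}|_M = 0$, so $\varphi^{\cS'}_\R$ is not injective. If $\cS = \{D_1, D_2\}$ and $p \notin D_1 \cap D_2$, take $\cS' = \{\widetilde{D_1}, \widetilde{D_2}\}$: picking $q \in D_1 \cap D_2$ (so $q \neq p$), the point $\pi^{-1}(q)$ lies on both strict transforms, so $\bigcap_{D \in \cS'} D \neq \emptyset$, and $(\val_{\widetilde{D_i}}|_M) = (\val_{D_i}|_M)$ is still a dependent pair. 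Finally, if $\cS = \{D_1, D_2\}$ and $p \in D_1 \cap D_2$, take $\cS' = \{\widetilde{D_1}, E\}$: the strict transform $\widetilde{D_1}$ meets $E$ in one point, so $\bigcap_{D \in \cS'} D \neq \emptyset$, and $(\val_{\widetilde{D_1}}|_M, \val_E|_M) = (\val_{D_1}|_M, \val_{D_1}|_M + \val_{D_2}|_M)$ is linearly dependent because $(\val_{D_1}|_M, \val_{D_2}|_M)$ is. In every case $\varphi^{\cS'}_\R$ fails to be injective, so $U \subset X'$ does not have immersive geometric tropicalization with respect to $U \hookrightarrow T$.

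The main point to be careful about — more bookkeeping than a real difficulty — is the case $p \in D_1 \cap D_2$, where blowing up \emph{separates} the two branches, so $\{\widetilde{D_1}, \widetilde{D_2}\}$ no longer has nonempty intersection and one must instead invoke the exceptional divisor together with the identity $\val_E|_M = \val_{D_1}|_M + \val_{D_2}|_M$. Verifying that identity (and the identity $\val_{\widetilde{D}}|_M = \val_D|_M$) cleanly is the one computational step, and its key input is that characters of $T$, restricted to $U$, extend to rational functions on $X$ whose divisors are supported on $X \setminus U$.
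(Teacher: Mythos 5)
Your proposal is correct and follows essentially the same route as the paper: reduce to a witnessing pair (or singleton) of boundary components, then pass to strict transforms when $p$ avoids their intersection and to $\{\widetilde{D_1},E\}$ with $\val_E|_M=\val_{D_1}|_M+\val_{D_2}|_M$ when $p$ is the intersection point. The only cosmetic difference is that the paper excludes the singleton case $\val_{D_0}|_M=0$ up front via Theorem \ref{gtropproper}(a), whereas you handle it directly, and you make explicit the exceptional-divisor valuation formula that the paper leaves implicit.
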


\begin{proof}
Let $M$ be the character lattice of $T$, let $D_1, \dots, D_m$ be the irreducible components of $X \setminus U$, let $p \in X \setminus U$ be the center of $\pi: X' \to X$, let $D'_1, \dots, D'_m$ be the strict transforms of $D_1, \dots, D_m$, respectively, and let $E$ be the exceptional divisor of $\pi: X' \to X$.

By Theorem \ref{gtropproper}(\ref{gtropisproper}), $\val_{D_j}|_M \neq 0$ for all $j \in \{1, \dots, m\}$, so because $U \subset X$ does not have immersive geometric tropicalization with respect to $U \hookrightarrow T$, there exist $i \neq j \in \{1, \dots, m\}$ such that $D_i \cap D_j \neq \emptyset$ and $\val_{D_i}|_M$ and $\val_{D_j}|_M$ are linearly dependent.

If $p  \notin D_i \cap D_j$, then $D'_i \cap D'_j \neq \emptyset$ and $\val_{D'_i}|_M$ and $\val_{D'_j}|_M$ are linearly dependent. If $p \in D_i \cap D_j$, then $D'_i \cap E \neq \emptyset$ and $\val_{D'_i}|_M$ and $\val_E|_M$ are linearly dependent. In either case, $U \subset X'$ does not have immersive geometric tropicalization with respect to $U \hookrightarrow T$.
\end{proof}

\begin{lemma}
\label{nonruledminimalsurface}
Let $X$ be a smooth projective surface, let $U \subset X$ be a very affine open subvariety such that $X \setminus U$ is a simple normal crossing divisor, let $U \hookrightarrow T$ be a closed immersion into an algebraic torus, and let $U \subset \widetilde{X}$ be an open immersion into a smooth projective surface such that $\widetilde{X} \setminus U$ is a simple normal crossing divisor.

If $X$ is a non-ruled minimal surface and $U \subset X$ does not have immersive geometric tropicalization with respect to $U \hookrightarrow T$, then $U \subset \widetilde{X}$ does not have immersive geometric tropicalization with respect to $U \hookrightarrow T$.
\end{lemma}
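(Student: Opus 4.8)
The plan is to use the classification of surfaces to replace the birational map between $X$ and $\widetilde{X}$ by an honest morphism, and then to reduce to Lemma~\ref{pointblowupsurface}.

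Since $X$ is a non-ruled minimal surface, it is the unique minimal model of its birational equivalence class, and $X$ and $\widetilde{X}$ are birational because each contains $U$ as a dense open subvariety. Hence the minimal model of $\widetilde{X}$ is $X$, and writing the corresponding sequence of $(-1)$-curve contractions backwards exhibits a birational morphism $\pi: \widetilde{X} \to X$ that is a composition of point blow-ups $\widetilde{X} = X_N \to X_{N-1} \to \cdots \to X_0 = X$, where each $X_{k+1} \to X_k$ is the blow-up of a point $p_k \in X_k$.

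Next I would check that $\pi$ is an isomorphism over $U$ with every center $p_k$ in the boundary. The exceptional locus of each composite morphism $X_k \to X$ is a union of complete curves, and since blow-ups never contract curves, these have complete strict transforms in $\widetilde{X}$. As $U$ is very affine it contains no complete curve, so no such curve can map into $U$; therefore $\pi$, and each $X_k \to X$, is an isomorphism over $U$. Consequently $U$ is an open subvariety of every $X_k$, and $X_k \setminus U$ is the total transform of $X \setminus U$, hence a simple normal crossing divisor since it is obtained from the simple normal crossing divisor $X \setminus U$ by successively blowing up points. Moreover each $p_k$ lies in $X_k \setminus U$: otherwise the exceptional curve of $X_{k+1} \to X_k$ would map to a point of $U$, and so would its complete strict transform in $\widetilde{X}$, a contradiction.

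Finally, by hypothesis $U \subset X_0 = X$ does not have immersive geometric tropicalization with respect to $U \hookrightarrow T$. Since each $X_{k+1} \to X_k$ is the blow-up of $X_k$ at a point of $X_k \setminus U$, applying Lemma~\ref{pointblowupsurface} successively shows that $U \subset X_k$ does not have immersive geometric tropicalization for every $k$; in particular $U \subset \widetilde{X} = X_N$ does not have immersive geometric tropicalization with respect to $U \hookrightarrow T$. The main obstacle is the first step: this is precisely where the hypotheses that $X$ is non-ruled and minimal are used, since for a general $X$ one only obtains a zig-zag of blow-ups and blow-downs linking $X$ and $\widetilde{X}$, and immersive geometric tropicalization is not preserved under such operations in general; the remaining steps are routine bookkeeping with simple normal crossing divisors under point blow-ups.
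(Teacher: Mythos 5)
Your argument is correct and follows essentially the same route as the paper: use uniqueness of the minimal model for non-ruled surfaces to produce a morphism $\pi:\widetilde{X}\to X$ that restricts to the identity on $U$ and factors as a sequence of point blow-ups, then apply Lemma \ref{pointblowupsurface} iteratively. The paper is in fact terser than you are on the bookkeeping (snc-ness of intermediate boundaries, centers lying in the boundary); your justification that the centers avoid $U$ via ``complete curves cannot map into $U$'' is slightly loose --- the clean statement is that $\pi|_U$ being the inclusion forces each $X_k\to X$ to be an isomorphism over $U$, since the induced section $U\to X_k$ over $U$ is a closed immersion with dense image in the preimage of $U$ --- but this is a routine repair.
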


\begin{proof}
By Lemma \ref{pointblowupsurface}, it suffices to show the existence of a map $\pi: \widetilde{X} \to X$ such that $\pi$ restricts to the identity on $U$ and such that $\pi$ is isomorphic to a sequence of point blow-ups.

There exists $\pi': \widetilde{X} \to X'$ such that $X'$ is a minimal surface and $\pi'$ is a sequence of point blow-ups. Let $U' \subset U$ be a nonempty open subvariety such that $\pi'|_{U'}: U' \to \pi'(U')$ is an isomorphism. Let $b: X' \darrow X$ be the birational map defined by the morphism $\pi'(U') \to X$ obtained by composing $(\pi'|_{U'})^{-1}: \pi'(U') \to U'$ with the inclusion $U' \subset X$. Then because $X'$ and $X$ are non-ruled minimal surfaces, $b$ is an isomorphism $X' \xrightarrow{\sim} X$. Set
\[
	\pi = b \circ \pi': \widetilde{X} \to X.
\]
Then $\pi$ restricts to the identity on $U$ and is isomorphic to a sequence of point blow-ups, and we are done.
\end{proof}

We now complete the proof of Theorem \ref{immersivegtrop}(\ref{surfacewithoutimmersivegtrop}).

\begin{proof}
Let $X'$ be a non-ruled minimal surface with Picard rank at least 2. For example, we can take $X'$ to be the Fermat Quartic. Set $m = 2$, $L = \Z$, and $\varphi: \Z^m \to L$ to be the map of lattices taking both standard basis vectors to $1 \in \Z$. Then $\varphi_\R|_{\R_{\geq 0}^m}: \R_{\geq 0}^m \to L_\R$ is proper as a map of topological spaces, so by Theorem \ref{gtropproper}(\ref{properisgtrop}), there exists an open subvariety $U \subset X'$ and a closed immersion $U \hookrightarrow T'$ into an algebraic torus such that $X' \setminus U$ is a simple normal crossing divisor and $U \subset X'$ does not have immersive geometric tropicalization with respect to $U \hookrightarrow T'$. Furthermore, because $X'$ has Picard rank at least 2, we can choose $U$ and $U \hookrightarrow T'$ so that $T'$ has character lattice $\cO_U(U)^\times / k^\times$ and $U \hookrightarrow T'$ is induced by a section $\cO_U(U)^\times / k^\times \to \cO_U(U)^\times$ of $\cO_U(U)^\times \to \cO_U(U)^\times / k^\times$.

Now let $U \hookrightarrow T$ be a closed immersion into an algebraic torus and $U \subset X$ be an open immersion into a smooth complete variety $X$ such that $X \setminus U$ is a simple normal crossing divisor. Then there exists a monomial morphism $T' \to T$ whose restriction to $U$ is the closed immersion $U \hookrightarrow T$. Therefore, $U \subset X$ does not have immersive geometric tropicalization with respect to $U \hookrightarrow T$ if $U \subset X$ does not have immersive geometric tropicalization with respect to $U \hookrightarrow T'$. But the latter holds by Lemma \ref{nonruledminimalsurface}.
\end{proof}

\subsection{Immersive Geometric Tropicalization of Non-Sch\"{o}n Varieties}

In this sub-section, we will prove Theorem \ref{immersivegtrop}(\ref{immersivenotschon}). First we recall a well known fact, which is straightforward to prove.

\begin{proposition}
\label{multiplicationfibers}

Let $T$ be an algebraic torus, let $U \hookrightarrow T$ be a closed subvariety, let $X(\Delta)$ be a $T$-toric variety, let $X$ be the closure of $U$ in $X(\Delta)$, and let
\[
	m: T \times X \to X(\Delta)
\]
be the multiplication map. If $Y$ is a torus orbit of $X(\Delta)$ and $y \in Y(k)$, then
\[
	m^{-1}(y) \cong \bG_m^{\dim T - \dim Y} \times (X \cap Y),
\]
where $X \cap Y$ is a scheme-theoretic intersection.
\end{proposition}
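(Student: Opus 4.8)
The plan is to identify the scheme-theoretic fiber $m^{-1}(y)$ with a scheme-theoretic preimage of $X\cap Y$ under an orbit map, and then to use the fact that the orbit map of a torus orbit splits off a free factor.

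Let $\sigma\in\Delta$ be the cone with $y$ lying in the orbit $Y=O_\sigma$, and let $a\colon T\to Y$ be the orbit map $t\mapsto t^{-1}\cdot y$, which is surjective onto $Y$. Regard $X\cap Y=X\times_{X(\Delta)}Y$ as a closed subscheme of $Y$ (it is closed in $Y$ because $X$ is closed in $X(\Delta)$), and let $a^{-1}(X\cap Y)=T\times_Y(X\cap Y)$ be the corresponding closed subscheme of $T$. The first step is to exhibit an isomorphism $m^{-1}(y)\cong a^{-1}(X\cap Y)$. In one direction, the morphism $t\mapsto(t,t^{-1}\cdot y)$ carries $a^{-1}(X\cap Y)$ into $T\times X$ and has constant composite $y$ with $m$, so it factors through $m^{-1}(y)$. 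In the other direction, write $p_1\colon m^{-1}(y)\to T$ and $p_2\colon m^{-1}(y)\to X$ for the restrictions of the two projections of $T\times X$; the torus action axioms applied to the identity $p_1\cdot p_2=y$ give $p_2=a\circ p_1$ as morphisms to $X(\Delta)$, and since $p_2$ also factors through the closed immersion $X\hookrightarrow X(\Delta)$, the morphism $a\circ p_1$ factors through $X\cap Y\hookrightarrow Y$; hence $p_1$ factors through $a^{-1}(X\cap Y)\hookrightarrow T$. These two morphisms are visibly mutually inverse. (Alternatively, one can obtain the same identification in one stroke by transporting along the automorphism $(t,z)\mapsto(t,t\cdot z)$ of $T\times X(\Delta)$, which carries the closed subscheme $T\times X$ onto $\{(t,z):t^{-1}z\in X\}$ and carries $m$ to the second projection.)

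The second step is the standard description of a torus orbit. Writing $Y=O_\sigma\cong T/T_\sigma$, the short exact sequence $1\to T_\sigma\to T\to Y\to 1$ corresponds on character lattices to $0\to M\cap\sigma^{\perp}\to M\to M/(M\cap\sigma^{\perp})\to 0$, and $M\cap\sigma^{\perp}$ is a saturated sublattice of $M$, so the quotient is free and the sequence splits. A choice of splitting gives an isomorphism $T\cong Y\times T_\sigma$ under which $a$ becomes the composition of the projection onto $Y$ with an automorphism of $Y$, while $T_\sigma\cong\bG_m^{\,r}$ with $r=\dim T-\dim Y$. Pulling the closed subscheme $X\cap Y\hookrightarrow Y$ back along this projection therefore gives $a^{-1}(X\cap Y)\cong(X\cap Y)\times\bG_m^{\,r}$, and combining with the first step yields the asserted isomorphism $m^{-1}(y)\cong\bG_m^{\dim T-\dim Y}\times(X\cap Y)$.

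The step I expect to be the main obstacle is the scheme-theoretic bookkeeping in the first step: one must keep track that $m^{-1}(y)$, the intersection $X\cap Y$, and the preimage $a^{-1}(X\cap Y)$ are all taken scheme-theoretically, and that the identity $p_2=a\circ p_1$ holds as an equality of morphisms rather than merely on points, so that the two constructed morphisms are genuinely inverse as maps of schemes. Once this is in place, the toric computation in the second step is routine.
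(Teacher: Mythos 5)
Your proof is correct. The paper offers no argument for this proposition at all—it is introduced with ``we recall a well known fact, which is straightforward to prove''—so there is nothing to compare against; your two steps (identifying $m^{-1}(y)$ with the scheme-theoretic preimage $a^{-1}(X\cap Y)$ under the orbit map, then splitting $T\cong Y\times T_\sigma$ using saturation of $\sigma^\perp\cap M$ in $M$ so that $a$ becomes a projection) constitute the standard argument the author had in mind, and your care with the functor-of-points/morphism-level identities is exactly what makes the isomorphism scheme-theoretic rather than merely set-theoretic.
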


Let $X$ be a smooth projective variety of dimension at least 2. Let $\cL$ be a very ample line bundle on $X$. By Bertini's theorem, there exists $f \in \HH^0(\cL)$ such that $\divr f = D$ is smooth and irreducible. Set
\[
	\cE = \cL^{\otimes 2}, \qquad s= f^{\otimes 2} \in \HH^0(\cE), \qquad r = \dim \HH^0(\cE) -1.
\]
By Bertini's theorem, there exists $(s^{(\ell)})_{\ell \in \{1, \dots, r\}}$ such that
\begin{itemize}

\item each $\divr s^{(\ell)} = E^{(\ell)}$ is smooth and irreducible,

\item $D+E^{(1)} + \dots + E^{(r)}$ is a simple normal crossing divisor,

\item and $s, s^{(1)}, \dots, s^{(r)}$ is a basis for $\HH^0(\cE)$.

\end{itemize}

Let $x, x^{(1)}, \dots, x^{(r)}$ be homogeneous coordinates for $\bP^r$ with corresponding hyperplanes $H, H^{(1)}, \dots, H^{(r)}$ and let $X \hookrightarrow \bP^r$ be the closed immersion such that $x, x^{(1)}, \dots x^{(r)}$ pull back to $s, s^{(1)}, \dots, s^{(r)}$, respectively. Set
\[
	T = \bP^r \setminus \left( H \cup H^{(1)} \cup \dots \cup H^{(r)} \right)
\]
and
\[
	U = X \cap T \subset X.
\]
Let $M$ be the character lattice of the algebraic torus $T$, and let $N = M^\vee$. Let $w, w^{(1)}, \dots, w^{(r)} \in N$ be the first lattice points of the rays corresponding to the hyperplanes $H, H^{(1)}, \dots, H^{(r)}$, respectively.

\begin{lemma}
\label{immersivenotschonboundary}

\begin{enumerate}[(a)]

\item \label{immersivenotschonboundaryintersection} We have the scheme theoretic intersections
\[
	X \cap H = \divr s = 2D
\]
and for each $\ell \in \{1, \dots, r\}$,
\[
	X \cap H^{(\ell)} = \divr s^{(\ell)} = E^{(\ell)}.
\]

\item \label{immersivenotschonboundaryval} We have
\[
	\val_D|_M = 2w,
\]
and for each $\ell \in \{1, \dots, r\}$,
\[
	\val_{E^{(\ell)}}|_M = w^{(\ell)}.
\]

\end{enumerate}
\end{lemma}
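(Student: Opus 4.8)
The plan is to derive both parts directly from the construction of the closed immersion $X \hookrightarrow \bP^r$, exactly in the spirit of the proof of Lemma \ref{boundaryintersection}. The point to bear in mind is that $X \hookrightarrow \bP^r$ is the immersion attached to the basis $s, s^{(1)}, \dots, s^{(r)}$ of $\HH^0(\cE)$, with $\cE = \cL^{\otimes 2}$, so that $\cO_{\bP^r}(1)$ pulls back to $\cE$ on $X$ and each coordinate hyperplane cuts out on $X$ the vanishing scheme of the corresponding section.

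For part (\ref{immersivenotschonboundaryintersection}) I would just unwind this. The hyperplane $H = V(x)$ meets $X$ in the zero scheme of $s = f^{\otimes 2}$; since $D = \divr f$ is smooth, this zero scheme is the Cartier divisor $\divr(f^{\otimes 2}) = 2D$. Similarly $H^{(\ell)} = V(x^{(\ell)})$ meets $X$ in the zero scheme of $s^{(\ell)}$, which by construction is the smooth irreducible divisor $E^{(\ell)} = \divr s^{(\ell)}$. For part (\ref{immersivenotschonboundaryval}) I would fix $u \in M$, regard $\chi^u$ as a rational function on $\bP^r$, and invoke the standard toric identity
\[
	\divr(\chi^u) = \langle u, w\rangle\, H + \sum_{\ell=1}^r \langle u, w^{(\ell)}\rangle\, H^{(\ell)},
\]
which holds because $w, w^{(1)}, \dots, w^{(r)}$ are the primitive generators of the rays of the fan defining $\bP^r$ as a $T$-toric variety and $H, H^{(1)}, \dots, H^{(r)}$ are the associated torus-invariant prime divisors. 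Restricting this Cartier divisor to $X$ and substituting the scheme-theoretic intersections found in part (\ref{immersivenotschonboundaryintersection}) yields
\[
	\divr(\chi^u|_X) = 2\langle u, w\rangle\, D + \sum_{\ell=1}^r \langle u, w^{(\ell)}\rangle\, E^{(\ell)}.
\]
Since $D, E^{(1)}, \dots, E^{(r)}$ are distinct prime divisors on the smooth variety $X$, comparing the coefficient of $D$ and of each $E^{(\ell)}$ gives $\langle u, \val_D|_M\rangle = \langle u, 2w\rangle$ and $\langle u, \val_{E^{(\ell)}}|_M\rangle = \langle u, w^{(\ell)}\rangle$ for every $u \in M$, i.e. $\val_D|_M = 2w$ and $\val_{E^{(\ell)}}|_M = w^{(\ell)}$.

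I do not expect any real obstacle; the argument is bookkeeping. The only step I would take care to justify is that $\chi^u|_X$ is a well-defined nonzero rational function on $X$ --- equivalently, that $X$ is contained in none of $H, H^{(1)}, \dots, H^{(r)}$. This is immediate: $X \hookrightarrow \bP^r$ is a closed immersion cut out by base-point-free sections, so $s, s^{(1)}, \dots, s^{(r)}$ have no common zero on $X$, and moreover each $E^{(\ell)} = \divr s^{(\ell)}$ is a genuine divisor on $X$, so no individual coordinate vanishes identically there. This is what legitimizes restricting Cartier divisors from $\bP^r$ to $X$ and reading off orders of vanishing along the boundary components of $U \subset X$.
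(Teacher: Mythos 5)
Your proof is correct and follows the same route as the paper, which simply states that the lemma follows from the construction of $X \hookrightarrow \bP^r$; you have filled in the bookkeeping (pulling back coordinate hyperplanes to the zero schemes of $s = f^{\otimes 2}$ and the $s^{(\ell)}$, and restricting the toric identity for $\divr(\chi^u)$ to $X$) exactly as intended.
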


\begin{proof}
This follows from the construction of the map $X \hookrightarrow \bP^r$.
\end{proof}

\begin{proposition}
$X \setminus U$ is a simple normal crossing divisor and $U \subset X$ has immersive geometric tropicalization with respect to $U \hookrightarrow T$.
\end{proposition}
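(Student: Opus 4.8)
The plan is to deduce both assertions directly from Lemma \ref{immersivenotschonboundary} together with the elementary combinatorics of the rays of $\bP^r$, working throughout with the maps $\varphi^\cS_\R$ as in Definition \ref{has_gtrop}.

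For the simple normal crossing property: since $U = X \cap T$ with $T = \bP^r \setminus (H \cup H^{(1)} \cup \dots \cup H^{(r)})$, we have $X \setminus U = X \cap (H \cup \bigcup_{\ell} H^{(\ell)})$, and Lemma \ref{immersivenotschonboundary}(\ref{immersivenotschonboundaryintersection}) gives, at the level of underlying sets, $X \cap H = D$ and $X \cap H^{(\ell)} = E^{(\ell)}$. Hence $X \setminus U = D \cup E^{(1)} \cup \dots \cup E^{(r)}$, which is a simple normal crossing divisor because $D + E^{(1)} + \dots + E^{(r)}$ was arranged to be one when the sections were chosen via Bertini's theorem. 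In particular $\cC(X \setminus U) = \{D, E^{(1)}, \dots, E^{(r)}\}$ has exactly $r+1$ elements.

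For immersivity, the step that carries the content is the observation that no point of $X$ lies on all of $D, E^{(1)}, \dots, E^{(r)}$ simultaneously: indeed $D \cap E^{(1)} \cap \dots \cap E^{(r)} \subseteq X \cap H \cap H^{(1)} \cap \dots \cap H^{(r)}$, and the intersection of all coordinate hyperplanes of $\bP^r$ is empty. Thus every $\cS \subseteq \cC(X \setminus U)$ with $\bigcap_{D' \in \cS} D' \neq \emptyset$ is a proper subset, so $|\cS| \le r$. It remains to show that for such $\cS$ the vectors $\{\val_{D'}|_M : D' \in \cS\}$ are linearly independent in $N_\R$, equivalently that $\varphi^\cS_\R \colon \R^\cS \to N_\R$ is injective. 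By Lemma \ref{immersivenotschonboundary}(\ref{immersivenotschonboundaryval}) these vectors form a subset of $\{2w, w^{(1)}, \dots, w^{(r)}\}$, with $2w$ occurring only if $D \in \cS$. Since $\bP^r$ is a smooth complete toric variety, $w, w^{(1)}, \dots, w^{(r)}$ are the primitive generators of the rays of its fan; they satisfy $w + w^{(1)} + \dots + w^{(r)} = 0$, and any $r$ of them form a $\Z$-basis of $N$. It follows that any at most $r$ of the vectors $2w, w^{(1)}, \dots, w^{(r)}$ are $\R$-linearly independent, the scalar $2$ being irrelevant; combined with $|\cS| \le r$, this yields injectivity of $\varphi^\cS_\R$ for every admissible $\cS$, which is exactly the statement that $U \subseteq X$ has immersive geometric tropicalization with respect to $U \hookrightarrow T$.

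I do not expect a genuine obstacle: the one place where the particular construction is used — and hence the step I would single out — is the emptiness of the stratum $D \cap E^{(1)} \cap \dots \cap E^{(r)}$, without which the source of $\varphi^{\cC(X \setminus U)}_\R$, namely $\R^{r+1}$, would be too large to embed into $N_\R \cong \R^r$. Everything else is a direct application of Lemma \ref{immersivenotschonboundary} and the standard fact that every proper subset of the primitive ray generators of $\bP^r$ is linearly independent.
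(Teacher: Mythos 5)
Your proof is correct and follows essentially the same route as the paper: the simple normal crossing statement comes from Lemma \ref{immersivenotschonboundary}(\ref{immersivenotschonboundaryintersection}), and immersivity from Lemma \ref{immersivenotschonboundary}(\ref{immersivenotschonboundaryval}) together with the fact that any strict subset of $\{w, w^{(1)}, \dots, w^{(r)}\}$ extends to a basis of $N$. The only difference is that you explicitly verify that $D \cap E^{(1)} \cap \dots \cap E^{(r)} = \emptyset$ (via the emptiness of the intersection of all coordinate hyperplanes of $\bP^r$), a point the paper leaves implicit; making it explicit is a reasonable refinement, not a deviation.
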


\begin{proof}
By Lemma \ref{immersivenotschonboundary}(\ref{immersivenotschonboundaryintersection}),
\[
	X \setminus U = D + E^{(1)} + \dots + E^{(r)},
\]
so it is a simple normal crossing divisor. The remainder of the proposition follows from Lemma \ref{immersivenotschonboundary}(\ref{immersivenotschonboundaryval}) and the fact that any strict subset of the lattice points $\{w, w^{(1)}, \dots, w^{(r)}\}$ forms part of a basis for $N$.
\end{proof}

\begin{proposition}
The map $M \to \cO_U(U)^\times/k^\times$ induced by $U \hookrightarrow T$ is an isomorphism.
\end{proposition}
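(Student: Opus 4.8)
The plan is to follow the pattern of Propositions \ref{properisgtropinjective} and \ref{properisgtropsurjective}, proving injectivity and surjectivity of $M \to \cO_U(U)^\times/k^\times$ separately, with Lemma \ref{immersivenotschonboundary} playing here the role that Lemma \ref{boundaryintersection} played there. The only genuinely new feature is the scheme-theoretic identity $X \cap H = 2D$, i.e.\ the factor of $2$ in Lemma \ref{immersivenotschonboundary}(\ref{immersivenotschonboundaryintersection}), and I expect the one place where this could cause trouble is in the surjectivity step.

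For injectivity, I would take $u \in M$ with $\chi^u|_U \in k^\times$ and regard $\chi^u$ as a rational function on $\bP^r$. Since $\Pic(\bP^r) \cong \Z$ is generated by the class of a hyperplane, I can write $\divr(\chi^u) = a_0 H + \sum_{\ell=1}^r a_\ell H^{(\ell)}$, and comparing degrees gives $a_0 + \sum_\ell a_\ell = 0$. Restricting to $X$ via Lemma \ref{immersivenotschonboundary}(\ref{immersivenotschonboundaryintersection}) yields $\divr(\chi^u|_X) = 2 a_0 D + \sum_\ell a_\ell E^{(\ell)}$, which vanishes because $\chi^u|_U \in k^\times$ forces $\chi^u$ to restrict to a nonzero constant on $X$. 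As $D, E^{(1)}, \dots, E^{(r)}$ are distinct prime divisors, all $a_\ell$ vanish and $2a_0 = 0$, so $\divr(\chi^u) = 0$, hence $\chi^u \in \cO_{\bP^r}(\bP^r)^\times = k^\times$ and $u = 0$.

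For surjectivity, given $g \in \cO_U(U)^\times$, I would write $\divr g = dD + \sum_{\ell=1}^r e^{(\ell)} E^{(\ell)}$ on $X$, using that $X \setminus U = D + E^{(1)} + \dots + E^{(r)}$. From $\cO_X(D) \cong \cL$ and $\cO_X(E^{(\ell)}) \cong \cE = \cL^{\otimes 2}$ one gets $\cL^{\otimes(d + 2\sum_\ell e^{(\ell)})} \cong \cO_X$, and since an ample line bundle on a projective variety of positive dimension has infinite order in the Picard group, this forces $d + 2\sum_\ell e^{(\ell)} = 0$. The key consequence is that $d$ is even, so $d/2 = -\sum_\ell e^{(\ell)}$ is an integer; this lets me form the degree-zero divisor $H' = \bigl(-\sum_\ell e^{(\ell)}\bigr)H + \sum_\ell e^{(\ell)} H^{(\ell)}$ on $\bP^r$, which is principal, say $H' = \divr(\chi^u)$ for some $u \in M$. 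Restricting through Lemma \ref{immersivenotschonboundary}(\ref{immersivenotschonboundaryintersection}) gives $\divr(\chi^u|_X) = 2\bigl(-\sum_\ell e^{(\ell)}\bigr)D + \sum_\ell e^{(\ell)} E^{(\ell)} = \divr g$, so $\chi^u|_X$ and $g$ differ by an element of $\cO_X(X)^\times = k^\times$ and therefore have the same class in $\cO_U(U)^\times/k^\times$.

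The main obstacle, as flagged, is the factor of $2$: it is what makes the parity of $d$ matter, and without the Picard-group relation $d = -2\sum_\ell e^{(\ell)}$ one could not in general produce an integral character solving the surjectivity problem; once that relation is in hand the factor of $2$ is harmless. A minor point to verify throughout is that restricting principal divisors from $\bP^r$ to $X$ is legitimate here --- no component of $X$ lies in any boundary hyperplane, since $U = X \cap T \neq \emptyset$, and each $X \cap H$, $X \cap H^{(\ell)}$ is an honest Cartier divisor on $X$ --- but this is exactly the setting already handled in Propositions \ref{properisgtropinjective} and \ref{properisgtropsurjective}.
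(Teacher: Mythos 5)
Your proof is correct and is essentially the paper's argument: the paper simply observes that this embedding $U \hookrightarrow T$ is the $m=1$, $L=\Z$, $\varphi=(\text{multiplication by }2)$ special case of the construction in the proof of Theorem \ref{gtropproper}(\ref{properisgtrop}) and cites Propositions \ref{properisgtropinjective} and \ref{properisgtropsurjective}, whereas you re-derive those two propositions directly in this special case. The details you spell out (the parity of $d$ forced by $\cL$ having infinite order in $\Pic(X)$, and the degree-zero principal divisor on $\bP^r$) are exactly the specializations of the cited arguments, so nothing is missing.
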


\begin{proof}
By setting $m = 1$, $L = \Z$, and $\varphi: \Z^m \to L$ to be multiplication by 2, we see that the construction of $U \hookrightarrow T$ is a special case of the construction used in the proof of Theorem \ref{gtropproper}(\ref{properisgtrop}). Thus the result follows from Propositions \ref{properisgtropinjective} and \ref{properisgtropsurjective}.
\end{proof}

The following proposition completes the proof of Theorem \ref{immersivegtrop}(\ref{immersivenotschon}).

\begin{proposition}
$U$ is not sch\"{o}n in $T$.
\end{proposition}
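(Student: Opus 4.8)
The plan is to produce a single fiber of the multiplication map $m\colon T\times X\to\bP^r$ that is non-reduced, and to conclude that $U$ violates Tevelev's smoothness criterion for sch\"{o}nness. Recall from \cite{Tevelev} that $U$ is sch\"{o}n in $T$ if and only if all of its initial degenerations are smooth; equivalently, if and only if for every $T$-toric variety the scheme-theoretic intersection of the closure of $U$ with each torus orbit is smooth. By Proposition \ref{multiplicationfibers}, applied with $\bP^r$ regarded as a $T$-toric variety with dense torus $T$ and $X$ the closure of $U$, this last condition is exactly the smoothness of every fiber of $m$; so it suffices to exhibit a non-smooth fiber of $m$.

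I would take $Y=H\setminus(H^{(1)}\cup\dots\cup H^{(r)})$, the codimension-one torus orbit of $\bP^r$ corresponding to the ray with primitive generator $w$. For $y\in Y(k)$, Proposition \ref{multiplicationfibers} gives $m^{-1}(y)\cong\bG_m\times(X\cap Y)$, where $X\cap Y$ is the open subscheme of the scheme-theoretic intersection $X\cap H$ lying over $Y$. Now $X\cap H=\divr s=2D$ by Lemma \ref{immersivenotschonboundary}(\ref{immersivenotschonboundaryintersection}), and since $X\cap H^{(\ell)}=E^{(\ell)}$ for each $\ell$, deleting the hyperplanes $H^{(\ell)}$ from $H$ yields $X\cap Y=2\bigl(D\setminus(E^{(1)}\cup\dots\cup E^{(r)})\bigr)$ as a scheme.

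It then remains to check that $D\setminus(E^{(1)}\cup\dots\cup E^{(r)})$ is nonempty, so that $X\cap Y$ is a nonempty non-reduced scheme, hence not smooth, and therefore $m^{-1}(y)\cong\bG_m\times(X\cap Y)$ is not smooth. For this I would use that $D$ is irreducible (by the Bertini choice) and nonempty (being the zero locus of a nonzero section of the very ample line bundle $\cL$ on the positive-dimensional projective variety $X$), while $D,E^{(1)},\dots,E^{(r)}$ are the pairwise distinct irreducible components of the simple normal crossing divisor $X\setminus U$; hence $D$ lies in no $E^{(\ell)}$ and $D\setminus\bigcup_\ell E^{(\ell)}$ is dense open in $D$. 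This exhibits the desired non-smooth fiber of $m$, so $U$ is not sch\"{o}n in $T$. I expect the only point requiring care to be the first step: pinning down the precise form of Tevelev's criterion that lets one pass from sch\"{o}nness of $U$ to smoothness of all orbit intersections of $X$ inside the (possibly non-minimal) compactification $X\subset\bP^r$; once that is granted, the rest is immediate from Proposition \ref{multiplicationfibers} and Lemma \ref{immersivenotschonboundary}.
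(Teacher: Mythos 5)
Your computation of the bad fiber is exactly the paper's: the orbit $Y$ over the ray through $w$, the non-reduced intersection $X\cap H=2D$ from Lemma \ref{immersivenotschonboundary}(\ref{immersivenotschonboundaryintersection}), and Proposition \ref{multiplicationfibers}. The gap is the step you yourself flag as "requiring care," and it is a real one, not a formality. Tevelev's criterion is not that every orbit intersection of the closure of $U$ in \emph{every} $T$-toric variety is smooth; it is that the multiplication map $m\colon T\times\overline{U}\to X(\Delta)$ is smooth for one (equivalently, every) \emph{tropical} compactification, i.e.\ one for which $m$ is already flat and surjective. For $\bP^r$ with its full fan, $m$ is certainly not surjective (its image misses the orbits over cones not meeting $\Trop(U)$), so $X\subset\bP^r$ is not a tropical compactification, and a non-smooth fiber of $T\times X\to\bP^r$ does not by itself contradict sch\"{o}nness. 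The implication you would need — sch\"{o}n $\Rightarrow$ all nonempty orbit intersections in all toric compactifications are smooth — is exactly what is not available off the shelf, because orbit intersections in non-tropical compactifications need not compute initial degenerations.

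The paper closes this gap by restricting to the subfan $\Delta=\{\sigma_\cS \mid \bigcap_{E\in\cS}E\neq\emptyset\}$ of the fan of $\bP^r$ and verifying that $X\subset X(\Delta)$ is a tropical compactification: using Proposition \ref{multiplicationfibers} together with the simple normal crossing structure of $X\setminus U$, every fiber of $m\colon T\times X\to X(\Delta)$ is nonempty and equidimensional, whence $m$ is flat and surjective by smoothness of source and target. Only then does the non-reduced fiber over your orbit $Y$ (which is an orbit of $X(\Delta)$, since $\pos(\val_D|_M)=\pos(w)\in\Delta$) contradict sch\"{o}nness. To repair your argument you must either carry out this verification, or replace your criterion with the initial-degeneration form (all $\mathrm{in}_v U$ smooth for $v\in\Trop(U)$) together with a justification that $m^{-1}(y)\cong\bG_m\times(X\cap Y)$ is, up to the torus factor, the initial degeneration $\mathrm{in}_w U$; the rest of your proof (irreducibility of $D$, $D\not\subset E^{(\ell)}$, hence $X\cap Y$ nonempty and non-reduced) is correct as written.
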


\begin{proof}
Let $\cC(X \setminus U)$ be the set of irreducible components of $X \setminus U$. For each collection $\cS \subset \cC(X \setminus U)$, set
\[
	\sigma_\cS = \pos(\val_E|_M \, | \, E \in \cS) \subset N_\R,
\]
and set
\[
	\Delta = \{\sigma_\cS \, | \, \cS \subset \cC(X \setminus U), \bigcap_{E \in \cS} E \neq \emptyset\}.
\]
Then by Lemma \ref{immersivenotschonboundary}(\ref{immersivenotschonboundaryval}), $\Delta$ is a sub-fan of the unimodular fan defining $\bP^r$ as a $T$-toric variety. Let $X(\Delta) \subset \bP^r$ be the $T$-toric variety defined by $\Delta$.

We first show that $\Delta$ is a tropical fan for $U$ giving $X \subset X(\Delta)$ as a tropical compactification of $U$. By construction, $X$ is the closure of $U$ in $X(\Delta)$. Now consider the multiplication map
\[
	m: T \times X \to X(\Delta).
\]
We need to show that $m$ is flat and surjective. By construction of $\Delta$, the fact that $D+E^{(1)}+\dots+E^{(r)}$ is a simple normal crossing divisor, and Lemma \ref{immersivenotschonboundary}(\ref{immersivenotschonboundaryintersection}) and Proposition \ref{multiplicationfibers}, the fibers of $m$ are nonempty and equidimensional. Thus because $T \times X$ and $X(\Delta)$ are smooth, this implies that $m$ is flat and surjective.

But again by Lemma \ref{immersivenotschonboundary}(\ref{immersivenotschonboundaryintersection}) and Proposition \ref{multiplicationfibers}, $m$ has a non-reduced fiber and thus is not smooth. Therefore $U$ is not sch\"{o}n in $T$.
\end{proof}

\bibliography{MVFT}{}
\bibliographystyle{plain}

\end{document}